
\documentclass{amsart}
\usepackage{graphicx}
\usepackage{amssymb}
\usepackage{amsfonts}
\setlength{\footskip}{1cm}
\swapnumbers
\sloppy
\vfuzz2pt 
\hfuzz2pt 
\newtheorem{theorem}{Theorem}[section]
\newtheorem{lemma}[theorem]{Lemma}

\newtheorem{proposition}[theorem]{Proposition}
\theoremstyle{definition}

\newtheorem{remark}[theorem]{Remark}

\numberwithin{equation}{section}
 \theoremstyle{plain}
 
 \numberwithin{equation}{section} 
 \numberwithin{figure}{section} 
 \theoremstyle{plain}
 \theoremstyle{remark}
 \newtheorem*{acknowledgement*}{Acknowledgement}

\newcommand{\cB}{{\mathcal B}}
\newcommand{\cC}{{\mathcal C}}
\newcommand{\cD}{{\mathcal D}}

\newcommand{\cF}{{\mathcal F}}
\newcommand{\cG}{{\mathcal G}}
\newcommand{\cH}{{\mathcal H}}
\newcommand{\cI}{{\mathcal I}}

\newcommand{\cP}{{\mathcal P}}

\newcommand{\cW}{{\mathcal W}}
\newcommand{\cX}{{\mathcal X}}


\newcommand{\fW}{{\mathfrak W}}

\newcommand{\vt}{{\vartheta}}
\newcommand{\Om}{{\Omega}}
\newcommand{\om}{{\omega}}
\newcommand{\ve}{{\varepsilon}}
\newcommand{\del}{{\delta}}

\newcommand{\gam}{{\gamma}}
\newcommand{\Gam}{{\Gamma}}

\newcommand{\vr}{{\varrho}}
\newcommand{\Sig}{{\Sigma}}
\newcommand{\sig}{{\sigma}}
\newcommand{\al}{{\alpha}}
\newcommand{\be}{{\beta}}
\newcommand{\ka}{{\kappa}}

\newcommand{\vp}{{\varpi}}
\newcommand{\up}{{\upsilon}}

\newcommand{\vs}{{\varsigma}}


\newcommand{\bbC}{{\mathbb C}}

\newcommand{\bbF}{{\mathbb F}}

\newcommand{\bbK}{{\mathbb K}}

\newcommand{\bbR}{{\mathbb R}}
\newcommand{\bbS}{{\mathbb S}}

\newcommand{\bbZ}{{\mathbb Z}}
\newcommand{\bbI}{{\mathbb I}}
\newcommand{\bbW}{{\mathbb W}}
\newcommand{\bbU}{{\mathbb U}}
\newcommand{\bbX}{{\mathbb X}}
\newcommand{\bbV}{{\mathbb V}}

\newcommand{\bfM}{{\bf M}}

\newcommand{\bfW}{{\bf W}}

\begin{document}
\title[]{Almost sure approximations and laws of iterated logarithm for signatures}%
 \vskip 0.1cm
 \author{ Yuri Kifer\\
\vskip 0.1cm
 Institute  of Mathematics\\
Hebrew University\\
Jerusalem, Israel}%
\address{
Institute of Mathematics, The Hebrew University, Jerusalem 91904, Israel}
\email{ kifer@math.huji.ac.il}%

\thanks{ }
\subjclass[2000]{Primary:  60F15 Secondary: 60L20, 37A50}%
\keywords{rough paths, signatures, strong approximations, Berry-Esseen estimates,
$\al$, $\phi$- and $\psi$-mixing, stationary process, shifts, dynamical systems.}%
\dedicatory{  }
 \date{\today}
\begin{abstract}\noindent
We obtain strong invariance principles for normalized multiple iterated sums and integrals
of the form $\bbS_N^{(\nu)}(t)=N^{-\nu/2}\sum_{0\leq k_1<...<k_\nu\leq Nt}\xi(k_1)\otimes\cdots\otimes\xi(k_\nu)$,
$t\in[0,T]$ and $\bbS_N^{(\nu)}(t)=N^{-\nu/2}\int_{0\leq s_1\leq...\leq s_\nu\leq Nt}\xi(s_1)\otimes\cdots\otimes\xi(s_\nu)ds_1\cdots ds_\nu$, where $\{\xi(k)\}_{-\infty<k<\infty}$ and $\{\xi(s)\}_{-\infty<s<\infty}$ are centered stationary vector processes with some weak
dependence properties. These imply also laws of iterated logarithm and an almost sure central limit theorem
for such objects. In the continuous time we work both under direct weak dependence
assumptions and also within the suspension setup which is more appropriate for applications in dynamical systems. Similar results under substantially
more restricted conditions were obtained in
\cite{FK} relying heavily on rough paths theory and notations while here we obtain these results in a more direct way which makes them accessible to
a wider readership. This is a companion paper of \cite{Ki23} and we consider a similar setup and rely on many result from there.
\end{abstract}
\maketitle
\markboth{Yu.Kifer}{Almost sure approximations for signatures}
\renewcommand{\theequation}{\arabic{section}.\arabic{equation}}
\pagenumbering{arabic}

\section{Introduction}\label{sec1}\setcounter{equation}{0}

Let $\{\xi(k)\}_{-\infty<k<\infty}$ and $\{\xi(t)\}_{-\infty<t<\infty}$ be discrete and continuous time $d$-dimensional
stationary processes such that for $s=0$ (and so for all $s$),
\begin{equation}\label{1.1}
E\xi(s)=0.
\end{equation}
The sequences of multiple iterated sums
\[
\Sig^{(\nu)}(v)=\sum_{0\leq k_1<...<k_\nu< [v]}\xi(k_1)\otimes\cdots\otimes\xi(k_\nu),
\]
in the discrete time, and of multiple iterated integrals
\[
\Sig^{(\nu)}(v)=\int_{0\leq u_1<...<u_\nu\leq v}\xi(u_1)\otimes\cdots\otimes\xi(u_\nu)du_1\cdots du_\nu,
\]
in the continuous time, were called signatures in recent papers related to the rough paths theory, data sciences and machine
learning (see, for instance, \cite{HL}, \cite{DR}, \cite{DET} and references there). Observe that for $\nu=1$ we have above usual sums and integrals.

In this paper we will study almost sure (a.s.) approximations for the normalized iterated sums and integrals $\bbS_N^{(\nu)}(t)=
N^{-\nu/2}\Sig^{(\nu)}(Nt),\, t\in[0,T]$. Under certain weak dependence conditions on the process $\xi$ it was shown
in \cite{FK} that there exists a Brownian motion with covariances $\cW$ such that the $\nu$-th term of the, so called,
Lyons' extension $\bbW_N^{(\nu)}$ constructed recursively (see (\ref{rec2}) in Section \ref{sec2})
with the rescaled Brownian motion $W_N(s)=N^{-1/2}\cW(Ns)$ and a certain drift term satisfy
\begin{equation}\label{1.2}
\sup_{0\leq t\leq T}|\bbS_N^{(\nu)}(t)-\bbW^{(\nu)}_N(t)|=O(N^{-\ve})\,\,\,\mbox{almost surely (a.s.)}
\end{equation}
for some $\ve>0$ which does not depend on $N$ and, in fact, this was proved in \cite{FK} for the $p$-variational and not just for the supremum norm. This is the strong (or a.s.) invariance principle for iterated sums and integrals and it was proved in \cite{FK} under boundedness and $\phi$-mixing conditions on the process
$\xi$ relying heavily on the results and notations of the rough paths theory. In this paper we will provide a more direct proof of such results under more general
moment and mixing (weak dependence) conditions which will make these results more accessible for a general probability readership. Relying on (\ref{1.2}) and the method from \cite{LP} we will derive also the, so called, almost sure central
limit theorem for iterated sums and integrals $\bbS_N^{(\nu)}$.

We will work under the setup of
our companion paper \cite{Ki23} where moment estimates for the $p$-variational distance between $\bbS_N^{(\nu)}$ and $\bbW^{(\nu)}_N$ were obtained
and will rely here on some of the results from there. Still, this paper can be read independently of \cite{Ki23} turning to the latter from time to time for
more details as we provide here all necessary setups and statements.
Of course, both the results from \cite{Ki23} and from here imply, in particular,
 the weak convergence of distributions of $\bbS_N^{(\nu)}$ to the distribution of $\bbW_N^{(\nu)}$ (which does not depend on $N$). The latter does not seem to be stated before \cite{Ki23} in the full generality though for $\nu=1$ and $\nu=2$ this was proved before for certain classes of processes ( see, for instance,
\cite{CFKMZ} and references there).

In the continuous time case we consider two setups. The first one is standard in probability when we impose
mixing and approximation conditions with respect to a family of $\sig$-algebras indexed by two continuous
time parameters on the same
probability space on which our continuous time process $\xi$ is defined. Since this setup does not have many
applications to processes generated by continuous time dynamical systems (flows) we consider also another setup,
called suspension, when mixing and approximation conditions are imposed on a discrete time process defined on
on a base probability space and the continuous time process $\xi$ moves deterministically for the time determined by certain ceiling function and then jumps to the base according to the above discrete time process. This construction
is adapted to applications for certain important classes of dynamical systems, i.e. when $\xi(n)=g\circ F^n$ or
 $\xi(t)=g\circ F^t$ where $F$ is a measure preserving transformation or a continuous time measure preserving flow and
 $g$ is a (vector) function. Unlike \cite{FK} and several other related papers we work under quite general dependence (mixing) and
 moment conditions and not under specific $\alpha$- or $\phi$-mixing assumptions.

 The above a.s. approximations results for the normalized iterated sums and integrals enable us to obtain laws of iterated logarithm
 for them employing corresponding results on such laws for iterated stochastic integrals from \cite{Bal}. Similar results under more
 restricted conditions were obtained in \cite{FK} but the arguments there are rather sketchy and they rely heavily on the rough paths theory,
 so our more direct approach should benefit a general probability reader.
 
 We observe that the iterated sums and integrals $\Sig^{(\nu)}$ defined above have a visual resemblance with $U$-statistics having special (degenerate) 
 product type kernels. The kernels of $U$-statistics are usually supposed to be symmetric (see, for instance  \cite{DK}, \cite{DP}, \cite{KY} and references
 there) while in our case for $x,y\in\bbR^d$ with $d>1$, in general, $h_{ij}(x,y)=x_iy_j\ne y_ix_j=h_{ij}(x,y)$. Only when $x_1,x_2,...,x_\nu$ are one dimensional
 $h(x_1,...,x_\nu)=x_1x_2\cdots x_\nu$ becomes a symmetric degenerate kernel of an $U$-statistics. Furthermore, $U$-statistics are usually the one dimensional objects and
 so are limit theorems for them while we obtain limit theorems for the whole tensor products defined above viewed as $d^\nu$-dimensional objects and such results
 cannot be obtained relying on limit theorems for each of their components separately.
Most of the papers on $U$-statistics having degenerated kernels
 deal with weak limit theorems for them (see \cite{BV} and references there) with representations of limit processes somewhat different from ours. For $U$-statistics
 with bivariate degenerate kernels almost sure approximation type limit theorems were obtained in \cite{DDP} (with independent random variables) and in \cite{KY}
 (with weakly dependent stationary sequences). Our motivation and methods are different from papers on $U$-statistics and we obtain estimates in stronger variational 
 norms. In order to extend our estimates from sums and integrals to multiple iterated sums and integrals we rely on Chen relations (see Theorem \ref{hoelder} and
  Proposition \ref{hoelder2}) which hold true for iterated sums and integrals but not for general $U$-statistics. Consequently, the limit theorems we obtain in this 
  paper are of different types than in the works of $U$-statistics.

 The structure of this paper is the following. In the next section we provide necessary definitions and give
   precise statements of our results. Section \ref{sec3} is devoted to necessary estimates both of general nature
   and more specific to our problem as well as characteristic functions approximations needed
    for the strong approximation theorem which concludes that section. In Section \ref{sec4} we obtain
    main estimates for iterated sums with respect to the variational norms while Section \ref{sec5} is devoted to the
    "straightforward" and suspension continuous time setups. In Section \ref{sec6} we extend the results to multiple iterated sums and integrals relying on results from \cite{Ki23}. In Section \ref{sec7} we derive laws of iterated logarithm
     for multiple iterated sums and integrals and in Section \ref{sec8} we prove our almost sure central limit theorem
     for these objects.

\section{Preliminaries and main results}\label{sec2}\setcounter{equation}{0}
\subsection{Discrete time case}\label{subsec2.1}
We start with the discrete time setup which consists of a complete probability space
$(\Om,\cF,P)$, a stationary sequence of $d$-dimensional centered random vectors $\xi(n)=(\xi_1(n),...,\xi_d(n))$,
 $-\infty<n<\infty$, and a two-parameter family of countably generated $\sig$-algebras
$\cF_{m,n}\subset\cF,\,-\infty\leq m\leq n\leq\infty$ such that
$\cF_{mn}\subset\cF_{m'n'}\subset\cF$ if $m'\leq m\leq n
\leq n'$, where $\cF_{m\infty}=\cup_{n:\, n\geq m}\cF_{mn}$ and $\cF_{-\infty n}=\cup_{m:\, m\leq n}\cF_{mn}$.
It is often convenient to measure the dependence between two sub
$\sig$-algebras $\cG,\cH\subset\cF$ via the quantities
\begin{equation}\label{2.1}
\varpi_{b,a}(\cG,\cH)=\sup\{\| E(g|\cG)-Eg\|_a:\, g\,\,\mbox{is}\,\,
\cH-\mbox{measurable and}\,\,\| g\|_b\leq 1\},
\end{equation}
where the supremum is taken over real-valued functions and $\|\cdot\|_c$ is the
$L^c(\Om,\cF,P)$-norm. Then more familiar $\al,\rho,\phi$ and $\psi$-mixing
(dependence) coefficients can be expressed via the formulas (see \cite{Bra},
Ch. 4 ),
\begin{eqnarray*}
&\al(\cG,\cH)=\frac 14\varpi_{\infty,1}(\cG,\cH),\,\,\rho(\cG,\cH)=\varpi_{2,2}
(\cG,\cH)\\
&\phi(\cG,\cH)=\frac 12\varpi_{\infty,\infty}(\cG,\cH)\,\,\mbox{and}\,\,
\psi(\cG,\cH)=\varpi_{1,\infty}(\cG,\cH).
\end{eqnarray*}
We set also
\begin{equation}\label{2.2}
\varpi_{b,a}(n)=\sup_{k\geq 0}\varpi_{b,a}(\cF_{-\infty,k},\cF_{k+n,\infty})
\end{equation}
and accordingly
\[
\al(n)=\frac{1}{4}\varpi_{\infty,1}(n),\,\rho(n)=\varpi_{2,2}(n),\,
\phi(n)=\frac 12\varpi_{\infty,\infty}(n),\, \psi(n)=\varpi_{1,\infty}(n).
\]
Our setup includes also the  approximation rate
\begin{equation}\label{2.3}
\beta (a,l)=\sup_{k\geq 0}\|\xi(k)-E(\xi(k)|\cF_{k-l,k+l})\|_a,\,\, a\geq 1.
\end{equation}
We will assume that for some $ 1\leq L\leq\infty$, $M\geq 1$ large enough (whose value will be specified in Sections \ref{sec4}--\ref{sec6})
 and $K=\max(2L,4M)$,
\begin{equation}\label{2.4}
E|\xi(0)|^{K}<\infty,\,\,\,\mbox{and}\,\,\,\sum_{l=1}^\infty l(\sqrt{\sup_{m\geq l}\be(K,m)}+\varpi_{L,4M}(l))<\infty.
\end{equation}

In order to formulate our results we have to introduce also the "increments" of multiple iterated sums under consideration
\begin{equation}\label{2.5}
\Sig^{(\nu)}(u,v)=\sum_{[u]\leq k_1<...<k_\nu< [v]}\xi(k_1)\otimes\cdots\otimes\xi(k_\nu),\, 0\leq u\leq v
\end{equation}
which means that $\Sig^{(\nu)}(u,v)=\{\Sig^{i_1,...,i_\nu}(u,v),\, 1\leq i_1,...,i_\nu\leq d\}$ where
in the coordinate-wise form
\[
\Sig^{i_1,...,i_\nu}(u,v)=\sum_{[u]\leq k_1<...<k_\nu< [v]}\xi_{i_1}(k_1)\cdots\xi_{i_\nu}(k_\nu).
\]
We set also for any $0\leq s\leq t\leq T$,
\[
\bbS_N^{(\nu)}(s,t)=N^{-\nu/2}\Sig^{(\nu)}(sN,tN),\,\,\bbS_N(s,t)=\bbS_N^{(2)}(s,t)\quad\mbox{and}\quad S_N(s,t)=\bbS_N^{(1)}(s,t).
\]
Recall, that when $u=s=0$ we just write
\[
\Sig^{(\nu)}(v)=\Sig^{(\nu)}(0,v),\,\bbS_N^{(\nu)}(t)=\bbS_N^{(\nu)}(0,t),\,\bbS_N(t)=\bbS_N(0,t)\,\,\mbox{and}\,\,
S_N(t)=S_N(0,t).
\]

Next, introduce also the covariance matrix $\vs=(\vs_{ij})$ defined by
\begin{equation}\label{2.6}
\vs_{ij}=\lim_{k\to\infty}\frac 1k\sum_{m=0}^k\sum_{n=0}^k\vs_{ij}(n-m),\,\,
\mbox{where}\,\, \vs_{ij}(n-m)=E(\xi_i(m)\xi_j(n))
\end{equation}
taking into account that the limit here exists under conditions of our theorem below (see Section \ref{sec3}).
Let $\cW=(\cW^1,\cW^2,...,\cW^d)$ be a $d$-dimensional Brownian motion with the covariance matrix $\vs$ (at the time 1) and introduce
the rescaled Brownian motion $\bbW^{(1)}_N(t)=W_N(t)=N^{-1/2}\cW(Nt),\,  t\in[0,T],\, N\geq 1$. We set also
$\bbW_N^{(1)}(s,t)=W_N(s,t)=W_N(t)-W_N(s),\, t\geq s\geq 0$. Next, we introduce
\begin{equation}\label{rec1}
\bbW_N^{(2)}(s,t)=\bbW_N(s,t)=\int_s^tW_N(s,v)\otimes dW_N(v)+(t-s)\Gam
\end{equation}
which can be written in the coordinate-wise form as
\[
\bbW_N^{ij}(s,t)=\int_s^tW_N^i(s,v)dW^j_N(v)+(t-s)\Gam^{ij}\,\,\,\mbox{where}\,\,\,\Gam^{ij}=\sum_{l=1}^\infty
E(\xi_i(0)\xi_j(l))
\]
and the latter series converges absolutely as we will see in Section \ref{sec3}. Again, we set $\bbW_N^{(2)}(t)=
\bbW_N(t)=\bbW_N(0,t)$. For $n>2$ we define recursively,
\begin{equation}\label{rec2}
\bbW_N^{(n)}(s,t)=\int_s^t\bbW_N^{(n-1)}(s,v)\otimes dW_N(v)+\int_s^t\bbW_N^{(n-2)}(s,v)\otimes\Gam dv.
\end{equation}
Both here and above the stochastic integrals are understood in the It\^ o sense. Coordinate-wise this relation
can be written in the form
\begin{eqnarray*}
&\bbW_N^{i_1,...,i_n}(s,t)=\int_s^t\bbW_N^{i_1,...,i_{n-1}}(s,v) dW^{i_n}_N(v)\\
&+\int_s^t\bbW_N^{i_n,...,i_{n-2}}(s,v)\Gam^{i_{n-1}i_n} dv.
\end{eqnarray*}
As before, we write also $\bbW_N^{(n)}(t)=\bbW_N^{(n)}(0,t)$.

In order to formulate our first result we recall the definition of $p$-variation norms. For any path $\gam(t),\, t\in[0,T]$ in a Euclidean space having left and right limits and $p\geq 1$ the $p$-variation norm of
$\gam$ on  an interval $[U,V],\, U<V$ is given by
 \begin{equation}\label{2.7}
 \|\gam\|_{p,[U,V]}=\big(\sup_\cP\sum_{[s,t]\in\cP}|\gam(s,t)|^p\big)^{1/p}
 \end{equation}
 where the supremum is taken over all partitions $\cP=\{ U=t_0<t_1<...<t_n=V\}$ of $[U,V]$ and the sum is taken over
 the corresponding subintervals $[t_i,t_{i+1}],\, i=0,1,...,n-1$ of the partition while $\gam(s,t)$ is taken according
 to the definitions above depending on the process under consideration. In what follows always $2<p<3$ but 
 the notation (\ref{2.7}) will be used for any positive $q$ in place of $p$ there. We will prove


\begin{theorem}\label{thm2.1}
Let (\ref{2.4}) holds true with integers $L\geq 1$ and a large enough $M$. Then the
stationary sequence of random vectors $\xi(n),\,-\infty<n<\infty$ can be redefined preserving its joint distribution on
a sufficiently rich probability space which contains  also a $d$-dimensional Brownian motion $\cW$ with the covariance
matrix $\vs$ (at the time 1) so that for any integer $N\geq 1$ the processes $\bbW_N^{(\nu)},\,
\nu=1,2,...$ constructed as above with the rescaled Brownian motion $W_N(t)=N^{-1/2}\cW(Nt),\,  t\in[0,T]$
satisfy,
 \begin{equation}\label{2.8}
 \| \bbS^{(\nu)}_N-\bbW^{(\nu)}_N\|_{p/\nu,[0,T]}=O(N^{-\ve_\nu})\,\,\,\mbox{a.s.},\,\,\,\nu=1,2,...,\nu(M)
 \end{equation}
 where the constants $\ve_\nu>0$ do not depend on $N$ and $\nu(M)$ depends on $M$ but not on $N$. Moreover, $\ve_\nu$ can
 be taken of order $(\nu !)^{-2}q^\nu$ where $q=\frac {p^2}{12}(\frac p2-1)$ while $\nu(M)$ can be chosen
 so that $(\nu(M) !)^{2}q^{-\nu}$ has the order of $M$. In particular, $\nu(M)\to\infty$ as $M\to\infty$.
  If (\ref{2.4}) holds true for any $M\geq 1$ then (\ref{2.8}) is satisfied for any $\nu\geq 1$.
 \end{theorem}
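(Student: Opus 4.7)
The plan is to proceed by induction on $\nu$. The base case $\nu=1$ is a classical strong invariance principle for partial sums and is essentially what Section \ref{sec3} is advertised to produce: partition $[0,NT]$ into alternating long blocks of length $H_N$ and short ``gap'' blocks of length $h_N$; using (\ref{2.4}) together with the approximation $\xi(k) \approx E(\xi(k)\mid \cF_{k-l,k+l})$ the long-block sums become almost independent; then via characteristic function / Berry--Esseen estimates in the spirit of Berkes--Philipp, construct on an enriched probability space (a la Strassen--Dudley) Gaussian vectors with the target covariance $\vs$ from (\ref{2.6}), which can be patched into a single Brownian motion $\cW$ satisfying $|S_N(t)-W_N(t)|=O(N^{-\ve_1})$ uniformly in $t$ a.s. Since the $p$-variation over $[0,T]$ of a piecewise linear/constant path with supremum difference $O(N^{-\ve_1})$ is again $O(N^{-\ve_1'})$ after a standard interpolation plus H\"older-with-modulus argument (the sums have only $O(N)$ jumps), this gives (\ref{2.8}) for $\nu=1$.

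For the inductive step, assume the strong approximation in $p/\nu'$-variation has been established for every $\nu'<\nu$. The key structural input is Chen's relation (stated in the excerpt as Theorem \ref{hoelder} and Proposition \ref{hoelder2}): for $0\le s\le r\le u\le T$,
\[
\bbS_N^{(\nu)}(s,u)=\sum_{k=0}^{\nu}\bbS_N^{(k)}(s,r)\otimes\bbS_N^{(\nu-k)}(r,u),
\]
and the analogous identity holds for $\bbW_N^{(\nu)}$ as a consequence of the recursion (\ref{rec2}). Writing $\Del_N^{(\nu)}=\bbS_N^{(\nu)}-\bbW_N^{(\nu)}$ and subtracting these identities expresses $\Del_N^{(\nu)}(s,u)$ as a sum of products in which each summand contains at least one factor $\Del_N^{(j)}$ with $j<\nu$ (multiplied by tensor factors already controlled in $p/j$-variation). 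Thus a Young/H\"older-type multiplicative estimate for $p/\nu$-variation, combined with the companion paper \cite{Ki23}'s moment bounds
\[
E\,\|\bbS_N^{(\nu)}-\bbW_N^{(\nu)}\|_{p/\nu,[s,t]}^{M}\le C|t-s|^{a(\nu)}N^{-b(\nu)M},
\]
and the inductive hypothesis, yields a controlled moment bound on the whole $[0,T]$ for $\|\Del_N^{(\nu)}\|_{p/\nu,[0,T]}$. Here the Kolmogorov-type extension from increment moments to the global $p/\nu$-variation norm (Garsia--Rodemich--Rumsey / Friz--Victoir type) is what converts $a(\nu),b(\nu)$ into an effective a.s. rate.

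From this moment bound, Markov's inequality and the Borel--Cantelli lemma along the subsequence $N_k=2^k$ together with a monotonicity sandwich to fill the gaps give the a.s. rate $\|\Del_N^{(\nu)}\|_{p/\nu,[0,T]}=O(N^{-\ve_\nu})$. The exponent loss at each inductive step comes from two places: (i) the Chen decomposition forces a H\"older split among $\nu+1$ tensor factors, costing a factor $1/\nu$ roughly in the exponent, and (ii) the passage from $L^M$-moment control of increments to a.s. control of the global $p/\nu$-variation eats a factor proportional to $\tfrac{p}{2}-1$; multiplied together over the $\nu$ levels this yields the stated dependence $\ve_\nu\asymp(\nu!)^{-2}q^\nu$ with $q=\frac{p^2}{12}(\frac{p}{2}-1)$. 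The reachable depth $\nu(M)$ is precisely the largest $\nu$ for which the available moments (governed by $M$ in (\ref{2.4})) still cover the required tensor products in the Chen decomposition, giving $(\nu(M)!)^2 q^{-\nu(M)}\asymp M$ and $\nu(M)\to\infty$ as $M\to\infty$.

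The main obstacle is the bookkeeping in the induction: at each level one must simultaneously preserve (a) enough integrability to apply the Kolmogorov-type criterion for $p/\nu$-variation, (b) a positive remaining exponent $b(\nu)>0$ after the H\"older splitting in the Chen recursion, and (c) compatibility with the moment bounds from \cite{Ki23}. Tracking the constants carefully to justify the factorial dependence $\ve_\nu\asymp(\nu!)^{-2}q^\nu$ (rather than something worse like $\ve_\nu=0$ for some finite $\nu$) is the most delicate part, and it is precisely the reason why the Chen relation---being \emph{multiplicative} rather than an arbitrary algebraic identity---is essential: it lets one use H\"older's inequality cleanly factor by factor at each level.
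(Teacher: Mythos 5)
Your base case $\nu=1$ follows the paper's route exactly (block decomposition, characteristic-function estimates and the Berkes--Philipp coupling for the supremum norm, then the split into long intervals controlled by the sup-norm bound and short intervals controlled by Kolmogorov/H\"older-type moment estimates to upgrade to $p$-variation), and your inductive mechanism for high levels is in the same spirit as the paper's Section \ref{sec6}. However, there is a genuine gap: you cannot run the induction starting from $\nu=1$ alone. The case $\nu=2$ requires a separate direct argument, which in the paper occupies Sections \ref{subsec4.2}--\ref{subsec4.3}. Your claim that subtracting the Chen identities leaves only summands containing a factor $\Del_N^{(j)}$ with $j<\nu$ is not correct: after telescoping over the fine partition, the ``diagonal'' terms $\bbS_N^{(\nu)}(\tau_{ir},\tau_{i,r+1})$ and $\bbW_N^{(\nu)}(\tau_{ir},\tau_{i,r+1})$ involve the level-$\nu$ objects themselves and are estimated individually, in absolute value, via the moment bounds (\ref{6.2})--(\ref{6.3}). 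This is harmless for $\nu\geq 3$ because those pieces are of higher order in the interval length, but at $\nu=2$ each small-interval piece of $\bbS_N^{(2)}$ has a non-vanishing mean of order $(\tau_{i,r+1}-\tau_{ir})\Gam$, so the separated bounds on $\cI_\cD^{(1)}$ and $\cI_\cD^{(2)}$ do not tend to zero; the necessary cancellation is precisely the identification of the It\^o-correction matrix $\Gam$ inside $\bbW_N^{(2)}$, which is not a consequence of the level-$1$ coupling and is established directly via the discretization $\bbU_N^{ij}$, its comparison with $\bbV_N^{ij}$ built from Brownian increments, and the stochastic-integral approximation. Without this second base case the recursive definition (\ref{rec2}) of the targets $\bbW_N^{(\nu)}$, $\nu\geq 3$, is not even matched to the sums.

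A second, related problem is your appeal to a moment bound of the form $E\|\bbS_N^{(\nu)}-\bbW_N^{(\nu)}\|^{M}_{p/\nu,[s,t]}\leq C|t-s|^{a}N^{-bM}$ as an input from \cite{Ki23}. Such a bound on the \emph{difference} presupposes that $\xi$ and $\cW$ have already been coupled on one probability space with quantitative closeness, and constructing that coupling is the content of the present theorem; what \cite{Ki23} actually supplies are moment bounds for the individual objects and for the partition sums, namely (\ref{6.2}), (\ref{6.3}), (\ref{6.6}) and (\ref{6.7}), while the difference estimate is what the induction must produce via the cross terms $\cI_\cD^{(3)}$ and the inductive hypothesis. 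Once the $\nu=2$ case is supplied and the diagonal terms are handled as above, the remainder of your plan (H\"older splitting of the cross terms, Chebyshev plus Borel--Cantelli, and the recursion $\ve_n=\ve_{n-1}n^{-2}q$ leading to $\ve_\nu\asymp(\nu!)^{-2}q^{\nu}$ and the depth $\nu(M)$) agrees with the paper.
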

 
 We stress that a sufficiently rich probability space has here a quite precise meaning that one can define on it a sequence of 
independent uniformly didtributed on $[0.1]$ random variables which are independentof the sequence $\xi(n),\, n\in\bbZ$ 
(see Theorem \ref{thm3.8} below.

 In order to understand our assumptions observe that $\varpi_{q,p}$
is clearly non-increasing in $b$ and non-decreasing in $a$. Hence, for any pair $a,b\geq 1$,
\[
\varpi_{b,a}(n)\leq\psi(n).
\]
Furthermore, by the real version of the Riesz--Thorin interpolation
theorem or the Riesz convexity theorem (see \cite{Ga}, Section 9.3
and \cite{DS}, Section VI.10.11) whenever $\theta\in[0,1],\, 1\leq
a_0,a_1,b_0,b_1\leq\infty$ and
\[
\frac 1a=\frac {1-\theta}{a_0}+\frac \theta{a_1},\,\,\frac 1b=\frac
{1-\theta}{b_0}+\frac \theta{b_1}
\]
then
\begin{equation}\label{2.9}
\varpi_{b,a}(n)\le 2(\varpi_{b_0,a_0}(n))^{1-\theta}
(\varpi_{b_1,a_1}(n))^\theta.
\end{equation}
In particular,  using the obvious bound $\varpi_{b_1,a_1}(n)\leq 2$
valid for any $b_1\geq a_1$ we obtain from (\ref{2.9}) for pairs
$(\infty,1)$, $(2,2)$ and $(\infty,\infty)$ that for all $b\geq a\geq 1$,
\begin{eqnarray}\label{2.10}
&\varpi_{b,a}(n)\le 4(2\alpha(n))^{\frac{1}{a}-\frac{1}{b}},\,
\varpi_{b,a}(n)\le 2^{1+\frac 1a-\frac 1b}(\rho(n))^{1-\frac 1a+\frac 1b}\\
&\mbox{and}\,\,\varpi_{b,a}(n)\le 2^{1+\frac 1a}(\phi(n))^{1-\frac 1a}.
\nonumber\end{eqnarray}
We observe also that by the H\" older inequality for $b\geq a\geq 1$
and $\alpha\in(0,a/b)$,
\begin{equation}\label{2.11}
\beta(b,l)\le 2^{1-\alpha}  [\beta(a,l)]^\alpha \|\xi(0)\|^{1-\al}_{\frac{ab(1-\al)}
{a-b\al}}.
\end{equation}
 Thus, we can formulate assumption (\ref{2.4}) in terms of more familiar $\alpha,\,\rho,\,\phi,$
and $\psi$--mixing coefficients and with various moment conditions. It follows also from (\ref{2.9})
 that if $\varpi_{b,a}(n)\to 0$ as $n\to\infty$ for some $b>a\geq 1$ then
\begin{equation}\label{2.12}
\varpi_{b,a}(n)\to 0\,\,\mbox{as}\,\, n\to\infty\,\,\mbox{for all}\,\, b> a\geq 1,
\end{equation}
and so (\ref{2.12}) follows from (\ref{2.4}).

Observe that the estimate (\ref{2.8}) in the $p$-variation norm is stronger than an estimate just in the
supremum norm. In order to prove Theorem \ref{thm2.1} we will first derive directly (\ref{2.8}) for $\nu=1$
and $\nu=2$ relying, in particular, on the strong approximation theorem. Since the latter result did not seem
to appear before under our moment and mixing conditions we will provide the details which cannot be found in the
earlier literature. Observe that our mixing assumptions in (\ref{2.4}) together with the inequality (\ref{2.10})
 allow to obtain the strong approximation theorem under more general conditions than the ones appeared before.
Having (\ref{2.8}) for $\nu=1,2$ we will employ the results from \cite{Ki23} which are based on, so called, Chen's
relations in order to extend (\ref{2.8}) directly from $\nu=1,2$ to $\nu\geq 3$.

  Important classes of processes satisfying our conditions come from
dynamical systems. Let $F$ be a $C^2$ Axiom A diffeomorphism (in
particular, Anosov) in a neighborhood $\Om$ of an attractor or let $F$ be
an expanding $C^2$ endomorphism of a compact Riemannian manifold $\Om$ (see
\cite{Bow}), $g$ be either a H\" older continuous vector function or a
vector function which is constant on elements of a Markov partition and let $\xi(n)=
\xi(n,\om)=g(F^n\om)$. Here the probability space is $(\Om,\cB,P)$ where $P$ is a Gibbs
 invariant measure corresponding to some H\"older continuous function and $\cB$ is the Borel $\sig$-field.
  Let $\zeta$ be a finite Markov partition for $F$ then we can take $\cF_{kl}$
 to be the finite $\sig$-algebra generated by the partition $\cap_{i=k}^lF^i\zeta$.
 In fact, we can take here not only H\" older continuous $g$'s but also indicators
of sets from $\cF_{kl}$. The conditions of Theorems \ref{thm2.1} allow all such functions
since the dependence of H\" older continuous functions on $m$-tails, i.e. on events measurable
with respect to $\cF_{-\infty,-m}$ or $\cF_{m,\infty}$, decays exponentially fast in $m$ and
the condition (\ref{2.4}) is much weaker than that. A related class of dynamical systems
corresponds to $F$ being a topologically mixing subshift of finite type which means that $F$
is the left shift on a subspace $\Om$ of the space of one (or two) sided
sequences $\om=(\om_i,\, i\geq 0), \om_i=1,...,l_0$ such that $\om\in\Om$
if $\pi_{\om_i\om_{i+1}}=1$ for all $i\geq 0$ where $\Pi=(\pi_{ij})$
is an $l_0\times l_0$ matrix with $0$ and $1$ entries and such that $\Pi^n$
for some $n$ is a matrix with positive entries. Again, we have to take in this
case $g$ to be a H\" older continuous bounded function on the sequence space above,
 $P$ to be a Gibbs invariant measure corresponding to some H\" older continuous function and to define
$\cF_{kl}$ as the finite $\sig$-algebra generated by cylinder sets
with fixed coordinates having numbers from $k$ to $l$. The
exponentially fast $\psi$-mixing is well known in the above cases (see \cite{Bow}) and this property
is much stronger than what we assume in (\ref{2.4}). Among other
dynamical systems with exponentially fast $\psi$-mixing we can mention also the Gauss map
$Fx=\{1/x\}$ (where $\{\cdot\}$ denotes the fractional part) of the
unit interval with respect to the Gauss measure $G$ and more general transformations generated
by $f$-expansions (see \cite{Hei}). Gibbs-Markov maps which are known to be exponentially fast
$\phi$-mixing (see, for instance, \cite{MN}) can be also taken as $F$ in Theorem \ref{thm2.1}
with $\xi(n)=g\circ F^n$ as above.

\subsection{Straightforward continuous time setup}\label{subsec2.2}

Our direct continuous time setup consists of a Lebesgue integrable $d$-dimensional stationary process
$\xi(t),\, t\geq 0$ on a probability space $(\Om,\cF,P)$ satisfying (\ref{1.1}) and of a family of
$\sig$-algebras $\cF_{st}\subset\cF,\,-\infty\leq s\leq t\leq\infty$ such
that $\cF_{st}\subset\cF_{s't'}$ if $s'\leq s$ and $t'\geq t$. For all $t\geq 0$ we set
\begin{equation}\label{2.14}
\varpi_{b,a}(t)=\sup_{s\geq 0}\varpi_{b,a}(\cF_{-\infty,s},\cF_{s+t,\infty})
\end{equation}
and
\begin{equation}\label{2.15}
\beta (a,t)=\sup_{s\geq 0}\|\xi(s)-E(\xi(s)|\cF_{s-t,s+t})\|_a.
\end{equation}
where $\varpi_{b,a}(\cG,\cH)$ is defined by (\ref{2.1}).  We continue to
impose the assumption (\ref{2.4}) on the  decay rates  of
$\varpi_{b,a}(t)$ and $\beta (a,t)$. Although they only involve integer
values of $t$, it will  suffice since these are non-increasing functions of $t$.

Next, we introduce the covariance matrix $\vs=(\vs_{ij})$ defined by
\begin{equation}\label{2.16}
\vs_{ij}=\lim_{t\to\infty}\frac 1t\int_{0}^t\int_{0}^t\vs_{ij}(u-v)dudv,\,\,
\mbox{where}\,\, \vs_{ij}(u-v)=E(\xi_i(u)\xi_j(v))
\end{equation}
and the limit here exists under our conditions in the same way as in the discrete time setup.
In order to formulate our results we define the "increments" of multiple iterated integrals
\[
\Sig^{(\nu)}(u,v)=\int_{u\leq u_1\leq...\leq u_\nu\leq v}\xi(u_1)\otimes\cdots\otimes\xi(u_\nu)du_1\cdots du_\nu,\, u\leq v
\]
which coordinate-wise have the form
\[
\Sig^{(\nu)}(u,v)=\{\Sig^{i_1,...,i_\nu}(u,v),\, 1\leq i_1,...,i_\nu\leq d\}
\]
with
\[
\Sig^{i_1,...,i_\nu}(u,v)=\int_{u\leq u_1\leq...\leq u_\nu\leq v}\xi_{i_1}(u_1)\xi_{i_2}(u_2)\cdots\xi_{i_\nu}(u_\nu)du_1\cdots du_\nu
\]
where we use the same letter $\Sig$ as in the discrete time case which should not lead to a confusion. All integrals here and below are supposed to exist.
As in the discrete time case we set also
\[
\bbS_N^{(\nu)}(s,t)=N^{-\nu/2}\Sig^{(\nu)}(sN,tN),\,\,\bbS_N^{i_1,...,i_\nu}(s,t)=
N^{-\nu/2}\Sig^{i_1,...,i_\nu}(sN,tN),
\]
$\bbS_N(s,t)=\bbS_N^{(2)}(s,t)$ and $S_N(s,t)=\bbS_N^{(1)}(s,t)$.   When $u=s=0$ we will write
\[
\Sig^{(\nu)}(v)=\Sig^{(\nu)}(0,v)\quad\mbox{and}\quad \bbS_N^{(\nu)}(t)=\bbS_N^{(\nu)}(0,t).
\]
Next, we introduce the matrix
\[
\Gam=(\Gam^{ij}),\,\,\Gam^{ij}=\int_1^\infty du\int_0^1E(\xi_i(v)\xi_j(u))dv+\int_0^1du\int_0^uE(\xi_i(v)\xi_j(u))dv.
\]
Then we have
\begin{theorem}\label{thm2.2}
Let (\ref{2.4}) holds true with integers $L\geq 1$ and a large enough $M$ where $\varpi$
and $\be$ are given by (\ref{2.14}) and (\ref{2.15}). Then the vector stationary process
  $\xi(t),\,-\infty<t<\infty$ can be redefined preserving its joint distribution on a sufficiently rich probability space which
   contains also a $d$-dimensional Brownian motion $\cW$ with the covariance matrix $\vs$ (at the time 1) so that for
  $\bbW_N^{(\nu)}$, constructed as in Theorem \ref{thm2.1} with  the rescaled Brownian motion
 $W_N(t)=N^{-1/2}\cW(Nt),\,  t\in[0,T]$ and the matrix $\Gam$ introduced above, and for any $N\geq 1$ the estimate (\ref{2.8})
 remains true for $\bbS_N^{(\nu)}$ with $\nu=1,...,\nu(M)$ defined above in the present continuous time setup and, again,
 $\nu(M)\to\infty$ as $M\to\infty$.
 \end{theorem}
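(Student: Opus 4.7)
The plan is to reduce Theorem~\ref{thm2.2} to the discrete-time Theorem~\ref{thm2.1} by introducing the time-one averaged sequence
\[
\eta(n)=\int_n^{n+1}\xi(u)\,du,\qquad n\in\bbZ,
\]
equipped with the shifted filtration $\widetilde\cF_{m,n}=\cF_{m,n+1}$. Then $\eta$ is a centered stationary $d$-dimensional sequence. By Jensen's inequality $E|\eta(0)|^K\le E|\xi(0)|^K<\infty$, and the mixing and approximation rates transfer directly from $\xi$ to $\eta$ up to a one-unit shift in the index, so $\eta$ satisfies the discrete-time hypothesis~(\ref{2.4}). A short Fubini computation then shows that the discrete long-run covariance of $\eta$ coincides with the continuous-time $\vs$ of~(\ref{2.16}), while the discrete drift $\sum_{l\ge 1}E(\eta_i(0)\eta_j(l))$ reassembles, after splitting off the diagonal contribution from $\eta(0)\otimes\eta(0)$, precisely into the matrix $\Gam$ prescribed in the statement.

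Applying Theorem~\ref{thm2.1} to $\eta$ yields, on a sufficiently rich probability space still carrying a copy of the continuous-time process $\xi$, a $d$-dimensional Brownian motion $\cW$ with covariance $\vs$ at time one such that the normalized discrete iterated sums
\[
\widetilde\bbS_N^{(\nu)}(t)=N^{-\nu/2}\!\!\sum_{0\le k_1<\cdots<k_\nu<[Nt]}\!\!\eta(k_1)\otimes\cdots\otimes\eta(k_\nu)
\]
satisfy $\|\widetilde\bbS_N^{(\nu)}-\bbW_N^{(\nu)}\|_{p/\nu,[0,T]}=O(N^{-\ve_\nu})$ almost surely for $\nu=1,\dots,\nu(M)$, with the exponents $\ve_\nu$ and range $\nu(M)$ of Theorem~\ref{thm2.1}. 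It then suffices to prove the matching discretization bound
\[
\|\bbS_N^{(\nu)}-\widetilde\bbS_N^{(\nu)}\|_{p/\nu,[0,T]}=O(N^{-\ve_\nu})\quad\text{a.s.}
\]

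This discretization estimate is the core of the proof. For $\nu=1$ the difference reduces to the endpoint piece $N^{-1/2}\int_{[Nt]}^{Nt}\xi(u)\,du$, which is $O(N^{-1/2+\delta})$ uniformly by stationarity and the moment assumption. For $\nu\ge 2$ I would partition the continuous simplex according to which unit intervals the integration variables $u_1<\cdots<u_\nu$ occupy: the off-diagonal contribution (each $u_j$ in a distinct integer interval) matches $\widetilde\bbS_N^{(\nu)}(t)$ up to boundary corrections expressible through lower-level iterated objects, while the diagonal contributions (several $u_j$'s in the same unit interval) contract into drift-like pieces that must be compared to the matrix $\Gam$ entering the recursion~(\ref{rec2}) for $\bbW_N^{(\nu)}$. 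Chen's relation holds for both $\bbS_N^{(\nu)}$ and $\widetilde\bbS_N^{(\nu)}$, and writing $\bbS_N^{(\nu)}(s,t)-\widetilde\bbS_N^{(\nu)}(s,t)$ recursively in terms of lower-order differences multiplied by lower-order signatures lets us propagate the $O(N^{-\ve_\nu})$ rate by induction on $\nu$, the exponent shrinking geometrically as in Theorem~\ref{thm2.1}.

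The main obstacle, as in the discrete case, is promoting pointwise-in-$(s,t)$ moment estimates into a uniform $p/\nu$-variation bound. I would combine the two-parameter moment estimates for iterated sums already established in \cite{Ki23} with a Garsia--Rodemich--Rumsey or dyadic chaining argument tailored to the multiplicative Chen structure of $(s,t)\mapsto\bbS_N^{(\nu)}(s,t)-\widetilde\bbS_N^{(\nu)}(s,t)$, converting the moment control into almost-sure bounds via Borel--Cantelli along dyadic scales. The delicate point will be the bookkeeping of the diagonal pieces: several $u_j$'s lying in the same unit interval produce short-range correlations whose contribution is precisely what distinguishes the continuous-time $\Gam$ from its discrete-time analogue, and these terms must be tracked carefully enough to absorb exactly into the $\Gam$ prescribed by the statement rather than leaving a residual drift.
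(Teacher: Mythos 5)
Your reduction to discrete time via $\eta(n)=\int_n^{n+1}\xi(u)\,du$ is exactly the paper's first move (Section \ref{subsec5.1}): the moment/mixing transfer, the Fubini identification of $\vs$, and the observation that the discrete drift of $\eta$ differs from the continuous-time $\Gam$ by the boundary matrix $F_{ij}=\int_0^1du\int_0^uE(\xi_i(v)\xi_j(u))dv$ all appear there, as does the coupling step you gloss over (transferring the redefined $\eta$ back to a space carrying $\xi$, done in the paper via Lemma A1 of \cite{BP}). Where you diverge is in the treatment of $\nu\ge 3$. The paper performs the continuous-versus-discrete comparison \emph{only at levels $1$ and $2$} (the estimates $\|S_N-Z_N\|_{p,[0,T]}$ and $\|\bbS_N-\bbZ_N-\bbF\|_{p/2,[0,T]}$ imported from \cite{Ki23}), and then obtains all higher levels from the Chen-relation induction of Section \ref{sec6}, which compares $\bbS_N^{(n)}$ directly with $\bbW_N^{(n)}$ and needs as input only the $\nu=1,2$ cases plus the variation-norm moment bounds (\ref{6.6})--(\ref{6.7}). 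That induction is setup-independent, which is why the paper never has to discretize the level-$\nu$ simplex at all.

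Your plan instead requires the discretization bound $\|\bbS_N^{(\nu)}-\widetilde\bbS_N^{(\nu)}\|_{p/\nu,[0,T]}=O(N^{-\ve_\nu})$ at \emph{every} level, and this is where the genuine gap lies. First, Theorem \ref{thm2.1} applied to $\eta$ produces $\bbW_N^{(\nu)}$ built with the drift $\Gam_\eta=\sum_{l\ge1}E(\eta(0)\otimes\eta(l))$, which is \emph{not} the $\Gam$ of Theorem \ref{thm2.2}; the discrepancy (of size $F$) enters the recursion (\ref{rec2}) at level $2$ and then propagates through the terms $\int_s^t\bbW_N^{(n-2)}(s,v)\otimes\Gam\,dv$ at every subsequent level, so your displayed conclusion from Theorem \ref{thm2.1} is not what that theorem delivers for the $\bbW_N^{(\nu)}$ of the statement. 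Second, the diagonal contributions (several $u_j$'s in one unit interval) at level $\nu\ge3$ do not merely produce ``drift-like pieces'': they produce iterated integrals of $\xi$ over a single unit interval tensored with lower-level signatures of all intermediate orders, and showing that these reassemble, after summation over $k$, exactly into the propagated $\Gam$-corrections with an $O(N^{-\ve_\nu})$ error in the $p/\nu$-variation norm is the hardest part of your argument and is left entirely as a sketch. It is not a priori doomed, but it is substantially more laborious than the paper's route, which avoids it altogether; as written, your proof is complete only for $\nu=1,2$.
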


\subsection{Continuous time suspension setup}\label{subsec2.3}

Here we start with a complete probability space $(\Om,\cF,P)$, a
$P$-preserving invertible transformation $\vt:\,\Om\to\Om$ and
a two parameter family of countably generated $\sig$-algebras
$\cF_{m,n}\subset\cF,\,-\infty\leq m\leq n\leq\infty$ such that
$\cF_{mn}\subset\cF_{m'n'}\subset\cF$ if $m'\leq m\leq n
\leq n'$ where $\cF_{m\infty}=\cup_{n:\, n\geq m}\cF_{mn}$ and
$\cF_{-\infty n}=\cup_{m:\, m\leq n}\cF_{mn}$. The setup includes
also a (roof or ceiling) function $\tau:\,\Om\to (0,\infty)$ such that
for some $\hat L>0$,
\begin{equation}\label{2.17}
\hat L^{-1}\leq\tau\leq\hat L.
\end{equation}
Next, we consider the probability space $(\hat\Om,\hat\cF,\hat P)$ such that $\hat\Om=\{\hat\om=
(\om,t):\,\om\in\Om,\, 0\leq t\leq\tau(\om)\},\, (\om,\tau(\om))=(\vt\om,0)\}$, $\hat\cF$ is the
restriction to $\hat\Om$ of $\cF\times\cB_{[0,\hat L]}$, where $\cB_{[0,\hat L]}$ is the Borel
$\sig$-algebra on $[0,\hat L]$ completed by the Lebesgue zero sets, and for any $\Gam\in\hat\cF$,
\[
\hat P(\Gam)=\bar\tau^{-1}\int\bbI_\Gam(\om,t)dtdP(\om)\,\,\mbox{where}\,\,\bar\tau=\int\tau dP=E\tau
\]
and $E$ denotes the expectation on the space $(\Om,\cF,P)$.

Finally, we introduce a Lebesgue integrable vector valued stochastic process $\xi(t)=\xi(t,(\om,s))$, $-\infty<t<\infty,\, 0\leq
s\leq\tau(\om)$ on $\hat\Om$ satisfying
\begin{eqnarray*}
&\int\xi(t)d\hat P=0,\,\,\xi(t,(\om,s))=\xi(t+s,(\om,0))=\xi(0,(\om,t+s))\,\,\mbox{if}\,\, 0\leq t+s<\tau(\om)\\
&\mbox{and}\,\,
\xi(t,(\om,s))=\xi(0,(\vt^k\om,u))\,\,\mbox{if}\,\, t+s=u+\sum_{j=0}^{k-1}\tau(\vt^j\om)\,\,\mbox{and}\,\,
0\leq u<\tau(\vt^k\om).
\end{eqnarray*}
This construction is called in dynamical systems a suspension and it is a standard fact that $\xi$ is a
stationary process on the probability space $(\hat\Om,\hat\cF,\hat P)$ and in what follows we will write
also $\xi(t,\om)$ for $\xi(t,(\om,0))$. In this setup we define
\[
\Sig^{(\nu)}(u,v)=\int_{u\bar\tau\leq u_1\leq...\leq u_\nu\leq v\bar\tau}\xi(u_1)\otimes\cdots\otimes\xi(u_\nu)du_1\cdots du_\nu,\, u\leq v,
\]
\[
\Sig^{i_1,...,i_\nu}(u,v)=\int_{u\bar\tau\leq u_1\leq...\leq u_\nu\leq v\bar\tau}\xi_{i_1}(u_1)\xi_{i_2}(u_2)\cdots\xi_{i_\nu}(u_\nu)du_1\cdots du_\nu,
\]
all integrals here and below are supposed to exist,
\[
\bbS_N^{(\nu)}(s,t)=N^{-\nu/2}\Sig^{(\nu)}(sN,tN),\,\,\,\bbS_N^{i_1,...,i_\nu}(s,t)=N^{-\nu/2}\Sig^{i_1,...,i_\nu}(sN,tN)
\]
and, again, $\bbS_N(s,t)=\bbS_N^{(2)}(s,t)$, $S_N(s,t)=\bbS_N^{(1)}(s,t)$,
\[
S_N(s,t)=\bbS_N^{(1)}(s,t),\,\,\Sig^{(\nu)}(v)=\Sig^{(\nu)}(0,v)\quad\mbox{and}\quad \bbS_N^{(\nu)}(t)=\bbS_N^{(\nu)}(0,t).
\]

Set $\eta(\om)=\int_0^{\tau(\om)}\xi(s,\om)ds$, $\eta(m)=\eta(m,\om)=\eta\circ\vt^m(\om)$ and
\begin{equation}\label{2.18}
\be(a,l)=\sup_m\max\big(\|\tau\circ\vt^m-E(\tau\circ\vt^m|\cF_{m-l,m+l})\|_a,\,\|\eta(m)-E(\eta(m)|\cF_{m-l,m+l})\|_a\big).
\end{equation}
We define $\varpi_{b,a}(n)$ by (\ref{2.2}) with respect to the $\sig$-algebras $\cF_{kl}$ appearing here.
Observe also that $\eta(k)=\eta\circ\vt^k$ is a stationary sequence of random vectors and we introduce also the covariance matrix
 \begin{equation}\label{2.19}
\vs_{ij}=\lim_{n\to\infty}\frac 1n\sum_{k,l=0}^nE(\eta_i(k)\eta_j(l))
\end{equation}
where the limit exists under our conditions in the same way as in (\ref{2.6}). We introduce also the matrix
\[
\Gam=(\Gam^{ij}),\,\,\Gam^{ij}=\sum_{l=1}^\infty E(\eta_i(0)\eta_j(l))+E\int_0^{\tau(\om)}\xi_j(s,\om)ds\int_0^s\xi_i(u,\om)du.
\]
The following is our limit theorem in the present setup.

\begin{theorem}\label{thm2.3}
Assume that $E\eta=E\int_0^\tau\xi(s)ds=0$ and $E\int_0^\tau |\xi(s)|^{ K}ds<\infty$. The latter replaces the moment
condition on $\xi(0)$ in (\ref{2.4}) while other conditions there are supposed to hold true with integers $L\geq 1$ and
a large enough $M$ where $\varpi$ is given by (\ref{2.2}) for $\sig$-algebras $\cF_{mn}$
appearing in this subsection and $\be$ is defined by (\ref{2.18}). Then the process $\xi(t),\,-\infty<t<\infty$ can be
redefined preserving its joint distribution on a sufficiently rich probability space which contains also a $d$-dimensional Brownian
motion $\cW$ with the covariance matrix $\vs$ (at the time 1) so that for $\bbW_N^{(\nu)}$, constructed
as in Theorem \ref{thm2.1} with the rescaled Brownian motion $W_N(t)=N^{-1/2}\cW(Nt),\,  t\in[0,T]$ and the matrix $\Gam$ introduced above, and for any $N\geq 1$
 the estimate (\ref{2.8}) remains true for $\bbS_N^{(\nu)}$ with $\nu=1,...,\nu(M)$ defined above.
 \end{theorem}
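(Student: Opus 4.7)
The plan is to reduce the continuous-time suspension setup to the discrete-time result of Theorem \ref{thm2.1} applied to the stationary sequence $\eta(k) = \eta \circ \vt^k$, handling the corrections from partial cells by combining Chen's multiplicative relations with the ergodic theorem. First I would verify that $\{\eta(k)\}_{k\in\bbZ}$ satisfies the hypotheses of Theorem \ref{thm2.1}: the mixing coefficient $\varpi_{b,a}(n)$ is defined in (\ref{2.2}) with the $\sig$-algebras $\cF_{mn}$ from the present subsection, the approximation rate for $\eta$ in (\ref{2.18}) agrees with (\ref{2.3}), and $\|\eta(0)\|_K<\infty$ follows from $E\int_0^\tau|\xi(s)|^K ds<\infty$ together with the two-sided bound $\hat L^{-1}\leq\tau\leq\hat L$ via Jensen. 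The covariance matrix of the partial sums of $\eta$ in (\ref{2.19}) is precisely the $\vs$ of Theorem \ref{thm2.1}. Applying the latter produces a Brownian motion $\cW$ with covariance $\vs$ and the associated iterated processes built from the discrete drift $\tilde\Gam^{ij}=\sum_{l\geq 1}E(\eta_i(0)\eta_j(l))$ such that the rescaled iterated sums $\tilde\bbS_N^{(\nu)}(t)=N^{-\nu/2}\sum_{0\leq k_1<\cdots<k_\nu<Nt}\eta(k_1)\otimes\cdots\otimes\eta(k_\nu)$ approximate the corresponding Lyons extensions in $p/\nu$-variation with rate $O(N^{-\ve_\nu})$ a.s.

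Next I would decompose the continuous iterated integral $\Sig^{(\nu)}(0,Nt\bar\tau)$ relative to the cell partition $0=\tau_0<\tau_1<\cdots$ with $\tau_n=\sum_{j=0}^{n-1}\tau\circ\vt^j$. Chen's relation yields a sum over multi-indices of cells containing the integration variables, which splits into: (i) \emph{off-diagonal} contributions in which all $\nu$ variables lie in $\nu$ distinct cells $k_1<\cdots<k_\nu$, giving exactly $\eta(k_1)\otimes\cdots\otimes\eta(k_\nu)$; (ii) \emph{in-cell} contributions in which two or more variables lie in a common cell, giving within-cell iterated integrals of $\xi$ paired with $\eta$-products over the remaining cells; and (iii) \emph{boundary} contributions from the incomplete cell at the endpoint $Nt\bar\tau$. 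The type (i) terms produce $\tilde\bbS_N^{(\nu)}$ directly. The type (ii) terms, grouped by cell, are cumulative sums of stationary functionals of $(\vt^k\om,\xi(\cdot,\vt^k\om))$; by a strong law with rate, valid under our mixing assumptions and bounded $\tau$, their normalized contribution converges a.s.\ at rate $O(N^{-\delta})$ to a quantity proportional to $t\cdot E\int_0^\tau\xi_j(s)\int_0^s\xi_i(u)\,du\,ds$, while the higher-order in-cell clumpings (three or more variables per cell) are of strictly lower order in $N$ after normalization. These ergodic limits are precisely what upgrades $\tilde\Gam$ to the matrix $\Gam$ in the statement.

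For the time change from the discrete cell index to the continuous variable, set $n(t)=\max\{n:\tau_n\leq Nt\bar\tau\}$. The strong invariance principle applied to the bounded stationary sequence $\tau\circ\vt^k-\bar\tau$ (itself an instance of Theorem \ref{thm2.1} with $\nu=1$, since (\ref{2.18}) also supplies the approximation rate for $\tau$) gives $|n(t)-Nt|=O(N^{1/2-\ve'})$ a.s.\ Together with $\tau\leq\hat L$, this controls both the discrepancy between working on $[0,\tau_{n(t)}]$ and $[0,Nt\bar\tau]$, and the effect of replacing the summation range $\{k<Nt\}$ by $\{k<n(t)\}$ in the iterated sums. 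The main obstacle will be controlling all of these pieces --- the ergodic corrections, the boundary remainder, and the time-change error --- not merely in the supremum norm but in the stronger $p/\nu$-variation norm; for this I would apply the $p$-variation inequalities for iterated sums from Section \ref{sec4} together with the moment bounds from \cite{Ki23}, which bound a $p$-variation norm of a multiplicative remainder in terms of polynomial expressions in its supremum norm and its within-cell pathwise total variation, the latter being uniformly controlled by $\tau\leq\hat L$ and $E\int_0^\tau|\xi|^K ds<\infty$.

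Finally, writing $\bbS_N^{(\nu)}=\tilde\bbS_N^{(\nu)}+\cE_N^{(\nu)}$ with $\cE_N^{(\nu)}$ collecting the type (ii), type (iii) and time-change contributions analyzed above, the additional deterministic drift produced by $\cE_N^{(\nu)}$ corresponds exactly to replacing $\tilde\Gam$ by $\Gam$ in the recursive definition (\ref{rec1})--(\ref{rec2}) of the Lyons extension, while the stochastic error is $O(N^{-\delta})$ a.s.\ in $p/\nu$-variation. The triangle inequality in $p/\nu$-variation then yields (\ref{2.8}) with the same range $\nu=1,\ldots,\nu(M)$ and with an exponent $\ve_\nu>0$ depending on $\nu$, $M$ and the decay rates in the same way as in Theorem \ref{thm2.1}, as $\nu(M)$ is inherited directly from the discrete-time result applied to $\eta$.
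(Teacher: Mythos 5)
Your reduction to Theorem \ref{thm2.1} applied to $\eta(k)=\eta\circ\vt^k$, the verification of its hypotheses from (\ref{2.17}), (\ref{2.18}) and $E\int_0^\tau|\xi(s)|^{K}ds<\infty$, the control of the time change via the partial sums of $\tau\circ\vt^k-\bar\tau$, and the identification of the extra drift $F_{ij}=E\int_0^{\tau}\xi_j(s)ds\int_0^s\xi_i(u)du$ that upgrades the discrete drift $\sum_{l\geq 1}E(\eta_i(0)\eta_j(l))$ to $\Gam$ --- all of this coincides with the paper's argument for $\nu=1,2$ (Section \ref{subsec5.2}, which imports the variational moment bounds (\ref{5.4}) and (\ref{5.5}) from \cite{Ki23} and converts them to a.s.\ rates by Chebyshev and Borel--Cantelli). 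Where you genuinely diverge is at $\nu\geq 3$: you propose to redo the cell decomposition of $\Sig^{(\nu)}$ at every level, classifying multi-indices by how the $\nu$ integration variables clump into cells and estimating each clumping pattern separately in $p/\nu$-variation. The paper instead stops the suspension-specific analysis at $\nu=2$ and invokes the setup-independent induction of Section \ref{sec6}: the Chen relations for $\bbS_N^{(n)}$ and $\bbW_N^{(n)}$, the variational moment bounds (\ref{6.6})--(\ref{6.7}) and the H\"older-inequality bootstrap (\ref{6.5}) produce the rate at level $n$ from the rates at levels below $n$ with no further reference to the cell structure or to $\eta$ at all. Your route is workable in principle but costs much more than it buys: at order $\nu$ the in-cell contributions with exactly two variables sharing a cell do not produce a constant drift but feed into the recursive term $\int_s^t\bbW_N^{(\nu-2)}(s,v)\otimes\Gam\,dv$ of (\ref{rec2}), and the full combinatorics of mixed clumpings --- each pattern needing its own a.s.\ rate in $p/\nu$-variation --- is exactly the bookkeeping that the Section \ref{sec6} induction is designed to bypass. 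If you keep your levels $1$ and $2$ and then switch to that induction, your proof becomes the paper's.
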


This theorem extends applicability of our results to hyperbolic flows (see \cite{BR}) which is an important class of continuous time dynamical systems.

\begin{remark}\label{rem2.3+}
By the definition $W_N(t)=N^{-1/2}\cW(Nt)$ and changing time in the stochastic integral
\[
\bbW_N(t)=\int_0^tW_N(v)\otimes dW_N(v)+t\Gam=N^{-1}(\int_0^{Nt}\cW(u)\otimes d\cW(u)+Nt\Gam)=N^{-1}\bbW_1(Nt).
\]
Continuing by induction we have
\begin{eqnarray*}
&\bbW_N^{(n)}(t)=\int_0^t\bbW_N^{(n-1)}(v)\otimes dW_N(v)+\int_0^t\bbW_N^{(n-2)}(v)\otimes\Gam dv\\
&=N^{-n/2}(\int_0^{Nt}\bbW_1^{(n-1)}(u)\otimes d\cW(u)+\int_0^{Nt}\bbW_1^{(n-2)}(u)\otimes\Gam du)=N^{-n/2}\bbW_1^{(n)}(Nt).
\end{eqnarray*}
Hence, we can replace $\bbW_N^{(\nu)}$ in (\ref{2.8}) (as well as in Theorems \ref{thm2.2} and \ref{thm2.3}) by 
$\hat\bbW_N^{(\nu)}$ where $\hat\bbW_N^{(\nu)}(t)=N^{-n/2}\bbW_1^{(\nu)}(Nt)$ which could be a helpful clarification
since $\bbW_1^{(\nu)}$ does not depend on $N$ and the dependence on $N$ appears here only in normalization and time
stretch coefficients.
\end{remark}

\subsection{Law of iterated logarithm}\label{subsec2.4}

For any $0\leq s\leq t<\infty$ and $\nu=1,2,...$ define inductively $\fW^{(1)}(s,t)=\cW(s,t)$,
$\fW_\Gam^{(2)}(s,t)=\int_s^t\cW(s,v)\otimes d\cW(v)+(t-s)\Gam$
and for $\nu=3,4,...$,
\[
\fW_\Gam^{(\nu)}(s,t)=\int_s^t\fW_\Gam^{(\nu-1)}(s,v)\otimes d\cW(v)+\int_s^t\fW_\Gam^{(\nu-2)}(s,v)\otimes\Gam dv
\]
where $\cW$ is the Brownian motion with a covariance matrix $\vs$ appearing in Theorems \ref{thm2.1}--\ref{thm2.3}.
Written coordinate-wise this recursive relation has the form
\[
\fW_\Gam^{i_1,...,i_\nu}(s,t)=\int_s^t\fW_\Gam^{i_1,...,i_{\nu-1}}(s,v)d\cW^{i_\nu}(v)+\int_s^t\fW_\Gam^{i_1,...,i_{\nu-2}}(s,v)\Gam^{i_{\nu-1},\i_\nu}dv.
\]
It follows from Theorems \ref{thm2.1}--\ref{thm2.3} that,
\begin{equation}\label{2.21}
\sup_{0\leq t\leq T}|\Sig^{(\nu)}(tN)-\fW_\Gam^{(\nu)}(tN)|=O(N^{\frac \nu 2-\ve})\,\,\,\mbox{a.s.}\,\,\mbox{as}\,\, N\to\infty
\end{equation}
where
\[
|\Sig^{(\nu)}(u)-\fW_\Gam^{(\nu)}(u)|=\max_{i_1,...,i_\nu}|\Sig^{i_1,...,i_\nu}(u)-\fW_\Gam^{i_1,...,i_\nu}(u)|
\]
and we write $\Sig^{(\nu)}(u)=\Sig^{(\nu)}(0,u)$ and $\fW_\Gam^{(\nu)}(u)=\fW_\Gam^{(\nu)}(0,u)$. It is not difficult to see that
(\ref{2.21}) implies (see Section \ref{sec7}) that,
\begin{equation}\label{2.22}
\sup_{0\leq t\leq T}|\Sig^{(\nu)}(t\tau)-\fW_\Gam^{(\nu)}(t\tau)|=O(\tau^{\frac \nu 2-\ve})\,\,\,\mbox{a.s.}\,\,\mbox{as}\,\,\tau\to\infty.
\end{equation}

Next, introduce iterated stochastic integrals defined inductively by
\[
\fW^{(\nu)}(s,t)=\int_s^t\fW^{(\nu-1)}(s,v)\otimes d\cW(v)
\]
where $\fW^{(1)}(s,v)=\cW(s,v)=\cW(v)-\cW(s)$ and we write again $\fW^{(\nu)}(u)=\fW^{(\nu)}(0,u)$.
Coordinate-wise this relation has the form
\[
\fW^{i_1,...,i_\nu}(s,t)=\int_s^t\fW^{i_1,...,i_{\nu-1}}(s,v)d\cW^{i_\nu}(v).
\]
We will show in Section \ref{sec7} that with probability one,
\begin{equation}\label{2.23}
(\tau\log\log\tau)^{-\nu/2}\sup_{0\leq t\leq T}|\fW_\Gam^{(\nu)}(t\tau)-\fW^{(\nu)}(t\tau)|\to 0\,\,\mbox{as}\,\,\tau\to\infty
\end{equation}
which will allow us to deduce a functional law of iterated logarithm for $\Sig^{(\nu)}$.

Namely, write $\cW=\vs^{1/2}\bfW$ where $\bfW=(\bfW_1,...,\bfW_d)$ is the standard $d$-dimensional Brownian motion and $\vs^{1/2}=(\vs_{ij}^{1/2})$
is the square root of the covariance matrix $\vs$. Then
\begin{eqnarray}\label{2.24}
&\fW^{i_1,...,i_\nu}(t)=\int_0^td\cW^{i_\nu}(t_\nu)\int_0^{t_\nu}...\int_0^{t_2}d\cW^{i_1}(t_1)\\
&=\sum_{1\leq j_1,...,j_\nu\leq d}a_{j_1,...,j_\nu}^{i_1,...,i_\nu}\int_0^td\bfW_{j_\nu}(t_\nu)\int_0^{t_\nu}d\bfW_{j_{\nu-1}}(t_{\nu-1})...
\int_0^{t_2}d\bfW_{j_1}(t_1)
\nonumber\end{eqnarray}
where $a_{j_1,...,j_\nu}^{i_1,...,i_\nu}=\vs^{1/2}_{i_\nu j_\nu}\vs^{1/2}_{i_{\nu-1}j_{\nu-1}}\cdots\vs^{1/2}_{i_1j_1}$.

Let $\cH_d$ be the space of all absolute continuous functions $f=(f_1,...,f_d):\,[0,T]\to\bbR^d$ such that $f(0)=0$ and
$\sum_{1\leq i\leq d}\int_0^T(f'_i(t))^2dt<\infty$. Introducing the scalar product for $f,g\in\cH_d$ by
\[
\langle f,g\rangle=\sum_{i=1}^d\int_0^Tf'_i(t)g'_i(t)dt
\]
makes $\cH_d$ a Hilbert space with the norm $|f|_1=\langle f,f\rangle^{1/2}=\| f'\|_{L^2}$. Define also the set $\cC^d$ of all
continuous paths $\gam:\,[0,T]\to\bbR^d$ with the supremum norm and observe that the embedding $\cH_d\to\cC^d$ is compact. Define
\begin{eqnarray*}
&X_\tau^{(\nu)}(t)=(\tau\ln\ln\tau)^{-\nu/2}\fW^{(\nu)}(t\tau),\, t\in[0,T],\,\tau>0\,\,\,\mbox{and}\\
&X^{i_1,...,i_\nu}_\tau(t)=(\tau\ln\ln\tau)^{-\nu/2}\fW^{(i_1,...,i_\nu)}(t\tau).
\end{eqnarray*}
By Proposition 3.1 and Corollary 3.2 from \cite{Bal} we obtain
\begin{proposition}\label{prop2.4} For any $1\leq\nu\leq\nu(M)$ and $1\leq i_1,...,i_\nu\leq d$ the family $\{ X^{i_1,...,i_\nu}_\tau,\,\tau\geq 4\}$
is relatively compact in $\cC^1$ with probability one and its limit set as $\tau\to\infty$ coincides a.s. with the set of all paths $\gam_{i_1,...,i_\nu}:\,[0,T]\to\bbR$
of the form
\begin{equation}\label{2.25}
\gam_{i_1,...,i_\nu}(t)=\sum_{1\leq j_1,...,j_\nu\leq d}a_{j_1,...,j_\nu}^{i_1,...,i_\nu}\int_0^tdf_{j_\nu}(t_\nu)\int_0^{t_\nu}df_{j_{\nu-1}}(t_{\nu-1})...
\int_0^{t_2}df_{j_1}(t_1)
\end{equation}
where $f$ varies in $\{ f\in\cH_d:\,\frac 12|f|^2_1\leq T\}$. Correspondingly, $\{ X_\tau^{(\nu)},\,\tau\geq 4\}$ is relatively compact in $\cC^{d^\nu}$
with probability one
 and its limit set as $\tau\to\infty$ coincides a.s. with the set of all paths $\gam:\,[0,T]\to\bbR^{d^\nu}$ whose $i_1,...,i_\nu$-th coordinate
 is given by (\ref{2.25}).
 \end{proposition}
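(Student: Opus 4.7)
The plan is to deduce Proposition \ref{prop2.4} as a direct translation of the Strassen-type functional LIL of Baldi for iterated It\^o integrals of a standard Brownian motion, using the linear representation (\ref{2.24}) that rewrites $\fW^{(\nu)}$ in terms of the standard $d$-dimensional Brownian motion $\bfW$ via $\cW=\vs^{1/2}\bfW$.

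First I would invoke Proposition 3.1 and Corollary 3.2 of \cite{Bal} applied to $\bfW$. These yield that the $\cC^{d^\nu}$-valued family indexed jointly by all $d^\nu$ multiindices
\[
Y^{j_1,\ldots,j_\nu}_\tau(t)=(\tau\ln\ln\tau)^{-\nu/2}\int_0^{t\tau}d\bfW_{j_\nu}(t_\nu)\int_0^{t_\nu}d\bfW_{j_{\nu-1}}(t_{\nu-1})\cdots\int_0^{t_2}d\bfW_{j_1}(t_1),\quad\tau\geq 4,
\]
is relatively compact in $\cC^{d^\nu}$, and its cluster set as $\tau\to\infty$ equals a.s. the image of the Strassen ball $K:=\{f\in\cH_d:\,\tfrac12|f|_1^2\leq T\}$ under the map that sends $f\in K$ to the tuple with $(j_1,\ldots,j_\nu)$-component $\int_0^tdf_{j_\nu}(t_\nu)\cdots\int_0^{t_2}df_{j_1}(t_1)$.

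Second, by (\ref{2.24}) each $X^{i_1,\ldots,i_\nu}_\tau$ is the fixed $\bbR$-linear combination
\[
X^{i_1,\ldots,i_\nu}_\tau(t)=\sum_{1\leq j_1,\ldots,j_\nu\leq d}a^{i_1,\ldots,i_\nu}_{j_1,\ldots,j_\nu}Y^{j_1,\ldots,j_\nu}_\tau(t),
\]
so the map $\Phi:\cC^{d^\nu}\to\cC^{d^\nu}$ sending the tuple $(Y^{j_1,\ldots,j_\nu}_\tau)$ to the tuple $(X^{i_1,\ldots,i_\nu}_\tau)$ is a bounded linear operator, hence continuous. Continuous images of relatively compact families are relatively compact, and the cluster set of $\Phi(Y_\tau)$ coincides with the $\Phi$-image of the cluster set of $Y_\tau$; that image is exactly the set of paths whose coordinates are given by (\ref{2.25}) as $f$ ranges over $K$. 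Composing with the continuous coordinate projection $\cC^{d^\nu}\to\cC^1$ yields the single-coordinate statement as well.

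The one point that requires care is that Baldi's conclusion must be available jointly over all $d^\nu$ multiindices $(j_1,\ldots,j_\nu)$ rather than only componentwise, since cluster sets do not in general behave well under addition and a componentwise statement would be insufficient to identify the cluster set of a nontrivial linear combination. This joint form is what \cite{Bal} supplies, because its approximation step represents all iterated integrals of $\bfW$ simultaneously as continuous functionals of the same underlying path, so a single $f\in K$ realizes all cluster values at once. Once this joint statement is granted, the continuous-image argument above completes the proof.
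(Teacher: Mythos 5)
Your proposal is correct and follows essentially the same route as the paper, which derives Proposition \ref{prop2.4} directly from Proposition 3.1 and Corollary 3.2 of \cite{Bal} via the linear representation (\ref{2.24}) of $\fW^{(\nu)}$ in terms of iterated integrals of the standard Brownian motion $\bfW$. Your added remarks on the continuity of the linear map and on needing the joint (rather than componentwise) cluster-set statement are sound elaborations of the same argument.
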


 Now combining (\ref{2.22}) and (\ref{2.23}), which will be proved in Section \ref{sec7}, with Proposition \ref{prop2.4} we obtain
 \begin{theorem}\label{thm2.5}
 Set
 \begin{eqnarray*}
&\cX_\tau^{(\nu)}(t)=(\tau\ln\ln\tau)^{-\nu/2}\Sig^{(\nu)}(t\tau),\, t\in[0,T],\,\tau>0\,\,\,\mbox{and}\\
&\cX^{i_1,...,i_\nu}_\tau(t)=(\tau\ln\ln\tau)^{-\nu/2}\Sig^{(i_1,...,i_\nu)}(t\tau).
\end{eqnarray*}
Then the assertions concerning the families $\{ X^{i_1,...,i_\nu}_\tau,\,\tau\geq 4\}$ and $\{ X_\tau^{(\nu)},\,\tau\geq 4\}$ from Proposition \ref{prop2.4}
hold true for the families $\{\cX^{i_1,...,i_\nu}_\tau,\,\tau\geq 4\}$ and $\{ \cX_\tau^{(\nu)},\,\tau\geq 4\}$, as well,  for all
tree setups of Theorems \ref{thm2.1}--\ref{thm2.3}.
 \end{theorem}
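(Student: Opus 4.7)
The plan is to reduce Theorem~\ref{thm2.5} to Proposition~\ref{prop2.4} via the triangle identity
\[
\cX_\tau^{(\nu)}(t) - X_\tau^{(\nu)}(t) = (\tau\ln\ln\tau)^{-\nu/2}\bigl(\Sig^{(\nu)}(t\tau)-\fW_\Gam^{(\nu)}(t\tau)\bigr) + (\tau\ln\ln\tau)^{-\nu/2}\bigl(\fW_\Gam^{(\nu)}(t\tau)-\fW^{(\nu)}(t\tau)\bigr)
\]
combined with the two estimates (\ref{2.22}) and (\ref{2.23}) stated just before the theorem. Once it is known that $\sup_{0\leq t\leq T}|\cX_\tau^{(\nu)}(t) - X_\tau^{(\nu)}(t)|\to 0$ almost surely, the sets of subsequential limits in $\cC^{d^\nu}$ of the families $\{\cX_\tau^{(\nu)}\}_{\tau\geq 4}$ and $\{X_\tau^{(\nu)}\}_{\tau\geq 4}$ coincide, and the desired relative compactness and limit-set description transfer immediately from Proposition~\ref{prop2.4}. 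The coordinatewise assertion for $\{\cX_\tau^{i_1,\ldots,i_\nu}\}$ follows by projection.

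First I would derive (\ref{2.22}) from (\ref{2.21}) by a simple rescaling. Given $\tau\geq 1$, set $N=\lceil\tau\rceil$; then each $t\in[0,T]$ satisfies $t\tau=sN$ with $s=t\tau/N\in[0,T]$, so
\[
\sup_{0\leq t\leq T}\bigl|\Sig^{(\nu)}(t\tau)-\fW_\Gam^{(\nu)}(t\tau)\bigr|\leq\sup_{0\leq s\leq T}\bigl|\Sig^{(\nu)}(sN)-\fW_\Gam^{(\nu)}(sN)\bigr|=O(N^{\nu/2-\ve})=O(\tau^{\nu/2-\ve})
\]
almost surely by (\ref{2.21}). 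Dividing by $(\tau\ln\ln\tau)^{\nu/2}$ gives $O(\tau^{-\ve}(\ln\ln\tau)^{-\nu/2})\to 0$, so the first piece of the decomposition above vanishes a.s. uniformly in $t$.

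The substantial step is (\ref{2.23}), which I would prove by induction on $\nu$ for $D^{(\nu)}:=\fW_\Gam^{(\nu)}-\fW^{(\nu)}$. The base cases are immediate: $D^{(1)}\equiv 0$, and $D^{(2)}(t)=t\Gam$ gives a ratio $O((\ln\ln\tau)^{-1})\to 0$. For $\nu\geq 3$, subtracting the $\Gam=0$ version of the recursion (\ref{rec2}) from that for $\fW_\Gam^{(\nu)}$ yields
\[
D^{(\nu)}(0,t) = \int_0^t D^{(\nu-1)}(0,v)\otimes d\cW(v) + \int_0^t \fW_\Gam^{(\nu-2)}(0,v)\otimes\Gam\,dv.
\]
The drift term contains only $\nu-2$ layers of Brownian integration, so combining the iterated-Wiener bound $|\fW^{(\nu-2)}(v)|=O((v\ln\ln v)^{(\nu-2)/2})$ a.s. from Proposition~\ref{prop2.4} with the induction hypothesis for $D^{(\nu-2)}$ gives, uniformly in $t\in[0,T]$, a bound of order $\tau\cdot\tau^{(\nu-2)/2}(\ln\ln\tau)^{(\nu-2)/2} = \tau^{\nu/2}(\ln\ln\tau)^{(\nu-2)/2}$, which is $o((\tau\ln\ln\tau)^{\nu/2})$. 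For the stochastic integral term, the Burkholder--Davis--Gundy and Doob maximal inequalities applied to $D^{(\nu-1)}$ together with the induction hypothesis control its quadratic variation, and a Borel--Cantelli argument along a geometric sequence $\tau_n=\rho^n$ followed by a monotonicity sweep on each interval $[\tau_n,\tau_{n+1}]$ upgrades the resulting moment bound to the required almost sure supremum estimate.

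The main obstacle is the bookkeeping in (\ref{2.23}): every term in the expansion of $\fW_\Gam^{(\nu)}$ shares the polynomial order $\tau^{\nu/2}$, so the whole separation between $\fW_\Gam^{(\nu)}$ and $\fW^{(\nu)}$ sits in the logarithmic factors (each $\Gam$-insertion saves a $\sqrt{\ln\ln\tau}$), and these small savings have to be tracked uniformly in $t\in[0,T]$ rather than just pointwise. Once (\ref{2.22}) and (\ref{2.23}) are in place, the triangle identity above combined with Proposition~\ref{prop2.4} closes the proof.
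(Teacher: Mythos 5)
Your overall architecture is the same as the paper's: reduce Theorem \ref{thm2.5} to Proposition \ref{prop2.4} via (\ref{2.22}) and (\ref{2.23}), and prove (\ref{2.23}) by induction on the recursion $\fW_\Gam^{(\nu)}-\fW^{(\nu)}=\int_0^\cdot(\fW_\Gam^{(\nu-1)}-\fW^{(\nu-1)})\otimes d\cW+\int_0^\cdot\fW_\Gam^{(\nu-2)}\otimes\Gam\,ds$. Your derivation of (\ref{2.22}) by taking $N=\lceil\tau\rceil$ and absorbing $[0,T\tau]$ into $[0,TN]$ is correct and in fact shorter than the paper's Section \ref{subsec7.1}, which instead compares $\Sig^{(\nu)}(t\tau)$ with $\Sig^{(\nu)}(t[\tau])$ and $\fW_\Gam^{(\nu)}(t\tau)$ with $\fW_\Gam^{(\nu)}(t[\tau])$ through the moment estimates (\ref{7.2})--(\ref{7.9}). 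Your treatment of the drift term in (\ref{2.23}) also matches the paper's (\ref{7.14}).

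The gap is in your handling of the stochastic integral term $\int_0^{t\tau}(\fW_\Gam^{(\nu-1)}-\fW^{(\nu-1)})\otimes d\cW$. The induction hypothesis gives only a \emph{pathwise} bound $\sup_{s\leq T\tau}|\fW_\Gam^{(\nu-1)}(s)-\fW^{(\nu-1)}(s)|\leq C(\om)\tau^{(\nu-1)/2}(\ln\ln\tau)^{(\nu-1)/2-1}$ with a random constant, so Burkholder--Davis--Gundy cannot convert it into a moment bound that retains the $\ln\ln$ savings; the only moment bounds available are the purely polynomial ones of type (\ref{7.6}), namely $E\sup_t|\int_0^{t\tau}\cdots|^{2M}\leq C\tau^{\nu M}$. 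Chebyshev against a threshold of order $\tau^{\nu/2}(\ln\ln\tau)^{\nu/2-1}$ (or even $\ve(\tau\ln\ln\tau)^{\nu/2}$) then yields tail probabilities of order $(\ln\ln\tau)^{-a}$, and along a geometric sequence $\tau_n=\rho^n$ these behave like $(\ln n)^{-a}$, which is \emph{not} summable for any $a>0$ and any choice of $M$; your Borel--Cantelli step therefore fails. The gain you must capture is only logarithmic, which is exactly the regime where polynomial moment bounds are powerless and one needs the sub-Gaussian exponential martingale inequality conditioned on the pathwise quadratic-variation bound (\ref{7.16}) --- equivalently, the law of iterated logarithm for stochastic integrals, which is what the paper invokes from \cite{Wan} in (\ref{7.15}) and (\ref{7.17}) (a Dubins--Schwarz time change plus the classical LIL would also do). Replacing your ``moment bound $+$ Chebyshev $+$ Borel--Cantelli'' step by that tool repairs the argument; the rest of your proposal then closes as claimed.
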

\begin{remark}\label{rem2.6} Combining results from \cite{LQZ} (see also \cite{BBAK}) with Theorems \ref{thm2.1}--\ref{thm2.3} it is possible to
extend the above law of iterated logarithm to the one valid in the $p$-variation metric.
\end{remark}

\begin{remark}\label{rem2.7}
Theorems \ref{thm2.1}--\ref{thm2.3} can be extended with essentially the same proof to the setup where in place of one discrete
or continuous time process $\xi(k),\, k\in\bbZ$ or $\xi(t),\, t\in\bbR$ we have sequences of $\bbR^d$-valued jointly stationary
processes $\xi^{(i)}(k),\, k\in\bbZ$ or $\xi^{(i)}(t),\, t\in\bbR,\, i=1,2,...$ all satisfying the conditions of the corresponding
Theorems \ref{thm2.1}--\ref{thm2.3} with respect to the same family of $\sig$-algebras $\cF_{mn}$ or $\cF_{st}$. In this setup we
will have iterated sums and integrals of the form
\[
\bbS_N^{(\nu)}(s,t)=N^{-\nu/2}\sum_{[sN]\leq k_1<...<k_\nu<[tN]}\xi^{(1)}(k_1)\otimes\xi^{(2)}(k_2)\otimes\cdots\otimes\xi^{(\nu)}(k_\nu)
\]
and
\[
\bbS_N^{(\nu)}(s,t)=N^{-\nu/2}\int_{sN\leq u_1\leq...\leq u_\nu<tN}\xi^{(1)}(u_1)\otimes\xi^{(2)}(u_2)\otimes\cdots\otimes\xi^{(\nu)}(u_\nu)du_1...du_\nu.
\]
The limiting processes $\bbW_N^{(\nu)}$ should be constructed now not by one Brownian motion $W_N$ having a covariance matrix $\vs$ but by
a sequence of Brownian motions $W_N^{(i)},\, i=1,2,...$ with covariance matrices $\vs^{(i)}$ so that $\bbW_N^{(1)}=W_N^{(1)}$,
\[
\bbW_N^{(2)}(s,t)=\int_s^tW_N^{(1)}(s,v)\otimes dW_N^{(2)}(v)+(t-s)\Gam^{(2)},
\]
where $\Gam^{(i)}=\sum_{l=1}^\infty E(\xi^{(i-1)}(0)\otimes\xi^{(i)}(l))$, and recursively,
\[
\bbW_N^{(\nu)}(s,t)=\int_s^tW_N^{(\nu-1)}(s,v)\otimes dW_N^{(\nu)}(v)+\int_s^t\bbW_N^{(\nu-2)}(s,v)\otimes\Gam^{(\nu)}dv.
\]
Theorem \ref{thm2.5} extends to this setup, as well, with corresponding modifications in the definition of the limiting compact set.
In particular, this will work when the processes $\xi^{(i)}$ are constructed by one appropriate discrete or continuous time
dynamical system $F^n$ or $F^t$ but with different functions $g^{(i)}$, i.e. $\xi^{(i)}(n)=g^{(i)}\circ F^n$ or $\xi^{(i)}(t)
=g^{(i)}\circ F^t$.
\end{remark}

\subsection{An almost sure central limit theorem}\label{subsec2.5}

Employing a slight modification of the method from \cite{LP} we will derive as a corollary of Theorems \ref{thm2.1}--\ref{thm2.3} the following almost sure central limit theorem for iterated sums and integrals
 which was derived earlier for usual sums in \cite{Bro}. Namely we have,
 
\begin{theorem}\label{thm2.8} Let $\bbS_N^{(\nu)}=\bbS_N^{(\nu)}(t,\om)$ and $\bbW_N^{(\nu)}=\bbW_N^{(\nu)}(t,\om)$,
$t\in[0,T],\,\om\in\Om$ be as in one of Theorems \ref{thm2.1}--\ref{thm2.3} above. Assume that $\bbS_N^{(\nu)}(t,\om)$ appearing in Theorems \ref{thm2.2} and \ref{thm2.3} are continuous in $t$ for $P$-almost all $\om$, which holds true when, for instance, the corresponding stationary process $\xi$ is almost surely bounded. Recall, that $\bbS_N^{(\nu)}(t,\om)$ 
from Theorem \ref{thm2.1} is piecewise constant in $t$ with jumps at $t=\frac kN,\, k=0,1,...$ which is also sufficient for 
our arguments. Denote by $\cH_T(\bbR^{d\otimes\nu})$ the set of all paths $\gam(t),\, t\in[0,T],\,\gam(0)=0=(0,...,0)$
on the $\nu$-times tensor product $\bbR^{d\otimes\nu}=\bbR^d\otimes\cdots\otimes\bbR^d$, which is the $\nu d$-dimensional
space, such that $\gam(t)$ may have discontinuities only at the points $t=\frac kN,\, k=0,1,...,N,\, N=1,2,3,...$. Then
for $P$-almost all $\om$ the measures
\[
(\log N)^{-1}\sum_{1\le n\leq N}n^{-1}\del(\bbS_n^{(\nu)}(\cdot,\om))
\]
on $\cH_T(\bbR^{d\otimes\nu})$, where $\del(x)$ is the unit mass at $x\in\cH_T(\bbR^{d\otimes\nu})$, converge weakly 
as $N\to\infty$ to the distribution of $\bbW_1^{(\nu)}$.
\end{theorem}

\section{Auxiliary estimates}\label{sec3}\setcounter{equation}{0}
\subsection{General estimates}\label{susec3.1}

We will need the following estimates which were proved in \cite{Ki23} as Lemma 3.2 and under more restricted assumptions in \cite{FK}, Lemma 3.4.
\begin{lemma}\label{lem3.2}
Let $\eta(k),\,\zeta_k(l),\, k=0,1,...,\, l=0,...,k$ be sequences of random variables on a probability space
$(\Om,\cF,P)$ such that for all $k,l,n\geq 0$ and some $L,M\geq 1$ and $K=\max(2L,4M)$ satisfying (\ref{2.4}),
\begin{eqnarray}\label{3.2}
&\quad\|\eta(k)-E(\eta(k)|\cF_{k-n,k+n})\|_{K},\,\|\zeta_k(l)-E(\zeta_k(l)|\cF_{l-n,l+n})\|_{K}\\
&\leq\be(K,n),\,\,\|\eta(k)\|_{K},\,\|\zeta_k(l)\|_{K}\leq\gam_{K}<\infty\,\,\mbox{and}\,\,E\eta(k)=E\zeta_k(l)=0,\,\forall k,l\nonumber
\end{eqnarray}
where $\be(K,\cdot)$ and $\vp_{L,4M}(\cdot)$ satisfy (\ref{2.4}) and the $\sig$-algebras $\cF_{kl}$ are the same as in
Section \ref{sec2}. Then for any $N\geq 1$,
\begin{equation}\label{3.3}
E\max_{1\leq n\leq N}(\sum_{k=0}^n\eta(k))^{4M}\leq C^{\eta}(M)N^{2M}
\end{equation}
and
\begin{eqnarray}\label{3.4}
&E\max_{m\leq n\leq N}\big(\sum_{k=m}^n\sum_{j=\ell(k)}^{k-1}(\eta(k)\zeta_k(j)-E(\eta(k)\zeta_k(j)))\big)^{2M}\\
&\leq C^{\eta,\zeta}(M)(N-m)^{M}\max_{m\leq k\leq N}(k-\ell(k))^{M}\nonumber
\end{eqnarray}
where $0\leq\ell(k)<k$ is an integer valued function (may be constant) and $C^{\eta,\zeta}(M)>0$ depends only on
$\be,\gam,\vp$ and $M$ but it does not depend on $N,m,\ell$ or on the sequences $\eta$ and $\zeta$ themselves.
\end{lemma}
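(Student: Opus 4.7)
My plan is to reduce both estimates to a Rosenthal-type moment inequality for partial sums of centered weakly dependent random variables, using the approximation rate $\beta(K,\cdot)$ to replace $\eta(k),\zeta_k(j)$ by their local versions $\eta^{(n)}(k)=E(\eta(k)|\cF_{k-n,k+n})$ and $\zeta_k^{(n)}(j)=E(\zeta_k(j)|\cF_{j-n,j+n})$. By (\ref{3.2}), the $L^K$ error of each such replacement is at most $\beta(K,n)$, and choosing $n=n(N)$ growing like a slow power of $\log N$, the summability condition in (\ref{2.4}) ensures that the cumulative contribution of these errors to the $2M$-th or $4M$-th moment is of strictly smaller order than the target bounds. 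After this reduction each summand is measurable with respect to a $\sigma$-algebra of bounded radius around its index, so the coefficients $\varpi_{L,4M}(\cdot)$ control the remaining cross-dependence.

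For (\ref{3.3}) I would then invoke the classical moment bound for stationary weakly dependent sequences: a block decomposition into long blocks separated by short gaps, together with H\"older's inequality and the definition (\ref{2.1}) of $\varpi_{L,4M}$, lets one replace dependent block-sums by independent copies up to an error summable under (\ref{2.4}), producing $E|\sum_{k=0}^n\eta^{(n)}(k)|^{4M}\leq C(M)n^{2M}$; a Serfling-style maximal inequality upgrades this partial-sum estimate to the maximum over $n\leq N$ at the cost of an absolute constant. For (\ref{3.4}) I would treat $Z_k=\sum_{j=\ell(k)}^{k-1}(\eta(k)\zeta_k(j)-E(\eta(k)\zeta_k(j)))$ as a new centered weakly dependent sequence indexed by $k$. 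For fixed $k$, conditioning on $\eta(k)$ and applying the same Rosenthal-type bound to the centered inner sum in $j$, together with Cauchy--Schwarz to absorb $\eta(k)$ as a multiplier with $\|\eta(k)\|_{4M}\leq\gamma_K$, gives $\|Z_k\|_{2M}^{2M}\leq C(M)(k-\ell(k))^M$. A second application of the moment inequality to the outer sum $\sum_{k=m}^n Z_k$ then produces the factor $(N-m)^M$, and a further maximal-inequality argument yields the supremum over $n$.

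The main obstacle will be the careful bookkeeping of the dependence structure of $\{Z_k\}$ in (\ref{3.4}): this new sequence inherits its mixing behaviour from both $\eta$ and $\zeta_k$, and the effective $\sigma$-algebra carrying $Z_k$ widens with $k-\ell(k)$, so the effective mixing coefficient for $\{Z_k\}$ at lag $r$ is not $\varpi_{L,4M}(r)$ but a shifted version. Showing that this shift does not destroy summability is precisely where the linear factor $l$ inside $\sum_{l}l(\sqrt{\sup_{m\geq l}\beta(K,m)}+\varpi_{L,4M}(l))$ from (\ref{2.4}) is used, absorbing the double indexing introduced by the product structure. A final delicate point is that the centering $E(\eta(k)\zeta_k(j))$ in (\ref{3.4}) cannot be dispensed with: without it, $\sum_j E(\eta(k)\zeta_k(j))$ would contribute a term of size $k-\ell(k)$ rather than $(k-\ell(k))^{1/2}$, and the stated bound would fail.
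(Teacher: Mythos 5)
The paper itself contains no proof of Lemma \ref{lem3.2} --- it is imported verbatim from the companion paper \cite{Ki23}, where the proof goes by expanding the $4M$-th (resp.\ $2M$-th) moment into a multiple sum over index tuples and estimating each summand through the gaps between the sorted indices --- so the comparison below is with that argument. Your toolkit (localization via $\be(K,\cdot)$, decorrelation via $\varpi_{L,4M}$, Rosenthal-type bounds, a Serfling-type maximal inequality) is the right one, and the step $\|Z_k\|_{2M}\le C(k-\ell(k))^{1/2}$ via H\"older is fine. But the localization step as you describe it fails: condition (\ref{2.4}) forces only \emph{polynomial} decay of the approximation rate, $\be(K,n)=O(n^{-4})$ (this is exactly (\ref{3.16})), so replacing each $\eta(k)$ by $E(\eta(k)\mid\cF_{k-n(N),k+n(N)})$ with $n(N)$ a power of $\log N$ leaves a cumulative $L^{4M}$ error of order $N(\log N)^{-c}$, which swamps the target $N^{1/2}$ in (\ref{3.3}). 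A radius $n(N)\gtrsim N^{1/8}$ would be needed, and then the localized summands sit on $\sig$-algebras so wide that $\varpi_{L,4M}$ gives no decorrelation at lags below $N^{1/8}$. The standard repair is not one fixed radius but the telescoping decomposition of $\eta(k)$ over shells $E(\eta(k)\mid\cF_{k-r,k+r})-E(\eta(k)\mid\cF_{k-r+1,k+r-1})$, $r\ge1$, with the radius matched to the gap between indices inside the expanded moment; the weight $l$ in (\ref{2.4}) is what makes the resulting double sum over gaps converge.

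The more serious gap is in (\ref{3.4}). The block $Z_k$ depends on $\zeta_k(j)$ for all $j\in[\ell(k),k)$, so it is carried by a $\sig$-algebra of width $k-\ell(k)$, and nothing in the lemma bounds this width --- in the applications (Section \ref{subsec4.2}) it is of order $N^{1-\ka}$ or even $N$. Hence $Z_k$ and $Z_{k'}$ can overlap in support for every pair $k<k'$, the sequence $\{Z_k\}$ admits no uniform mixing rate, and your proposed remedy --- absorbing a ``shifted'' mixing coefficient into the weight $l$ of $\sum_l l(\cdots)$ in (\ref{2.4}) --- does not work, because the shift is of order $N$ rather than of the order of the summation variable, so the shifted coefficient is never evaluated in its decaying range. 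The estimate (\ref{3.4}) is true not because $\{Z_k\}$ mixes but because of the product structure of the individual summands: one expands the $2M$-th power into a sum over $2M$ pairs $(k_i,j_i)$, uses that $\eta(k_i)$ is localized near $k_i$ and $\zeta_{k_i}(j_i)$ near $j_i$ \emph{separately}, and counts the configurations in which no index is isolated; that count is what yields one factor $N-m$ and one factor $\max_k(k-\ell(k))$ per matched pair, i.e.\ $(N-m)^M\max_k(k-\ell(k))^M$. Treating $Z_k$ as an opaque block discards exactly this structure. (A minor further slip: by mixing, $\sum_jE(\eta(k)\zeta_k(j))=O(1)$ for each $k$, not $O(k-\ell(k))$; the centering is still indispensable, but because the uncentered outer sum would be of order $N-m$, which can exceed $((N-m)\max_k(k-\ell(k)))^{1/2}$.)
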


In the estimates for variational norms we will use the following extended version of the Kolmogorov theorem on the
 H\" older continuity of sample paths which was proved in \cite{Ki23}. For $\nu=2$ this result was proved in Theorem 3.1 of \cite{FH}.

\begin{theorem}\label{hoelder}
For $1\leq\ell<\infty$ let $\bbX^{(\nu)}=\bbX^{(\nu)}(s,t),\,\nu=1,2,...,\ell;\, s\leq t$ be a two parameter
stochastic process which is a multiplicative functional in the sense of \cite{Lyo} and \cite{LQ}, i.e.
$\bbX^{(1)}(s,t)=\bbX^{(1)}(0,t)-\bbX^{(1)}(0,s)$ and for any $\nu=2,...,\ell$ and $0\leq s\leq u\leq t\leq T$
the Chen relations,
\begin{equation}\label{ho1}
\bbX^{(\nu)}(s,t)=\bbX^{(\nu)}(s,u)+\sum_{k=1}^{\nu-1}\bbX^{(k)}(s,u)\otimes\bbX^{(\nu-k)}(u,t)+\bbX^{(\nu)}(u,t)
\end{equation}
hold true,
where $\bbX^{(1)}(s,t)$ is a $d$-dimensional vector (or a vector in a Banach space) and $\otimes$ is a tensor
product in the corresponding space. Let $\bfM\geq\ell$ be an integer, $\be>1/\bfM$ and assume that whenever $0\leq s\leq t
\leq T<\infty$ and $1\leq\nu\leq\ell$,
\begin{equation}\label{ho2}
E\|\bbX^{(\nu)}(s,t)\|^\bfM\leq\bbC_{\ell,\bfM}|t-s|^{\nu\bfM\be}
\end{equation}
for some constants $\bbC_{\ell,\bfM}<\infty$, where $\|\cdot\|$ is the norm in a corresponding space satisfying
$\| a\otimes b\|\leq \| a\|\| b\|$. Then for all $\al\in[0,\be-\frac 1\bfM)$ there exists a modification of
$(\bbX^{(\nu)},\,\nu=1,...,\ell)$, which will have the same notation, and random variables $\bbK_\al^{(\nu)},
\,\nu=1,...,\ell$ such that
\begin{equation}\label{ho3}
E(\bbK_\al^{(\nu)})^{\bfM/\nu}<\infty\quad\mbox{and}\quad\|\bbX^{(\nu)}(s,t)\|\leq\bbK_\al^{(\nu)}|t-s|^{\nu\al}
\quad\mbox{a.s.}
\end{equation}
\end{theorem}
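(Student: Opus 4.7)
My plan is to prove this by induction on $\nu$, combining the Kolmogorov--Chentsov argument on dyadic subdivisions with the multiplicative structure encoded in Chen's relation (\ref{ho1}). Fix the dyadic grid $t_k^n=kT2^{-n}$. Markov's inequality and the moment bound (\ref{ho2}) give, for each $\nu\leq\ell$,
\begin{equation*}
P\Bigl(\max_{0\leq k<2^n}\|\bbX^{(\nu)}(t_k^n,t_{k+1}^n)\|>2^{-n\nu\al}\Bigr)\leq 2^n\bbC_{\ell,\bfM}(T2^{-n})^{\nu\bfM\be}2^{n\nu\bfM\al}=C\,2^{-n(\nu\bfM(\be-\al)-1)},
\end{equation*}
which is summable in $n$ since $\al<\be-1/\bfM$. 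By Borel--Cantelli there is almost surely a finite $N_\om$ past which every level-$n$ dyadic increment satisfies $\|\bbX^{(\nu)}(t_k^n,t_{k+1}^n)\|\leq 2^{-n\nu\al}$ for all $\nu\leq\ell$. The case $\nu=1$ is the classical Kolmogorov continuity theorem applied to $t\mapsto\bbX^{(1)}(0,t)$.

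\textbf{Inductive step.} Assuming the conclusion for $1,\dots,\nu-1$, pick $s<t$ and choose $n$ with $T2^{-n-1}<t-s\leq T2^{-n}$. Partitioning $[s,t]$ by nested dyadic endpoints at scales $\geq n$ and iterating (\ref{ho1}) expresses $\bbX^{(\nu)}(s,t)$ as a telescoping sum of dyadic block values $\bbX^{(\nu)}(t_k^m,t_{k+1}^m)$ for $m\geq n$, plus cross terms $\bbX^{(k)}(a,b)\otimes\bbX^{(\nu-k)}(b,c)$ with $1\leq k<\nu$. The block contribution is bounded by $\sum_{m\geq n}2\cdot 2^{-m\nu\al}\lesssim(t-s)^{\nu\al}$ via the dyadic estimate above, while each cross term is bounded by $\bbK_\al^{(k)}\bbK_\al^{(\nu-k)}(t-s)^{\nu\al}$ using $k\al+(\nu-k)\al=\nu\al$ and the inductive Hölder bounds. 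This yields $\|\bbX^{(\nu)}(s,t)\|\leq\bbK_\al^{(\nu)}(t-s)^{\nu\al}$ with an explicit formula for $\bbK_\al^{(\nu)}$. The required modification is defined as the a.s.\ limit of the dyadic approximations and identified with the original $\bbX^{(\nu)}(s,t)$ via continuity in probability (an immediate consequence of (\ref{ho2})).

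\textbf{Moment control and main obstacle.} To see $\bbK_\al^{(\nu)}\in L^{\bfM/\nu}$, I would apply Minkowski's inequality to the above decomposition. For the dyadic part,
\begin{equation*}
\Bigl\|\max_k\|\bbX^{(\nu)}(t_k^m,t_{k+1}^m)\|\Bigr\|_{\bfM}\leq(2^m\bbC_{\ell,\bfM})^{1/\bfM}(T2^{-m})^{\nu\be},
\end{equation*}
so the weighted series $\sum_m 2^{m\nu\al}\cdot(\cdot)$ converges in $L^\bfM\subset L^{\bfM/\nu}$ precisely when $\al<\be-1/\bfM$. For the cross part, the $\nu$-fold Hölder inequality $\|\bbK_\al^{(k)}\bbK_\al^{(\nu-k)}\|_{\bfM/\nu}\leq\|\bbK_\al^{(k)}\|_{\bfM/k}\|\bbK_\al^{(\nu-k)}\|_{\bfM/(\nu-k)}$ is finite by induction. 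The main obstacle is the dyadic telescoping of Chen's relation --- essentially Lyons' extension lemma --- where one must track nested applications of (\ref{ho1}) carefully so that the residual cross terms of lower order combine with the dyadic blocks to produce exactly the exponent $\nu\al$. The scaling $\bfM/\nu$ on the Hölder constant is natural (a $\nu$-fold product of first-level Hölder norms sits in $L^{\bfM/\nu}$ by $\nu$-fold Hölder), but this scaling has to be maintained consistently at every step of the induction.
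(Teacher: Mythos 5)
The paper does not prove Theorem \ref{hoelder} here at all --- it is quoted from the companion paper \cite{Ki23} --- but your argument is the standard one for this result: induction on the level $\nu$, dyadic partitions with a union bound and Markov's inequality on each scale, Chen's relation to telescope a general interval into at most two dyadic blocks per scale plus lower-level cross terms, and the $\nu$-fold H\"older inequality to place $\bbK_\al^{(\nu)}$ in $L^{\bfM/\nu}$, which is exactly the Kolmogorov--Chentsov extension used in \cite{Ki23} and in the rough-paths literature it draws on. Your sketch is correct (the only imprecision is the claim that the weighted series converges ``precisely when'' $\al<\be-1/\bfM$, whereas level $\nu$ only needs $\al<\be-1/(\nu\bfM)$; the stated hypothesis is of course sufficient), so no substantive comparison is needed.
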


In the discrete time case we will use the following corollary from the above theorem also proved in \cite{Ki23}.
A weaker statement under more restricted conditions was proved in Lemma 4.2 of \cite{FK}. See also Proposition 6.17
in \cite{FZ}.
\begin{proposition}\label{hoelder2}
For each integer $N\geq 1$ let $(X_N,\bbX_N)=(X_N(s,t),\,\bbX_N(s,t))$ be a pair of two parameter processes
in $\bbR^d$ and $\bbR^d\otimes\bbR^d$, respectively, defined for each $s=k/N$ and $t=l/N$ where $0\leq k\leq l
\leq TN$, $T>0$ and $X_N(s,s)=\bbX_N(s,s)=0$. Assume that the Chen relation
\begin{equation}\label{hoe1}
\bbX_N(\frac kN,\frac mN)=\bbX_N(\frac kN,\frac lN)+X_N(\frac kN,\frac lN)\otimes X_N(\frac lN,\frac mN)
+\bbX_N(\frac lN,\frac mN)
\end{equation}
and $X_N(\frac kN,\frac lN)=X_N(0,\frac lN)-X_N(0,\frac kN)$ holds true for any $0\leq k\leq l\leq m\leq TN$.
Suppose that for all $0\leq k\leq l\leq NT$ and some $\bfM\geq 1$, $\ka>\frac 1{\bfM}$, $C(\bfM)>0$ and
$\bbC(\bfM)>0$ (which do not depend on $k,l$ and $N$),
\begin{equation}\label{hoe2}
E|X_N(\frac kN,\frac lN)|^{\bfM}\leq C(\bfM)|\frac {l-k}N|^{\bfM\ka}\,\,\mbox{and}\,\,
E|\bbX_N(\frac kN,\frac lN)|^{\bfM}\leq \bbC(\bfM)|\frac {l-k}N|^{2\bfM\ka}.
\end{equation}
Then for any $\be\in(\frac 1{\bfM},\ka)$ there exist random variables $C_{\ka,\be,N}>0$ and
$\bbC_{\ka,\be,N}>0$ such that
\begin{equation}\label{hoe3}
|X_N(\frac kN,\frac lN)|\leq C_{\ka,\be,N}|\frac {l-k}N|^{\ka-\be}\,\,\mbox{and}\,\,
|\bbX_N(\frac kN,\frac lN)|\leq \bbC_{\ka,\be,N}|\frac {l-k}N|^{2(\ka-\be)}\,\,\mbox{a.s.}
\end{equation}
and
\begin{equation}\label{hoe4}
EC^{\bfM}_{\ka,\be,N}\leq K_{\ka,\be}(\bfM)<\infty\,\,\,\mbox{and}\,\,\, E\bbC^{\bfM/2}_{\ka,\be,N}\leq\bbK_{\ka,\be}(\bfM)<\infty
\end{equation}
where $K_{\ka,\be}(\bfM)>0$ and $\bbK_{\ka,\be}(\bfM)>0$ do not depend on $N$.
\end{proposition}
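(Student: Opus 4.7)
The plan is to establish this as a discrete, uniform-in-$N$ version of the multiplicative Kolmogorov theorem of Theorem \ref{hoelder}. Two natural routes are available: extend the grid-indexed pair $(X_N,\bbX_N)$ to a continuous-time multiplicative functional on $[0,T]$ and quote Theorem \ref{hoelder}, or run a direct dyadic chaining argument on the grid $\{k/N:0\le k\le NT\}$. I would take the direct route, since uniformity in $N$ is then transparent.

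For the first-level part, $X_N(s,t)=X_N(0,t)-X_N(0,s)$ is an additive increment, so the standard Kolmogorov dyadic argument applies once all times are restricted to the grid. Choose $J_N$ with $2^{J_N-1}\le NT<2^{J_N}$, and for each $0\le j\le J_N$ partition $[0,T]$ into $2^j$ dyadic pieces (snapping the endpoints to the grid). Using the first inequality in (\ref{hoe2}) together with Markov's inequality, the probability that any of the $2^j$ dyadic first-level increments exceeds a threshold $\la_j$ is at most $C(\bfM)T^{\bfM\ka}A^{-\bfM}2^{j(1-\bfM\be)}$ when $\la_j=A\cdot 2^{-j(\ka-\be)}$, and these tails sum over $j$ because $\bfM\be>1$. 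A chaining step then produces a random variable $C_{\ka,\be,N}$ satisfying the first half of (\ref{hoe3}) together with an $\bfM$-th moment bound of the form in (\ref{hoe4}), independent of $N$.

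For the second-level part, I would use the Chen relation (\ref{hoe1}) to reduce to dyadic pieces. Iterating (\ref{hoe1}) along a minimal dyadic decomposition of a grid interval $[k/N,l/N]$ expresses $\bbX_N(k/N,l/N)$ as a sum of $\bbX_N$-values on dyadic intervals plus cross terms of the form $X_N(\cdot,\cdot)\otimes X_N(\cdot,\cdot)$ across pairs of such intervals. The cross terms are controlled by the H\"older estimate for $X_N$ from the previous paragraph and contribute $O(|t-s|^{2(\ka-\be)})$ after summing the resulting geometric series. The dyadic $\bbX_N$-terms are controlled by rerunning the chaining argument with the second inequality in (\ref{hoe2}), now with thresholds of order $2^{-2j(\ka-\be)}$ and effective scaling exponent $2\ka$; the Markov tail sums are again summable uniformly in $N$. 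Combining these gives the second half of (\ref{hoe3}), and the natural moment count gives an $\bfM/2$-moment bound on $\bbC_{\ka,\be,N}$, matching the $\bfM/\nu$ pattern of Theorem \ref{hoelder}.

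The main obstacle is uniformity in $N$: the number of dyadic scales $J_N$ grows like $\log_2(NT)$, and one must ensure that the moment bounds on $C_{\ka,\be,N}$ and $\bbC_{\ka,\be,N}$ do not inherit this logarithmic growth. This is exactly what the gap $\ka-\be>0$ buys, since the geometric thresholds $\la_j\sim 2^{-j(\ka-\be)}$ make every Markov tail estimate summable independently of $J_N$, so the bounding constants $K_{\ka,\be}(\bfM)$ and $\bbK_{\ka,\be}(\bfM)$ depend only on $\ka,\be,\bfM$ and $T$. A secondary technical point is the bookkeeping for the Chen-based decomposition of $\bbX_N$; this mirrors the standard argument behind the Lyons extension theorem but is simplified here because only one extra level beyond $X_N$ is required.
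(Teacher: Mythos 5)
Your outline is correct, but it takes a different route from the paper. The paper itself contains no proof of Proposition \ref{hoelder2}: it is introduced as ``the following corollary from the above theorem also proved in \cite{Ki23}'', i.e.\ it is obtained there as a consequence of the continuous-parameter multiplicative Kolmogorov theorem (Theorem \ref{hoelder}), the point being that once one has a two-level multiplicative functional satisfying the moment bounds (\ref{ho2}), the H\"older estimates (\ref{ho3}) specialize to the grid with constants independent of $N$. You instead run the discrete dyadic chaining directly on the grid $\{k/N\}$, which is the more self-contained option and makes the uniformity in $N$ visible: the level-one Markov tails sum like $\sum_j 2^{j(1-\bfM\be)}$ and the level-two tails like $\sum_j 2^{j(1-2\bfM\be)}$, both convergent since $\bfM\be>1$ regardless of how many scales $J_N\sim\log_2(NT)$ are present, and the cross terms produced by iterating the Chen relation (\ref{hoe1}) are absorbed into $C_{\ka,\be,N}^2$, which is exactly why the second constant only admits an $\bfM/2$ moment as in (\ref{hoe4}). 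The trade-off is that the paper's route gets the statement for free from an already-proved general theorem (and extends verbatim to all levels $\nu\le\ell$), whereas your argument must redo the bookkeeping of the Lyons-type decomposition by hand; on the other hand your route avoids any interpolation of the grid processes to continuous time and keeps every estimate elementary. One point worth making explicit if you write this up: the chaining terminates at the scale $2^{-J_N}\approx 1/N$ where adjacent grid points are reached, so no increment below the grid mesh ever needs to be controlled, which is what makes the ``snapping to the grid'' harmless.
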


\subsection{The matrix $\vs$, characteristic functions and partition into blocks}\label{subsec3.2}

In the following lemma, which justifies the definition of the matrix $\vs$ and gives the necessary convergence estimates, was proved in \cite{Ki23}, as well.
Under more restricted conditions this assertion was proved as Lemma 3.4 in \cite{Ki22}.
\begin{lemma}\label{lem3.3}
For each $i,j=1,...,d$ the limit
\begin{equation}\label{3.12}
\vs_{ij}=\lim_{n\to\infty}\frac 1n\sum_{k=0}^n\sum_{m=0}^nE(\xi_i(k)\xi_j(m))
\end{equation}
exists and for any $m,n\geq 0$,
\begin{eqnarray}\label{3.13}
&|n\vs_{ij}-\sum_{k=m}^{m+n}\sum_{l=m}^{m+n}E(\xi_i(k)\xi_j(l))|\\
&\leq 6\sum_{k=0}^n\sum_{l=k+1}^\infty\big((\|\xi_i(0)\|_K+\|\xi_j(0)\|_K)\be(K,l)\nonumber\\
&+(\|\xi_i(0)\|_K\|\xi_j(0)\|_K)\vp_{L,4M}(l)\big).\nonumber
\end{eqnarray}
Similarly, the limit
\[
\Gam^{ij}=\lim_{n\to\infty}E\bbS_N(n)=\lim_{n\to\infty}\frac 1n\sum_{l=1}^n\sum_{m=0}^{l-1}E(\xi_i(m)\xi_j(l))=
\sum_{l=1}^\infty E((\xi_i(0)\xi_j(l))
\]
exists, in particular, $\vs_{ij}=E(\xi_i(0)\xi_j(0))+2\Gam^{ij}$, and
\begin{eqnarray*}
&|n\Gam^{ij}-\sum_{l=1}^{n}\sum_{m=0}^{l-1}E(\xi_i(m)\xi_j(l))|\\
&\leq 3\sum_{k=0}^n\sum_{l=k+1}^\infty\big((\|\xi_i(0)\|_K+\|\xi_j(0)\|_K)\be(K,l)+(\|\xi_i(0)\|_K\|\xi_j(0)\|_K)
\vp_{L,4M}(l)\big).\nonumber
\end{eqnarray*}
\end{lemma}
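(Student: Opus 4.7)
The plan is standard for covariance sums under weak dependence: establish absolute summability of covariances via a truncate-and-mix step, then extract the linear-in-$n$ growth by an Abel-type rearrangement.

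First, by stationarity set $\vs_{ij}(h)=E(\xi_i(0)\xi_j(h))$ for $h\in\bbZ$. The double sum is independent of $m$ (after reindexing $k\mapsto k-m$, $l\mapsto l-m$), and grouping terms by the difference $h=l-k$ yields
\[
\sum_{k,l=0}^{n}\vs_{ij}(l-k)=(n+1)\vs_{ij}(0)+\sum_{h=1}^{n}(n+1-h)\bigl(\vs_{ij}(h)+\vs_{ji}(h)\bigr).
\]
Existence of $\vs_{ij}$ and the quantitative bound both reduce to a pointwise estimate of $|\vs_{ij}(h)|$ together with its summability in $h$.

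Second, for the pointwise bound approximate $\xi_j(h)$ by a conditional expectation on a local $\sig$-algebra. For $h\geq 1$ and $\ell\sim h/2$, let $\eta_h=E(\xi_j(h)\mid\cF_{h-\ell,h+\ell})$. By H\"older's inequality with exponent $K$, stationarity, and the definition (\ref{2.3}) of $\be(K,\ell)$,
\[
|E(\xi_i(0)\xi_j(h))-E(\xi_i(0)\eta_h)|\leq\|\xi_i(0)\|_K\be(K,\ell).
\]
For the remainder, $\xi_i(0)$ is $\cF_{-\infty,0}$-measurable, $\eta_h$ is $\cF_{h-\ell,\infty}$-measurable, and both are centred; applying the defining inequality (\ref{2.1}) for $\varpi_{L,4M}$, with the exponent pair $(L,4M)$ accommodated by $K=\max(2L,4M)$, gives
\[
|E(\xi_i(0)\eta_h)|\leq\|\xi_i(0)\|_K\|\xi_j(0)\|_K\,\varpi_{L,4M}(h-\ell).
\]
Summability of $|\vs_{ij}(h)|$ in $h$ then follows from (\ref{2.4}) after a trivial change of summation variable.

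Third, the identity from the first paragraph yields
\[
n\vs_{ij}-\sum_{k,l=m}^{m+n}\vs_{ij}(l-k)=-\vs_{ij}(0)+\sum_{h=1}^{n}(h-1)\bigl(\vs_{ij}(h)+\vs_{ji}(h)\bigr)+n\sum_{h=n+1}^{\infty}\bigl(\vs_{ij}(h)+\vs_{ji}(h)\bigr),
\]
whose absolute value is bounded by $\sum_{h=1}^{\infty}\min(h,n+1)(|\vs_{ij}(h)|+|\vs_{ji}(h)|)$. Swapping the order of summation converts this into $\sum_{k=0}^{n}\sum_{h=k+1}^{\infty}(|\vs_{ij}(h)|+|\vs_{ji}(h)|)$, the same shape as in the statement. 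Plugging in the pointwise bound and accounting for the two sign choices of $h$ (contributing a factor $2$) together with three terms per covariance (two $\be$-contributions from approximating each side plus one $\varpi$-contribution from mixing) produces the constant $6$. The parallel assertion for $\Gam^{ij}$ is identical but restricted to $l>k$, which halves the symmetry factor and yields the stated constant $3$.

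The main obstacle is the bookkeeping of constants to recover precisely the factor $6$: this requires truncating symmetrically on both sides in the second step and ensuring that the change of variable in the final sum leaves $\be(K,l)$ and $\varpi_{L,4M}(l)$ appearing at argument $l$ exactly as in (\ref{2.4}), rather than at some fractional multiple of $l$.
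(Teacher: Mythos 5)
Your overall strategy --- group the double sum by the lag $h=l-k$, prove absolute summability of $\vs_{ij}(h)=E(\xi_i(0)\xi_j(h))$ by a truncate-and-mix estimate, and then recover the error term via the $\min(h,n+1)$ resummation --- is the standard route, and it is essentially what the companion paper \cite{Ki23} (to which the present paper defers for the proof of Lemma \ref{lem3.3}) does. The algebraic identities in your first and third steps are correct, except that the $-\vs_{ij}(0)$ term in your third display is silently dropped: it is not absorbed by $\sum_{h\geq 1}\min(h,n+1)(|\vs_{ij}(h)|+|\vs_{ji}(h)|)$ and must be accounted for separately in the constant.

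The genuine gap is in your second step. You assert that $\xi_i(0)$ is $\cF_{-\infty,0}$-measurable, but nothing in the setup of Section \ref{subsec2.1} makes $\xi$ adapted to the family $\{\cF_{mn}\}$; the entire purpose of the coefficient $\be(a,l)$ in (\ref{2.3}), with its two-sided conditioning on $\cF_{k-l,k+l}$, is that $\xi(k)$ is only \emph{approximately} measurable with respect to these $\sig$-algebras. You must therefore replace both factors, $\xi_i(0)$ by $E(\xi_i(0)\mid\cF_{-\ell,\ell})$ and $\xi_j(h)$ by $E(\xi_j(h)\mid\cF_{h-\ell,h+\ell})$, paying two $\be(K,\ell)$ terms, and the mixing coefficient is then applied across the gap $h-2\ell$, since the first factor is $\cF_{-\infty,\ell}$-measurable and the second is $\cF_{h-\ell,\infty}$-measurable. (You acknowledge the symmetric truncation in your closing paragraph, but the displayed estimate with gap $h-\ell$ is what you actually prove.) With $\ell=[h/3]$ this gives $|\vs_{ij}(h)|\leq(\|\xi_i(0)\|_K+\|\xi_j(0)\|_K)\be(K,[h/3])+\|\xi_i(0)\|_K\|\xi_j(0)\|_K\varpi_{L,4M}(h-2[h/3])$, so after resummation the coefficients appear at argument roughly $l/3$ rather than $l$. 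This is exactly the obstacle you flag and do not resolve: as written, your argument yields (\ref{3.13}) only with $\be$ and $\varpi_{L,4M}$ evaluated at a fractional multiple of $l$ (equivalently, with different numerical constants after re-indexing using monotonicity), which suffices for every later use in the paper but is not the literal statement with the constants $6$ and $3$. Finally, you do not address the identity $\vs_{ij}=E(\xi_i(0)\xi_j(0))+2\Gam^{ij}$; note that your own lag decomposition gives $\vs_{ij}=\vs_{ij}(0)+\Gam^{ij}+\Gam^{ji}$, which matches the stated form only when $\Gam$ is symmetric, a point worth making explicit.
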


Next, for each $n\geq 1$ and $w\in\bbR^d$ introduce the characteristic function
 \[
 f_n(w)=E\exp(i\langle w,\, n^{-1/2}\sum_{k=0}^{n-1}\xi(k)\rangle),\, w\in\bbR^d
 \]
 where $\langle\cdot,\cdot\rangle$ denotes the inner product. We will need the following estimate.
 \begin{lemma}\label{lem3.4}
 For any $n\geq 1$,
 \begin{equation}\label{3.15}
 |f_n(w)-\exp(-\frac 12\langle\vs w,\, w\rangle)|\leq C_fn^{-\wp}
 \end{equation}
 for all $w\in\bbR^d$ with $|w|\leq n^{\wp/2}$ where the matrix $\vs$ is given
 by (\ref{2.6}), we can take $\wp\leq\frac 1{20}$ and a constant $C_f>0$ does not depend on $n$.
 \end{lemma}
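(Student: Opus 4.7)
The plan is to prove Lemma \ref{lem3.4} by the classical Bernstein block–gap method adapted to the mixing/approximation coefficients \(\varpi_{L,4M}\) and \(\be(K,\cdot)\) used throughout the paper. First I would pass from \(\xi(k)\) to truncated variables \(\tilde\xi_l(k)=E(\xi(k)\,|\,\cF_{k-l,k+l})\) with \(l=l(n)\) to be chosen below; by (\ref{2.3})–(\ref{2.4}) and the Minkowski inequality, the contribution to \(n^{-1/2}\sum_{k<n}\xi(k)\) of the remainder is \(O(\sqrt{n}\,\be(K,l))\) in \(L^K\), so \(|f_n(w)-\tilde f_n(w)|\) is bounded by \(|w|\sqrt n\,\be(K,l)\) via the elementary bound \(|e^{iu}-e^{iv}|\le|u-v|\). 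With \(|w|\le n^{\wp/2}\) this is negligible provided \(l\) grows like a small power of \(n\).

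Next I would partition \(\{0,1,\dots,n-1\}\) into \(r\) alternating large blocks of length \(p\) and gap blocks of length \(q\ge 2l\), with \(n\asymp r(p+q)\), set \(U_j=n^{-1/2}\sum_{k\in\mathrm{block}_j}\tilde\xi_l(k)\), and denote by \(V\) the sum of contributions of the gap blocks. Using Lemma \ref{lem3.2} (with the \(\eta\) estimate (\ref{3.3})), \(E|V|^{2}=O(rq/n)=O(q/(p+q))\), so \(|f_n(w)-E\exp(i\langle w,\sum_j U_j\rangle)|\lesssim |w|\sqrt{q/(p+q)}\). Then the truncated blocks \(U_j\) are \(\cF_{a_j,b_j}\)-measurable with consecutive supports separated by \(q-2l\ge q/2\); applying the definition (\ref{2.1}) of \(\varpi_{\infty,1}\) iteratively (one block at a time, conditioning on the past \(\sig\)-algebra) yields
\[
\Big|E\exp\big(i\langle w,\textstyle\sum_j U_j\rangle\big)-\prod_j E\exp(i\langle w,U_j\rangle)\Big|\le r\,\varpi_{\infty,1}(q/2),
\]
which by (\ref{2.4}) is \(O(r\,\varpi_{L,4M}(q/2)^{1/(4M)})\) after Riesz–Thorin (\ref{2.9})–(\ref{2.10}).

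For each individual block I would Taylor–expand
\[
E e^{i\langle w,U_j\rangle}=1-\tfrac12 E\langle w,U_j\rangle^{2}+R_j,\qquad |R_j|\le \tfrac{1}{6}E|\langle w,U_j\rangle|^{3},
\]
estimate \(E|U_j|^{3}\le (E|U_j|^{4M})^{3/(4M)}\) by Lemma \ref{lem3.2} (so \(E|U_j|^{4M}=O((p/n)^{2M})\), i.e.\ \(E|U_j|^{3}=O((p/n)^{3/2})\)), and use Lemma \ref{lem3.3} to identify \(\sum_j E\langle w,U_j\rangle^{2}=\langle\vs w,w\rangle+O(|w|^{2}(\mbox{error of }(\ref{3.13})))\). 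Taking logarithms \(\log(1+z)=z+O(z^{2})\) of the product and collecting terms yields
\[
\Big|\prod_j Ee^{i\langle w,U_j\rangle}-e^{-\tfrac12\langle\vs w,w\rangle}\Big|\lesssim |w|^{3}\,r\,(p/n)^{3/2}+|w|^{2}\cdot(\mbox{tail of series in (\ref{3.13})}).
\]

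The main obstacle, as usual in this kind of argument, is to balance the parameters \(p,q,l\) so that every one of the error contributions—\(\sqrt n\,\be(K,l)\), \(\sqrt{q/p}\), \(r\,\varpi_{L,4M}(q/2)^{1/(4M)}\), \(|w|^{3}(p/n)^{1/2}\), and the tail in (\ref{3.13})—is simultaneously \(O(n^{-\wp})\) on the range \(|w|\le n^{\wp/2}\). Choosing, for instance, \(p\sim n^{1-3\wp}\), \(q\sim n^{1/2}\), and \(l\sim n^{1/2}/\log n\), the condition \(\wp\le 1/20\) provides the slack needed: \(|w|^{3}(p/n)^{1/2}\le n^{3\wp/2-(3\wp)/2}\), the gap-variance term is \(O(n^{-1/4})\), the mixing term is summable by (\ref{2.4}), and the approximation tail is negligible by the \(\sqrt{\be(K,m)}\) hypothesis in (\ref{2.4}). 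Since all constants are controlled uniformly in \(w\) within the allowed disc, this yields the claimed bound with a constant \(C_f\) independent of \(n\).
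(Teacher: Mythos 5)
Your overall strategy -- truncate via $E(\xi(k)|\cF_{k-l,k+l})$, split into Bernstein big blocks and small gaps, decouple the blocks using the mixing coefficient, Taylor-expand each block's characteristic function and identify the variance via Lemma \ref{lem3.3} -- is exactly the argument the paper relies on: the paper's own proof only derives the polynomial rates (\ref{3.16}) and the covariance inequality $|E(gh)-EgEh|\le\|g\|_\infty\|h\|_\infty\varpi_{L,4M}$ and then delegates the blocking scheme to Lemma 3.10 of \cite{Ki22}. So you are reconstructing the right proof. However, as written your parameter choice does not close the argument. With $p\sim n^{1-3\wp}$ the cubic (Taylor remainder) term is $|w|^3(p/n)^{1/2}\le n^{3\wp/2}\cdot n^{-3\wp/2}=n^{0}=1$ -- your own displayed exponent $3\wp/2-3\wp/2$ is zero -- so this term is only $O(1)$, not $O(n^{-\wp})$. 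You need $p\le n^{1-5\wp}$ for that term alone; taking, say, $p\sim n^{1-6\wp}$ repairs it, and one must then re-verify the other contributions (the gap term becomes $|w|\sqrt{q/p}\le n^{\wp/2+3\wp-1/4}$, which is $\le n^{-\wp}$ precisely because $\wp\le 1/20$ gives $9\wp/2\le 1/4-\wp$ room; the decoupling term becomes $r\varpi(q/2)\lesssim n^{6\wp-1}$ by (\ref{3.16})). This rebalancing is where the constraint $\wp\le\frac1{20}$ actually bites, and it is missing from your write-up.

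The second slip is the decoupling step. You bound the telescoped difference by $r\,\varpi_{\infty,1}(q/2)$ and then claim this is $O(r\,\varpi_{L,4M}(q/2)^{1/(4M)})$ ``after Riesz--Thorin.'' The interpolation inequalities (\ref{2.9})--(\ref{2.10}) go the other way (they bound general $\varpi_{b,a}$ by powers of $\al,\rho,\phi$), and in any case the bound you end up with is useless: with $r\sim n^{c\wp}$ blocks and $M$ large, $r\,\varpi_{L,4M}(q/2)^{1/(4M)}$ need not tend to zero, since raising a quantity $\le 2$ to the small power $1/(4M)$ only makes it larger. What saves you is much simpler: $\varpi_{b,a}$ is non-increasing in $b$ and non-decreasing in $a$, so $\varpi_{\infty,1}(m)\le\varpi_{L,4M}(m)$ directly (equivalently, use the covariance inequality stated in the paper's proof of Lemma \ref{lem3.4}), and then $\varpi_{L,4M}(q/2)\le C_1(q/2)^{-2}$ by (\ref{3.16}) makes the decoupling error $O(r/q^2)$, which is negligible. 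With these two corrections the proof is complete and coincides in substance with the one the paper imports from \cite{Ki22}.
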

  \begin{proof} First, observe that (\ref{2.4}) implies that for any $n\geq 1$,
 \begin{equation}\label{3.16}
 \be(K,n)\leq C_1n^{-4}\quad\mbox{and}\quad\vp_{L,4M}(n)\leq C_1n^{-2}
 \end{equation}
 where $C_1>0$ does not depend on $n$. Indeed, $\vp_{L,4M}(n)$ is nonnegative and does not increase
 in $n$. Hence, for some $C>0$ and any $n\geq 1$,
 \begin{eqnarray*}
 &\infty>C\geq\sum_{k=0}^\infty\sum_{l=k+1}^\infty\varpi_{L,4M}(l)\geq\sum_{k=0}^n\sum_{l=k+1}^n\varpi_{L,4M}(l)\\
 &\geq\varpi_{L,4M}(n)\sum_{k=0}^n(n-k)=\frac 12n(n+1)\varpi_{L,4M}(n),
 \end{eqnarray*}
 and so (\ref{2.4}) implies the second estimate in (\ref{3.16}). The same argument applied to $\sup_{l\geq n}\be(K,l)$
 yields the first estimate in (\ref{3.16}). Observe also that for any pair of $L^\infty$ functions $g$ and $h$ and a pair
 of $\sig$-algebras $\cG,\cH\subset\cF$ if $g$ is $\cG$-measurable and $h$ is $\cH$-measurable then
 \[
 |E(gh)-EgEh|=|E((E(g|\cH)-Eg)h)|\leq\| g\|_\infty\| h\|_\infty\vp_{L,4M}(\cH,\cG).
 \]
 Now relying on (\ref{3.16}) and the last observation, which replaces here Lemma 3.1 in \cite{Ki22} used in Lemma 3.10 there,
 the proof of the present lemma proceeds essentially in the same way as the proof of Lemma 3.10 in \cite{Ki22} replacing there
  the $\phi$-mixing coefficient $\phi(n)$ by $\vp_{L,4M}(n)$, the approximation coefficient $\rho(n)$ by $\be(K,n)$ and using
  here Lemmas \ref{lem3.2} and \ref{lem3.3} above in place of Lemmas 3.2 and 3.4 in \cite{Ki22}, respectively. Taking all these
  into account the details of the estimates can be easily reproduced from Lemma 3.10 of \cite{Ki22}.
 \end{proof}

  Next, we introduce blocks of high polynomial power length with gaps between
 them. Set $m_0=0$ and recursively $n_k=m_{k-1}+[k^\rho],\, m_k=n_k+[k^{\rho/4}],\, k=1,2,...$ where $\rho>0$ is
 big and will be chosen later on. Now we define sums
 \begin{eqnarray*}
 &Q_k=\sum_{m_{k-1}\leq j<n_k}E\big(\xi(j)|\cF_{j-\frac 13[(k-1)^{\rho/4}],j+\frac 13[(k-1)^{\rho/4}]}\big)\\
 &\mbox{and}\quad R_k=\sum_{n_k\leq j<m_k}\xi(j),\, k=1,2,...
 \end{eqnarray*}
where the first sums play the role of blocks while the second ones are gaps whose total contribution turns out to
be negligible for our purposes. Set also $\ell_N(t)=\max\{ k:\, m_k\leq Nt\}$ and $\ell_N=\ell_N(T)$. As in \cite{FK},
but under the present more general conditions, we obtain
\begin{lemma}\label{lem3.6} With probability one for all $N\geq 1$,
\begin{equation}\label{3.17}
\sup_{0\leq t\leq T}|S_N(t)-N^{-1/2}\sum_{1\leq k\leq\ell_N(t)}Q_k|=O(N^{-\del})
\end{equation}
provided $\del>0$ is small and $\rho>0$ is large enough.
\end{lemma}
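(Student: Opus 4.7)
The plan is to split $S_N(t)-N^{-1/2}\sum_{k=1}^{\ell_N(t)}Q_k$ into three error pieces and bound each in a suitable $L^K$ or $L^{4M}$ sense using the moment estimates in Lemma \ref{lem3.2}, then promote the bounds to an a.s.\ estimate by Markov's inequality and Borel--Cantelli along $N\in\bbN$. With $\tilde Q_k=\sum_{m_{k-1}\leq j<n_k}\xi(j)$ denoting the genuine block sum (without conditioning), the difference equals
\[
N^{-1/2}\Big[\sum_{k=1}^{\ell_N(t)}(\tilde Q_k-Q_k)+\sum_{k=1}^{\ell_N(t)}R_k+\sum_{j=m_{\ell_N(t)}}^{[Nt]-1}\xi(j)\Big],
\]
namely an in-block approximation error, a total gap contribution, and an end remainder from the unfinished block or gap at the right endpoint.

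For the approximation error, the first bound in (\ref{3.16}) gives $\be(K,l)\leq C_1 l^{-4}$; with $r_k=\frac 13[(k-1)^{\rho/4}]\sim k^{\rho/4}$, the per-index error satisfies $\|\xi(j)-E(\xi(j)|\cF_{j-r_k,j+r_k})\|_K\leq\be(K,r_k)\leq Ck^{-\rho}$, so the $L^K$ triangle inequality over a block of length $[k^\rho]$ yields $\|\tilde Q_k-Q_k\|_K\leq C$. Summing over $k$ and using monotonicity of the partial sums in $\ell_N(t)$, the supremum in $t$ is bounded in $L^K$ by $C\ell_N=O(N^{1/(\rho+1)})$. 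For the gap sums, (\ref{3.3}) applied inside each block of length $[k^{\rho/4}]$ gives $\|R_k\|_{4M}\leq Ck^{\rho/8}$, and the $L^{4M}$ triangle inequality together with monotonicity in $\ell$ controls the supremum by $C\ell_N^{\rho/8+1}$. For the end piece, the maximal form of (\ref{3.3}) on each segment $[m_\ell,m_{\ell+1})$ of length at most $C(\ell+1)^\rho$, followed by a union bound over $\ell\leq\ell_N$, produces an $L^{4M}$ supremum bound of order $C\ell_N^{\rho/2+1/(4M)}$.

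Normalizing by $\sqrt N$ and substituting $\ell_N\sim N^{1/(\rho+1)}$ converts these three estimates into polynomial decay rates $N^{-\del_i}$, each strictly positive for $\rho$ sufficiently large. A standard Markov plus Borel--Cantelli argument along $N\in\bbN$ then upgrades these moment bounds to the claimed a.s.\ estimate (\ref{3.17}), with $\del$ any positive number strictly less than $\min_i\del_i$ (after absorbing the usual $1/K$ and $1/(4M)$ Markov losses). The main obstacle is the joint calibration of parameters: the approximation and gap rates improve with $\rho$, but the end-piece rate, asymptotically $N^{-1/(2(\rho+1))}$, degrades as $\rho$ grows, so $\rho$, $M$, $K$ and $\del$ must be chosen together so that all three exponents remain simultaneously positive.
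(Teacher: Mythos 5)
Your proposal is correct and follows essentially the same route as the paper: the identical three-way decomposition into in-block approximation error, gap contribution, and end remainder, each controlled via Lemma \ref{lem3.2} and the polynomial decay of $\be(K,\cdot)$ from (\ref{3.16}), then Chebyshev and Borel--Cantelli. Your use of the $L^{4M}$ triangle inequality across blocks (rather than the paper's power-mean bookkeeping) yields the same asymptotic exponents, and you correctly identify the key calibration tension between $\rho$, $M$ and $\del$ coming from the end piece.
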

\begin{proof}
Denote the left hand side of (\ref{3.17}) by $I$, then
\[
I\leq\sup_{0\leq t\leq T}I_1(t)+\sup_{0\leq t\leq T}I_2(t)+\sup_{0\leq t\leq T}I_3(t).
\]
Here,
\begin{eqnarray*}
&I_1(t)=N^{-1/2}\big\vert\sum_{1\leq k\leq\ell_N(t)}\big(\sum_{m_{k-1}\leq j<n_k}(\xi(j)\\
&-E(\xi(j)|\cF_{j-\frac 13[(k-1)^{\rho/4}],j+\frac 13[(k-1)^{\rho/4}]})\big)\big\vert,
\end{eqnarray*}
\begin{eqnarray*}
&I_2(t)=N^{-1/2}\big\vert\sum_{1\leq k\leq\ell_N(t)}\sum_{n_{k}\leq j<m_k}\xi(j)\big\vert\\
&\mbox{and}\,\,\, I_3(t)=N^{-1/2}\big\vert\sum_{m_{\ell_N(t)}\leq j<[Nt]}\xi(j)\big\vert .
\end{eqnarray*}
Then
\begin{eqnarray*}
&E\sup_{0\leq t\leq T}I_1^{2M}(t)\leq N^{-M}\ell_N^{2M-1}\sum_{1\leq k\leq\ell_N}k^{(2M-1)\rho}\sum_{m_{k-1}\leq j<n_k}E|\xi(j)\\
&-E(\xi(j)|\cF_{j-\frac 13[(k-1)^{\rho/4}],j+\frac 13[(k-1)^{\rho/4}]})|^{2M}\\
&\leq N^{-M}\ell_N^{2M-1}\sum_{1\leq k\leq\ell_N}k^{(2M-1)\rho}\be^{2M}(K,\frac 13[(k-1)^{\rho/4}])\leq C_2N^{-M(1-\frac 2{\rho+1})}
\end{eqnarray*}
since
\begin{equation}\label{3.18}
(\frac {TN}2)^{\frac 1{\rho+1}}-2\leq\ell_N\leq (TN(\rho+1))^{\frac 1{\rho+1}}
\end{equation}
and by (\ref{3.16}),
\[
C_2=(T(\rho+1))^{\frac 1{\rho+1}}\max_{k\geq 1}(k^\rho\be(K,\frac 13[(k-1)^{\rho/4}]))<\infty.
\]
By the Chebyshev inequality
\begin{equation*}
P\{\sup_{0\leq t\leq T}I_1(t)>N^{-\del}\}\leq N^{2M\del}E\sup_{0\leq t\leq T}I_1^{2M}(t).
\end{equation*}
Choosing $\rho>0$ big and $\del>0$ small enough so that $1-2\del-\frac 2{\rho+1}>1/2$ and taking $M\geq 4$ we obtain that
the right hand side of this inequality is bounded by $N^{-2}$, and so by the Borel-Cantelli lemma
\begin{equation}\label{3.19}
\sup_{0\leq t\leq T}I_1(t)=O(N^{-\del})\quad\mbox{a.s.}
\end{equation}

Next, by Lemma \ref{lem3.2},
\begin{eqnarray*}
&E\sup_{0\leq t\leq T}I_2^{2M}(t)\leq N^{-M}\sum_{0\leq l\leq\ell_N}E\big\vert\sum_{1\leq k\leq l}\sum_{n_{k}\leq j<m_k}\xi(j)\big\vert^{2M}\\
&\leq C_3(M)N^{-M}\sum_{1\leq l\leq\ell_N}(\sum_{1\leq k\leq l}k^{\rho/4})^M\leq C_3(M)N^{-\frac 14(3-\frac 7{\rho+1})M}
\end{eqnarray*}
where $C_3(M)>0$ does not depend on $N$. By the Chebyshev inequality
\begin{equation*}
P\{\sup_{0\leq t\leq T}I_2(t)>N^{-\del}\}\leq N^{2M\del}E\sup_{0\leq t\leq T}I_2^{2M}(t).
\end{equation*}
Choosing $\rho>0$ big and $\del>0$ small enough so that $\frac 7{\rho+1}+8\del\leq 1$ and taking $M\geq 4$ we obtain that
the right hand side of this inequality is bounded by $N^{-2}$, and so by the Borel-Cantelli lemma
\begin{equation}\label{3.20}
\sup_{0\leq t\leq T}I_2(t)=O(N^{-\del})\quad\mbox{a.s.}
\end{equation}

Next,
\begin{eqnarray*}
&\sup_{0\leq t\leq T}I_3^{2M}(t)=N^{-M}\max_{1\leq k\leq\ell_N}\max_{m_k\leq j<m_{k+1}\wedge Nt}|\sum_{m_k\leq i
\leq j}\xi(i)|^{2M}\\
&\leq N^{-M}\sum_{1\leq k\leq\ell_N}\sum_{m_k\leq j<m_{k+1}}|\sum_{m_k\leq i\leq j}\xi(i)|^{2M}.\\
\end{eqnarray*}
By Lemma \ref{lem3.2},
\[
E|\sum_{m_k\leq i< j}\xi(i)|^{2M}\leq C_4(M)(j-m_k)^M,
\]
and so by (\ref{3.18}),
\[
E\sup_{0\leq t\leq T}I_3^{2M}(t)\leq C_4(M)N^{-M}\sum_{k=1}^{\ell_N}(m_{k+1}-m_k)^{M+1}\leq C_5(M)N^{-\frac M{\rho+1}+1}
\]
where $C_4(M)>0$ and $C_5(M)>0$ do not depend on $N$. By the Chebyshev inequality
\begin{equation*}
P\{\sup_{0\leq t\leq T}I_3(t)>N^{-\del}\}\leq N^{2M\del}E\sup_{0\leq t\leq T}I_3^{2M}(t).
\end{equation*}
Choosing $\del\leq\frac 1{4(\rho+1)}$ and $M\geq 12(\rho+1)$ we bound the right hand side of this inequality by $N^{-2}$ which
together with the Borel-Cantelli lemma yields that
\begin{equation}\label{3.21}
\sup_{0\leq t\leq T}I_3(t)=O(N^{-\del})\quad\mbox{a.s.}
\end{equation}
Finally, (\ref{3.17}) follows from (\ref{3.19})--(\ref{3.21}).
\end{proof}

Next, set
\begin{equation*}
\cG_k=\cF_{-\infty,n_k+\frac 13[k^{\rho/4}]},
\end{equation*}
and so $Q_k$ is $\cG_k$-measurable.
As a corollary we obtain
\begin{lemma}\label{lem3.7} For any $k\geq 1$,
\begin{eqnarray}\label{3.22}
&E|E(\exp(i\langle w,\,(n_k-m_{k-1})^{-1/2}Q_k\rangle |\cG_{k-1})-\exp(-\frac 12\langle\vs w,w\rangle)|\\
&\leq \vp_{L,4M}(\frac 13[(k-1)^{\rho/4}])+k^{\rho/2(1+\wp/2)}\be(K,\frac 13[(k-1)^{\rho/4}])+C_f[k^\rho]^{-\wp}\nonumber
 \end{eqnarray}
 for all $w\in\bbR^d$ with $|w|\leq(n_k-m_{k-1})^{\wp/2}$ where $C_f>0$ is the same as in (\ref{3.15}).
 \end{lemma}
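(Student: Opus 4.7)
The plan is to insert two intermediate quantities between $E(g_w(Q_k)\mid\cG_{k-1})$ and the Gaussian characteristic function, where $g_w(x)=\exp(i\langle w,(n_k-m_{k-1})^{-1/2}x\rangle)$, so that the three resulting errors correspond one-to-one with the three terms on the right-hand side of (\ref{3.22}). Concretely, let $\tilde Q_k=\sum_{m_{k-1}\le j<n_k}\xi(j)$ denote the block sum with the inner conditional expectations stripped off, and decompose
\[
E(g_w(Q_k)\mid\cG_{k-1})-e^{-\tfrac 12\langle\vs w,w\rangle}=A_1+A_2+A_3,
\]
with $A_1=E(g_w(Q_k)\mid\cG_{k-1})-Eg_w(Q_k)$, $A_2=Eg_w(Q_k)-Eg_w(\tilde Q_k)$ and $A_3=Eg_w(\tilde Q_k)-\exp(-\tfrac 12\langle\vs w,w\rangle)$. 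Taking $L^1$-norms and applying the triangle inequality reduces the lemma to controlling each piece separately.

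For $A_1$ I would observe that for $m_{k-1}\le j<n_k$ each summand $E(\xi(j)\mid\cF_{j-l,j+l})$ of $Q_k$, with $l=\tfrac 13[(k-1)^{\rho/4}]$, is measurable with respect to $\cF_{m_{k-1}-l,n_k+l}$, while $\cG_{k-1}\subset\cF_{-\infty,n_{k-1}+l}$, and the gap between these two $\sig$-algebras is $m_{k-1}-l-(n_{k-1}+l)=[(k-1)^{\rho/4}]-2l=\tfrac 13[(k-1)^{\rho/4}]$. Applying the definition (\ref{2.1}) of $\vp_{L,4M}$ together with (\ref{2.2}) to the real and imaginary parts of the bounded function $g_w(Q_k)$, whose $L^L$-norm is at most $1$, then yields $\|A_1\|_1\le\|A_1\|_{4M}\le\vp_{L,4M}(\tfrac 13[(k-1)^{\rho/4}])$ (modulo harmless numerical constants).

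For $A_2$ the Lipschitz bound $|e^{ix}-e^{iy}|\le|x-y|$ and the hypothesis $|w|\le(n_k-m_{k-1})^{\wp/2}=[k^\rho]^{\wp/2}$ give $|A_2|\le[k^\rho]^{(\wp-1)/2}\,E|Q_k-\tilde Q_k|$. Since the residuals $\xi(j)-E(\xi(j)\mid\cF_{j-l,j+l})$ have $L^K$-norm at most $\be(K,l)$ by (\ref{2.3}), combining Jensen's inequality with a Lemma \ref{lem3.2}-type $L^{2M}$ moment estimate applied to these centred residuals leads to the bound $k^{\rho/2(1+\wp/2)}\be(K,\tfrac 13[(k-1)^{\rho/4}])$. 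For $A_3$, stationarity gives $Eg_w(\tilde Q_k)=f_{[k^\rho]}(w)$ in the notation of Lemma \ref{lem3.4}, and since $|w|\le[k^\rho]^{\wp/2}$ is exactly the admissible range there, that lemma delivers $|A_3|\le C_f[k^\rho]^{-\wp}$.

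Summing the three estimates via the triangle inequality produces (\ref{3.22}). The only delicate points are the index bookkeeping that pins down the $\tfrac 13[(k-1)^{\rho/4}]$ gap between the support of $Q_k$ and $\cG_{k-1}$, and matching the exponent of $k$ in the $A_2$ bound to the power $\tfrac{\rho}{2}(1+\tfrac{\wp}{2})$ stated in the lemma; once these are verified, $A_1$, $A_2$ and $A_3$ are controlled respectively by mixing, by the approximation rate $\be$ and by the characteristic-function estimate of Lemma \ref{lem3.4}, so no new ideas beyond the tools already developed are required.
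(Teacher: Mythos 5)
Your proposal is correct and follows essentially the same route as the paper: the same three-term decomposition into a mixing error (controlled by $\vp_{L,4M}$ via the $\frac 13[(k-1)^{\rho/4}]$ gap between the support of $Q_k$ and $\cG_{k-1}$), an approximation error (controlled by the Lipschitz bound $|e^{ia}-e^{ib}|\leq|a-b|$ and $\be(K,\cdot)$), and the characteristic-function error from Lemma \ref{lem3.4}. The only superfluous element is the appeal to a Lemma \ref{lem3.2}-type $L^{2M}$ estimate for $A_2$: a plain triangle-inequality bound $E|Q_k-\tilde Q_k|\leq (n_k-m_{k-1})\be(K,\tfrac 13[(k-1)^{\rho/4}])$ on the residuals already gives the stated power of $k$, which is what the paper does.
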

 \begin{proof} Set
 \[
 F_k=\exp(i\langle w,\,(n_k-m_{k-1})^{-1/2}Q_k\rangle).
 \]
 Then $F_k$ is $\cF_{m_{k-1}-\frac 13[(k-1)^{\rho/4}],\infty}$-measurable and since $|F_k|=1$ we obtain by (\ref{2.1}) and (\ref{2.2}) that
 \[
 E|E(F_k|\cG_{k-1})-EF_k|\leq\| E(F_k|\cG_{k-1})-EF_k\|_{4M}\leq \vp_{L,4M}(\frac 13[(k-1)^{\rho/4}]).
 \]
 Since $|e^{ia}-e^{ib}|\leq |a-b|$, we obtain from (\ref{2.3})
 taking into account the stationarity of $\xi(k)$'s that,
 \[
 |EF_k-f_{n_k-m_{k-1}}(w)|\leq |w|k^{\rho/2}\be(K,\frac 13[(k-1)^{\rho/4}]),
 \]
 and (\ref{3.22}) follows from Lemma \ref{lem3.4}.
 \end{proof}

 \subsection{Strong approximations}\label{subsec3.3}

We will rely on the following result which is a version of Theorem 1 in \cite{BP} with some features
taken from Theorem 4.6 in \cite{DP} (see also Theorem 3 in \cite{MP}).
\begin{theorem}\label{thm3.8} Let $\{ V_k,\, k\geq 1\}$ be a sequence of random vectors with values in $\bbR^d$
defined on some probability space $(\Om,\cF,P)$ and such that $V_k$ is measurable with respect to $\cG_k,\, k=1,2,...$
where $\cG_k,\, k\geq 1$ is a filtration of countably generated sub-$\sig$-algebras of $\cF$. Assume that the probability
space is rich enough so that there exists on it a sequence of uniformly distributed on $[0,1]$ independent random variables
$U_k,\, k\geq 1$ independent of $\vee_{k\geq 1}\cG_k$. Let $G$ be a probability distribution on $\bbR^d$ with the characteristic
function $g$. Suppose that for some nonnegative numbers $\nu_m,\del_m$ and $K_m\geq 10^8d$,
 \begin{equation}\label{3.15+}
 E\big\vert E(\exp(i\langle w,V_k\rangle)|\cG_{k-1})-g(w)\big\vert \leq\nu_k
 \end{equation}
 for all $w\in\bbR^d$ with $|w|\leq K_k$ and
 \begin{equation}\label{3.16+}
 G\{ x:\, |x|\geq\frac 14K_k\}\leq\del_k.
 \end{equation}
 Then there exists a sequence $\{ W_k,\, k\geq 1\}$ of $\bbR^d$-valued random vectors defined on
 $(\Om,\cF,P)$ with the properties

 (i) $W_k$ is $\cG_k\vee\sig\{U_k\}$-measurable for each $k\geq 1$;

 (ii) each $W_k,\, k\geq 1$ has the distribution $G$ and $W_k$ is independent of $\sig\{ U_1,...,U_{k-1}\}\vee
 \cG_{k-1}$, and so also of $W_1,...,W_{k-1}$;

 (iii) Let $\vr_k=16K^{-1}_k\log K_k+2\nu_k^{1/2}K_k^d+2\del_k^{1/2}$. Then
 \begin{equation}\label{3.17+}
 P\{ |V_k-W_k|\geq\vr_k\}\leq\vr_k.
 \end{equation}
 \end{theorem}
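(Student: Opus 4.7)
The plan is to adapt the Berkes--Philipp conditional coupling scheme. The hypothesis (\ref{3.15+}) says that, conditionally on $\cG_{k-1}$, the characteristic function of $V_k$ is close on the ball $\{|w|\le K_k\}$ to $g$, so an Esseen-type smoothing argument will translate this into closeness of the regular conditional law $\mu_k(\cdot,\om)$ of $V_k$ given $\cG_{k-1}$ to $G$ in a Prokhorov-like metric. A Strassen--Dudley type conditional coupling, with $U_k$ playing the role of an external uniform randomizer, will then produce $W_k$ satisfying (i)--(iii).

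First I would carry out the smoothing step. Fix $\om$ and let $\mu_k=\mu_k(\cdot,\om)$ denote the regular conditional distribution of $V_k$ given $\cG_{k-1}$, which exists because $\cG_{k-1}$ is countably generated. Convolving $\mu_k-G$ with a smooth bump whose Fourier transform is supported in $\{|w|\le K_k\}$ and of spatial scale $\sim K_k^{-1}$, and then combining Plancherel with the bound (\ref{3.15+}) inside the ball and the tail bound (\ref{3.16+}) outside it, I would obtain
\[
\bigl|\mu_k(B)-G(B^{\vr_k/4})\bigr|\le \tfrac14\vr_k
\]
for every Borel set $B$, valid on a $\cG_{k-1}$-measurable set $\Om_k'$ with $P(\Om\setminus\Om_k')\le\vr_k/2$. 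This is the computation underlying Theorem~1 of \cite{BP}, where the three terms $K_k^{-1}\log K_k$, $\nu_k^{1/2}K_k^d$ and $\del_k^{1/2}$ in the definition of $\vr_k$ arise respectively from the smoothing radius, the Fourier approximation error magnified by the volume of the Fourier support, and the contribution of the tail of $G$.

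Second, I would build $W_k$ inductively. Assuming $W_1,\dots,W_{k-1}$ have already been constructed satisfying (i)--(iii), the Strassen--Dudley theorem provides, for any probability measure $\mu$ on $\bbR^d$ whose Prokhorov distance from $G$ is at most $\vr_k/4$, a coupling $\pi_\mu$ of $\mu$ and $G$ concentrated within distance $\vr_k$ of the diagonal except on a $\pi_\mu$-set of mass at most $\vr_k/2$. Using a measurable selection on the Polish space of probability measures on $\bbR^{2d}$, I can choose this coupling measurably in $\mu$ and then represent it via a jointly measurable map $h_k(\om,u)$, $u\in[0,1]$, with the property that the conditional distribution of $(V_k,h_k(\om,U_k))$ given $\cG_{k-1}$ equals $\pi_{\mu_k(\cdot,\om)}$ on $\Om_k'$. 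Setting $W_k:=h_k(\om,U_k)$ on $\Om_k'$ and $W_k:=H_k(U_k)$ off $\Om_k'$, where $H_k$ is a fixed quantile transform producing $G$ from a uniform variable, yields a random vector that is $\cG_k\vee\sig\{U_k\}$-measurable, whose conditional distribution given $\cG_{k-1}\vee\sig\{U_1,\dots,U_{k-1}\}$ equals $G$ (since $U_k$ is independent of that $\sig$-algebra by hypothesis), and that satisfies $P\{|V_k-W_k|\ge\vr_k\}\le\vr_k$ by direct integration of the Strassen--Dudley bound.

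The main obstacle is the joint measurability of the Strassen--Dudley coupling as a function of the random parameter $\mu_k(\cdot,\om)$, together with the verification that $W_k$ is genuinely independent of the past rather than merely $G$-distributed in the marginal sense. The cleanest way to handle this is to transfer everything to $[0,1]^d$ via a Borel isomorphism and construct the coupling from iterated conditional quantile transforms, which depend measurably on the input distribution in the weak topology; independence then follows by checking that the conditional law of $W_k$ given $\cG_{k-1}$ equals $G$ for $P$-almost every $\om$, which, combined with the independence of $U_k$ from $\vee_{k\ge 1}\cG_k$, yields unconditional independence and, inductively, independence from $W_1,\dots,W_{k-1}$. The bookkeeping on the exceptional set $\Om\setminus\Om_k'$ is handled by the fallback clause $W_k=H_k(U_k)$, whose contribution is already absorbed in the $\vr_k/2$ term.
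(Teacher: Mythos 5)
The paper does not actually prove Theorem \ref{thm3.8}: it is imported as a known result, a version of Theorem 1 of \cite{BP} with features from \cite{DP} and \cite{MP}, and used as a black box. Your outline is a faithful reconstruction of the Berkes--Philipp argument behind that cited theorem (Esseen-type smoothing to convert the conditional characteristic-function bound into conditional Prokhorov closeness to $G$ off a small $\cG_{k-1}$-measurable exceptional set, then a measurably selected Strassen--Dudley coupling randomized by $U_k$, with the quantile-transform fallback off the good set), so it takes essentially the same route as the source the paper relies on; the only slight imprecision is stating the Prokhorov estimate as a two-sided absolute-value bound $|\mu_k(B)-G(B^{\vr_k/4})|\le\vr_k/4$ rather than the usual pair of one-sided inequalities.
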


In order to apply Theorem \ref{thm3.8} we take $V_k=(n_k-m_{k-1})^{-1/2}Q_k,\, \cG_k$ as in Lemma \ref{lem3.7} and
\[
g(w)=\exp(-\frac 12\langle\vs w,w\rangle)
\]
so that $G$ is the mean zero $d$-dimensional Gaussian distribution with the covariance matrix $\vs$. Relying on Lemma \ref{lem3.7} we take $\wp=\frac 1{20}$,
\begin{equation*}
K_k=(n_k-m_{k-1})^{\wp/4d}\leq (n_k-m_{k-1})^{\wp/2}\,\,\mbox{and}\,\,
\nu_k=C_6k^{-\rho\wp}
\end{equation*}
for some $C_6>0$ independent of $k\geq 1$. By the Chebyshev inequality we have also
\begin{eqnarray*}
&G\{ x:\, |x|\geq \frac {K_k}4\}=P\{|\Psi|\geq\frac 14(n_k-m_{k-1})^{\wp/4d}\}\\
&\leq 4d\|\vs\|(n_k-m_{k-1})^{-\wp/2d}\leq C_7k^{-\wp\rho/2d}
\end{eqnarray*}
for some $C_7>0$ which does not depend on $k$.

Now Theorem \ref{thm3.8} provides us with random vectors $W_k,\, k\geq 1$ satisfying the properties (i)--(iii), in
particular, the random vector $W_k$ has the mean zero Gaussian distribution with the covariance matrix $\vs$, it is
independent of $W_1,...,W_{k-1}$ and the property (iii) holds true with
\[
\vr_k=4\frac \wp d(n_k-m_{k-1})^{-\wp/4d}\log(n_k-m_{k-1})+2C_6^{1/2}k^{- \rho\wp/4}+2C_7^{1/2}k^{-\rho\wp/4d}.
\]
Next, we choose $\rho>160d$ which gives
\begin{equation}\label{3.18+}
\vr_k\leq C_8k^{-2}
\end{equation}
for all $k\geq 1$ where $C_8>0$ does not depend on $k$.

Next, let $W(t),\, t\geq 0$ be a $d$-dimensional Brownian motion with the covariance matrix $\vs$ at the time 1. Then the sequence
 of random vectors $\tilde W_k=(n_k-m_{k-1})^{-1/2}(W(n_k)-W(m_{k-1})),\, k=1,2,...$ and $W_k,\, k\geq 1$ have the same
 distributions. Denote by $R$ the joint distribution of the process $\xi(n),\,-\infty<n<\infty$ and of the sequence
   $W_k,\, k\geq 1$ and let $\tilde R$ be the joint distribution of the sequence
   $\tilde W_k,\, k\geq 1$ and a $d$-dimensional Brownian motion $W(t),\, t\geq 0$
   with the covariance matrix $\vs$ at the time 1. Since the second marginal of $R$ coincides with the
   first marginal of $\tilde R$, it follows by Lemma A1 of \cite{BP} that the process $\xi$
   and the sequence $W_k,\, k\geq 1$ can be redefined on a richer probability space where there exists a Brownian motion $W(t),\, t\geq 0$ with the covariance matrix $\vs$ (at the time 1) such
   that $W_k=(n_k-m_{k-1})^{-1/2}(W(n_k)-W(m_{k-1}))$, and so from now on we will rely on
   this equality and assume that these $W_k$'s satisfy (\ref{3.17+}) with $\vr_k$ satisfying
   (\ref{3.18}). It follows by the Borel-Cantelli lemma that there exists a random variable $D=D(\om)<\infty$ a.s. such that
   \begin{equation}\label{3.19+}
   |V_k-W_k|\leq Dk^{-2}\quad\mbox{a.s.}
   \end{equation}

   Now we can obtain the following result.
   \begin{lemma}\label{lem3.9} With probability one,
  \begin{equation}\label{3.20+}
  \sup_{0\leq t\leq T}|\sum_{1\leq k\leq\ell_N(t)}Q_k-W(tN)|=O(N^{\frac 12-\del})
  \end{equation}
  for some $\del>0$ which does not depend on $N$.
  \end{lemma}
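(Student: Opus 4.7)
The plan is to exploit the coupling bound (\ref{3.19+}) together with a telescoping decomposition of $W(tN)$ that respects the block/gap structure. Since $m_0=0$ and $m_{\ell_N(t)}\leq Nt<m_{\ell_N(t)+1}$, I write
\begin{equation*}
W(tN)=\sum_{k=1}^{\ell_N(t)}\bigl(W(n_k)-W(m_{k-1})\bigr)+\sum_{k=1}^{\ell_N(t)}\bigl(W(m_k)-W(n_k)\bigr)+\bigl(W(tN)-W(m_{\ell_N(t)})\bigr),
\end{equation*}
so that $\sum_{k=1}^{\ell_N(t)}Q_k-W(tN)=A_N(t)-B_N(t)-C_N(t)$, where $A_N(t)=\sum_{k=1}^{\ell_N(t)}\bigl[Q_k-(W(n_k)-W(m_{k-1}))\bigr]$ is the block coupling error, $B_N(t)=\sum_{k=1}^{\ell_N(t)}(W(m_k)-W(n_k))$ is the accumulated gap contribution, and $C_N(t)=W(tN)-W(m_{\ell_N(t)})$ is a terminal Brownian oscillation. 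I will show each piece is $O(N^{1/2-\del})$ a.s.\ uniformly in $t\in[0,T]$, and then conclude by taking $\del$ strictly smaller than the three resulting surpluses.

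For $A_N$: since $n_k-m_{k-1}=[k^\rho]$, the rescaling $V_k=[k^\rho]^{-1/2}Q_k$, $W_k=[k^\rho]^{-1/2}(W(n_k)-W(m_{k-1}))$ combined with $|V_k-W_k|\leq D k^{-2}$ a.s.\ (which follows from (\ref{3.18+}) and Borel--Cantelli applied to the summable bounds $\vr_k\leq C_8k^{-2}$) yields $|Q_k-(W(n_k)-W(m_{k-1}))|\leq D k^{\rho/2-2}$ a.s. Summing and using $\ell_N=O(N^{1/(\rho+1)})$ from (\ref{3.18}) I get $\sup_t|A_N(t)|=O(N^{(\rho-2)/(2(\rho+1))})=O(N^{1/2-3/(2(\rho+1))})$. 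For $B_N$: the map $t\mapsto B_N(t)$ is piecewise constant with jumps only at $t=m_k/N$, so its supremum equals the maximum of partial sums of independent centred Gaussian vectors $W(m_k)-W(n_k)$ of covariance $(m_k-n_k)\vs\asymp k^{\rho/4}\vs$; Doob's $L^{2p}$ inequality with $p$ large (all Gaussian moments are finite), followed by Chebyshev and Borel--Cantelli along $N\in\bbN$, produces $\sup_t|B_N(t)|=O(N^{(\rho+4)/(8(\rho+1))+o(1)})$ a.s., an exponent comfortably below $1/2$. For $C_N$: bounding $|W(tN)-W(m_{\ell_N(t)})|\leq\max_{0\leq k\leq\ell_N}\sup_{s\in[m_k,m_{k+1}]}|W(s)-W(m_k)|$ and applying the sub-Gaussian tail of Brownian oscillation over intervals of length $\asymp k^\rho$, a union bound over $k\leq\ell_N$ and over $N\in\bbN$ together with Borel--Cantelli yields $\sup_t|C_N(t)|=O(\ell_N^{\rho/2}\sqrt{\log N})=O(N^{1/2-1/(2(\rho+1))}\sqrt{\log N})$ a.s.

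The main obstacle is the $C_N$ term. The interval $[m_{\ell_N(t)},Nt]$ can have length as large as $m_{\ell_N(t)+1}-m_{\ell_N(t)}\asymp\ell_N^\rho\asymp N^{\rho/(\rho+1)}$, so the naive Brownian oscillation bound already yields an exponent $\rho/(2(\rho+1))=1/2-1/(2(\rho+1))$, only narrowly below the critical $1/2$. This is the tightest of the three estimates, it dictates the achievable $\del$, and it is the reason $\rho$ must be taken large, which is however already in force by $\rho>160d$ introduced earlier to ensure summability of $\vr_k$. The remaining work is routine: convert the moment/tail bounds on each piece into almost-sure uniform bounds via Chebyshev plus Borel--Cantelli along $N\in\bbN$, absorbing the harmless $\sqrt{\log N}$ into the surplus, and set $\del$ to any positive number below $1/(2(\rho+1))$.
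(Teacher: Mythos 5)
Your proposal is correct and follows essentially the same route as the paper: the same decomposition of $\sum_{k\leq\ell_N(t)}Q_k-W(tN)$ into the block coupling error (controlled via (\ref{3.19+}) and (\ref{3.18}), giving the identical exponent $N^{\frac12(1-\frac{3}{\rho+1})}$) plus a purely Brownian remainder consisting of the gap increments and the terminal oscillation. The one genuine difference is that you separate the terminal piece $C_N(t)=W(tN)-W(m_{\ell_N(t)})$ from the gap sum and bound its supremum over the last (length $\asymp\ell_N^{\rho}$) block explicitly, correctly identifying it as the binding term that forces $\del<\frac{1}{2(\rho+1)}$; the paper instead folds this piece into $J_N^{(2)}$ and its displayed moment bound accounts only for the gap increments $W(m_k)-W(n_k)$ of length $[k^{\rho/4}]$, so your treatment is the more careful one on this point, while both arguments deliver the stated $O(N^{\frac12-\del})$.
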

  \begin{proof} We have
  \[
  J_N(t)=|\sum_{1\leq k\leq \ell_N(t)}Q_k-W(tN)|\leq  J_N^{(1)}(t)+| J_N^{(2)}(t)|
  \]
  where by (\ref{3.18}) and (\ref{3.19+}),
   \begin{equation}\label{3.21+}
  J_N^{(1)}(t)=\sum_{1\leq k\leq \ell_N(t)}(n_k-m_{k-1})^{1/2}|V_k-W_k|\leq D\sum_{1\leq k\leq \ell_N}
  [k^\rho]^{1/2}k^{-2}\leq D2^{\rho/2}N^{\frac 12(1-\frac 3{\rho+1})}
  \end{equation}
 and
 \[
  J_N^{(2)}(t)=W(tN)-W(m_{\ell_N(t)})+\sum_{1\leq k\leq\ell_N(t)}(W(m_k)-W(n_k)).
  \]
 Observe that  $ J_N^{(2)}(t),\, t\geq 0$ is a sum of independent random vectors and we could employ here the Burkholder--Davis--Gundy
 inequality, but for our purposes the following simpler estimate will also suffice,
 \begin{eqnarray*}
  &E\sup_{0\leq t\leq T}|J^{(2)}_N(t)|^{2M}\leq (\ell_N+2)^{2M}\sum_{1\leq k\leq\ell_N+1}E|W(m_k)-W(n_k)|^{2M}\\
  &\leq C_J(M)(\ell_N+2)^{2M}\sum_{1\leq k\leq\ell_N+1}k^{M\rho/4}\\
  &\leq\tilde  C_J(M)(\ell_N+2)^{\frac M4(\rho+12)}\leq\tilde  C_J(M)(T+2)^{\frac M4(\rho+12)}N^{\frac M4(\frac {\rho+12}{\rho+1})}
  \end{eqnarray*}
  where $C_J(M),\tilde C_J(M)>0$ do not depend on $N$ and we use (\ref{3.18}) on the last step. By the Chebyshev inequality
  \[
  P\{\sup_{0\leq t\leq T}|J^{(2)}_N(t)|\geq N^{\frac 12-\del}\}\leq\hat C_J(M)N^{-M(\frac 23-2\del)}
  \]
  provided $\rho>36$, and so if $M>6$ and $\del<\frac 16$ then the right hand side of this inequality is
  a term of a convergent series in $N$ which together with the Borel--Cantelli lemma and (\ref{3.21+}) yields (\ref{3.20+}).
  \end{proof}

Now combining Lemmas \ref{lem3.6} and \ref{lem3.9} we obtain that for some $\del>0$ and all $N\geq 1$,
\begin{equation}\label{3.22+}
\sup_{0\leq t\leq T}|S_N(t)-W_N(t)|=O(N^{-\del})\quad\mbox{a.s.},
\end{equation}
where $W_N(t)=N^{-1/2}W(tN)$ is another Brownian motion.

\section{Discrete time case estimates}\label{sec4}\setcounter{equation}{0}
\subsection{Variational norm estimates for sums}\label{subsec4.1}

First, we write,
\begin{eqnarray}\label{4.1}
&\| S_N-W_N\|_{p,[0,T]}=\sup_{0=t_0<t_1<...<t_m=T}\big(\sum_{0\leq i<m}|S_N(t_{i+1})\\
&-W_N(t_{i+1})-S_N(t_{i})+W_N(t_i)|^p\big)^{1/p}\leq J^{(1)}_N+J_N^{(2)}+J_N^{(3)}\nonumber
\end{eqnarray}
where $p\in(2,3)$,
\begin{eqnarray*}
&J_N^{(1)}=\sup_{0=t_0<t_1<...<t_m= T}\big(\sum_{i:\, t_{i+1}-t_i>N^{-(1-\al)}}|S_N(t_{i+1})-W_N(t_{i+1})\\
&-S_N(t_i)+W_N(t_i)|^p\big)^{1/p},
\end{eqnarray*}
\[
J_N^{(2)}=\sup_{0=t_0<t_1<...<t_m= T}\big(\sum_{i:\, t_{i+1}-t_i\leq N^{-(1-\al)}}|S_N(t_{i+1})-S_N
(t_i)|^p\big)^{1/p},
\]
\[
J_N^{(3)}=\sup_{0=t_0<t_1<...<t_m= T}\big(\sum_{i:\, t_{i+1}-t_i\leq N^{-(1-\al)}}|W_N(t_{i+1})-W_N
(t_i)|^p\big)^{1/p},
\]
with $\al\in (0,1)$ which will be specified later on. Since there exist no more than $[TN^{1-\al}]$ intervals
$[t_{i},t_{i+1}]$ such that $t_{i+1}-t_i>N^{-(1-\al)}$  and we obtain by (\ref{3.22+}) that
\begin{equation}\label{4.2}
J_N^{(1)}= O(N^{-\del+p^{-1}(1-\al)})\quad\mbox{a.s.}
\end{equation}
where $\al$ will be chosen so that $\al>1-\del$.

In order to estimate $J_N^{(2)}$ and $J_N^{(3)}$ observe that $\sum_{0\leq i<m}(t_{i+1}-t_i)=T$ and
relying on Proposition \ref{hoelder2} for $J_N^{(2)}$ and on Theorem \ref{hoelder} for $J_N^{(3)}$ together
with Lemma \ref{lem3.2} we estimate easily (see Section 4.1 in \cite{Ki23} or Section 3.4 in \cite{FK}),
\[
E(J_N^{(2)})^{4M}\leq K_{S,\be}(M)T^{4M/p}N^{-4M(1-\al)(\frac 12-\frac 1p-\be)}
\]
and
\[
E(J^{(3)}_N)^{4M}\leq K_{W,\be}(M)T^{4M/p}N^{-4M(1-\al)(\frac 12-\frac 1p-\be)}
\]
where $K_{S,\be}(M)>0$ and $K_{W,\be}(M)>0$ do not depend on $N$ and $\be$ is chosen to satisfy $\frac 1{4M}<\be<\frac 12-\frac 1p$.
By the Chebyshev inequality for any $\gam,N>0$,
\[
P\{ J^{(2)}_N\geq N^{-(1-\al)\gam}\}\leq K_{S,\be}(M)T^{4M/p}N^{-4M(1-\al)(\frac 12-\frac 1p-\be-\gam)}
\]
and
\[
P\{ J^{(3)}_N\geq N^{-(1-\al)\gam}\}\leq K_{W,\be}(M)T^{4M/p}N^{-4M(1-\al)(\frac 12-\frac 1p-\be-\gam)}.
\]
Choosing $\be+\gam<\frac 12-\frac 1p$ and $M\geq\frac 12(1-\al)^{-1}(\frac 12-\frac 1p-\be-\gam)^{-1}$ we obtain that the right
hand side of the inequalities above form terms of convergent series, and so by the Borel--Cantelli lemma,
\[
J^{(2)}_N=O(N^{-(1-\al)\gam})\quad\mbox{and}\quad J^{(3)}_N=O(N^{-(1-\al)\gam})\quad\mbox{a.s.}
\]
These together with (\ref{4.2}) proves Theorem \ref{thm2.1} for $\nu=1$.

\subsection{Supremum norm estimates for iterated sums}\label{subsec4.2}

We start with the supremum norm estimates for iterated sums.
Set $m_N=[N^{1-\ka}]$ with a small $\ka>0$ which will be chosen later
on, $\nu_N(l)=\max\{ jm_N:\, jm_N<l\}$ if $l>m_N$ and
\[
R_i(k)=R_i(k,N)=\sum_{l=(k-1)m_N}^{km_N-1}\xi_i(l)\,\,\mbox{for}\,\, k=1,2,...,\iota_N(T)
\]
 where $\iota_N(t)=[[Nt]m_N^{-1}]$.
For $1\leq i,j\leq d$ define
\begin{equation*}
\bbU_N^{ij}(t)=N^{-1}\sum_{l=m_N}^{\iota_N(t)m_N-1}\xi_j(l)\sum_{k=0}^{\nu_N(l)}\xi_i(k)=N^{-1}\sum_{1<l\leq\iota_N(t)}
R_j(l)\sum_{k=0}^{(l-1)m_N-1}\xi_i(k).
\end{equation*}
Set also
\[
\bar\bbS_N^{ij}(t)=\bbS_N^{ij}(t)-t\sum_{l=1}^\infty E(\xi_i(0)\xi_j(l))
\]
where the series converges absolutely in view of Lemma \ref{lem3.3}.
Relying on Lemma \ref{lem3.2} it is not difficult to show (cf. Lemma 4.1 in \cite{FK}) that for all $i,j=1,...,d$ and $N\geq 1$,
\begin{equation}\label{4.3}
E\sup_{0\leq t\leq T}|\bar\bbS_N^{ij}(t)-\bbU_N^{ij}(t)|^{2M}\leq C_{SU}(M)N^{-M\min(\ka,1-\ka)}
\end{equation}
where $C_{SU}(M)>0$ does not depend on $N$. Indeed,
\begin{equation}\label{4.4}
|\bar\bbS_N^{ij}(t)-\bbU_N^{ij}(t)|\leq |\cI_N^{(1)}(t)|+|\cI_N^{(2)}(t)|+|\cI_N^{(3)}(t)|
+|\cI_N^{(4)}(t)|
\end{equation}
where
\[
\cI_N^{(1)}(t)=\cI_N^{1;ij}(t)=N^{-1}\sum_{l=m_N}^{\iota_N(t)m_N-1}\sum_{k=\nu_N(l)+1}^{l-1}\big(\xi_j(l)\xi_i(k)-
E(\xi_j(l)\xi_i(k))\big),
\]
\[
\cI_N^{(2)}(t)=\cI_N^{2;ij}(t)=N^{-1}\sum_{l=\iota_N(t)m_N}^{[Nt]-1}\sum_{k=0}^{l-1}\big(\xi_j(l)\xi_i(k)-
E(\xi_j(l)\xi_i(k))\big),
\]
\[
\cI_N^{(3)}(t)=\cI_N^{3;ij}(t)=N^{-1}\sum_{l=1}^{m_N-1}\sum_{k=0}^{l-1}\big(\xi_j(l)\xi_i(k)-
E(\xi_j(l)\xi_i(k))\big)
\]
and
\[
\cI_N^{(4)}(t)=\cI_N^{4;ij}(t)=\cI_N^{4,1;ij}(t)-\cI_N^{4,2;ij}(t)
\]
with
\[
\cI_N^{4,1;ij}(t)=N^{-1}\sum_{l=1}^{[Nt]-1}\sum_{k=0}^{l-1}E(\xi_j(l)\xi_i(k))-t\sum_{l=1}^\infty E(\xi_i(0)\xi_j(l))
\]
and
\[
\cI_N^{4,2;ij}(t)=N^{-1}\sum_{l=m_N}^{\iota_N(t)m_N-1}\sum_{k=0}^{\nu_N(l)}E(\xi_j(l)\xi_i(k)).
\]
By Lemma \ref{lem3.2},
\begin{eqnarray}\label{4.5}
&E\sup_{0\leq t\leq T}|\cI_N^{(1;i,j)}(t)|^{2M}\leq C_{\cI}(M)T^MN^{-M\ka},\\
& E\sup_{0\leq t\leq T}|\cI_N^{(2;i,j)}(t)|^{2M}\leq C_{\cI}(M)T^MN^{-M\ka}\nonumber\\
&\mbox{and}\,\,\, E\sup_{0\leq t\leq T}|\cI_N^{(3;i,j)}(t)|^{2M}\leq C_{\cI}(M)N^{-2M\ka}\nonumber
\end{eqnarray}
where $C_{\cI}(M)>0$ does not depend on $N$. By the same estimates as in Lemma \ref{lem3.3} we obtain also that
\[
\sup_{0\leq t\leq T}|\cI_N^{(4,1;i,j)}(t)|\leq C_{\cI,4}N^{-1}\,\,\mbox{and}\,\,
\sup_{0\leq t\leq T}|\cI_N^{(4,2;i,j)}(t)|\leq C_{\cI,4}N^{-(1-\ka)}
\]
for some $C_{\cI,4}>0$ which does not depend on $N$. This together with (\ref{4.4}) and (\ref{4.5}) yields (\ref{4.3}).
Choosing $M$ in (\ref{4.3}) large enough and using first the Chebyshev inequality and then the Borel--Cantelli lemma we derive as before
that
\begin{equation}\label{4.6}
\sup_{0\leq t\leq T}|\bar\bbS_N^{ij}(t)-\bbU_N^{ij}(t)|=O(N^{-\del_1})\,\,\mbox{a.s.}
\end{equation}
for some $\del_1>0$ which does not depend on $N$.

Now set
\[
S^i_N(t)=N^{-1/2}\sum_{0\leq k<[Nt]}\xi_i(k),\,\, i=1,...,d
\]
and observe that
\[
\bbU^{ij}_N(t)=\sum_{2\leq l\leq\iota_N(t)}\big(S^j_N\big(\frac {lm_N}N\big)-S^j_N\big(\frac {(l-1)m_N}N\big)\big)S^i_N
\big(\frac {(l-1)m_N}N\big).
\]
Define
\[
\bbV^{ij}_N(t)=\sum_{2\leq l\leq\iota_N(t)}\big(W^j_N\big(\frac {lm_N}N\big)-W^j_N\big(\frac {(l-1)m_N}N\big)\big)W^i_N
\big(\frac {(l-1)m_N}N\big)
\]
where $W_N=(W^1_N,...,W^d_N)$ is the $d$-dimensional Brownian motion with the covariance matrix $\vs$ (at the time 1)
appearing in (\ref{2.6}). Then
\begin{eqnarray}\label{4.7}
&\sup_{0\leq t\leq T}|\bbU^{ij}_N(t)-\bbV^{ij}_N(t)|\leq\sum_{2\leq l\leq\iota_N(T)}\big(\big(\big\vert S^j_N\big(\frac {lm_N}N\big)-W^j_N\big(\frac {lm_N}N\big)\big\vert\\
&+\big\vert S^j_N\big(\frac {(l-1)m_N}N\big)-W^j_N\big(\frac {(l-1)m_N}N\big)\big\vert\big)\big\vert
S^i_N\big(\frac {(l-1)m_N}N\big)\big\vert\nonumber\\
&+\big\vert W^j_N\big(\frac {lm_N}N\big)-W^j_N\big(\frac {(l-1)m_N}N\big)\big\vert\big\vert  S^i_N\big(\frac {(l-1)m_N}N\big)-W^i_N\big(\frac {(l-1)m_N}N\big)\big\vert\big).\nonumber
\end{eqnarray}

By Lemma \ref{lem3.2},
\begin{equation*}
E\max_{2\leq l\leq\iota_N(T)}\big\vert S^i_N\big(\frac {(l-1)m_N}N\big)\big\vert^{2M}
\leq C_{S}(M)T^{M}
\end{equation*}
for some $C_{S}(M)>0$ which does not depend on $N$.
Taking $M$ large enough we obtain by the Chebyshev inequality together with the Borel--Cantelli lemma that for any $\gam>0$,
\begin{equation}\label{4.8}
\max_{2\leq l\leq\iota_N(T)}\big\vert S^i_N\big(\frac {(l-1)m_N}N\big)\big\vert=O(N^\gam)\quad\mbox{a.s.}
\end{equation}
Next, write
\begin{eqnarray*}
&E\max_{2\leq l\leq\iota_N(T)}\big\vert W^j_N\big(\frac {lm_N}N\big)-W^j_N\big(\frac {(l-1)m_N}N\big)\big\vert^{2M}\\
&\leq\sum_{2\leq l\leq\iota_N(T)}E\big\vert W^j_N\big(\frac {lm_N}N\big)-W^j_N\big(\frac {(l-1)m_N}N\big)\big\vert^{2M}.
\end{eqnarray*}
Using the standard moment estimates for the Brownian motion and relying on the Chebyshev inequality and the Borel--Cantelli
lemma we obtain similarly to the above that for $\gam<\ka/2$,
\begin{equation}\label{4.9}
\max_{2\leq l\leq\iota_N(T)}\big\vert W^j_N\big(\frac {lm_N}N\big)-W^j_N\big(\frac {(l-1)m_N}N\big)\big\vert=O(N^{-\gam})\quad\mbox{a.s}.
\end{equation}
Now, combining (\ref{4.7})--(\ref{4.9}) with already proved Theorem \ref{thm2.1} for $\nu=1$ we obtain that
\begin{equation}\label{4.10}
\sup_{0\leq t\leq T}|\bbU^{ij}_N(t)-\bbV^{ij}_N(t)|=O(N^{-\del_2})\quad\mbox{a.s.}
\end{equation}
where $\del_2=\del-\ka-\gam$ and we choose $\ka$ and $\gam$ so small that $\del_2>0$.

Next, observe that
\begin{eqnarray}\label{4.11}
&\sup_{0\leq t\leq T}|\int_0^tW_N^i(s)dW^{j}_N(s)-\bbV^{ij}_N(t)|\\
&\leq\sup_{0\leq t\leq T}|J_N^{(1;i,j)}(t)|+\sup_{0\leq t\leq m_NN^{-1}}|J_N^{(2;i,j)}(t)|+\sup_{0\leq t\leq T}|J_N^{(3;i,j)}(t)|
\nonumber\end{eqnarray}
where
\[
J_N^{(1;i,j)}(t)=\sum_{2\leq l\leq\iota_N(t)}\int_{(l-1)m_NN^{-1}}^{lm_NN^{-1}}\big(W^i_N(s)-
W^i_N\big(\frac {(l-1)m_N}N\big)\big)dW_N^j(s),
\]
\[
J_N^{(2;i,j)}(t)=\int_0^{t}W_N^i(s)dW^j_N(s)\,\,\mbox{and}\,\,
J_N^{(3;i,j)}(t)=\int_{(\iota_N(t)\vee 1)m_NN^{-1}}^{t}W_N^i(s)dW^j_N(s).
\]
By the standard (martingale) moment estimates for stochastic integrals (see, for instance, \cite{Mao}, Sections 1.3 and 1.7),
\begin{eqnarray*}
&E\sup_{0\leq t\leq T}|J_N^{(1;i,j)}(t)|^{2M}\\
&\leq (\frac {2M}{2M-1})^{2M}E|\int_{m_NN^{-1}}^T(W_N^i(s)-W^i_N([\frac {sN}{m_N}]\frac {m_N}N))dW^j_N(s)|^{2M}\\
&\leq(\frac {2M}{2M-1})^{3M}T^{M-1}\int_0^TE|W_N^i(s)-W^i_N([\frac {sN}{m_N}]\frac {m_N}N)|^{2M}ds\leq C^{(1)}_{J}(M,T)N^{-M\ka}\\
\end{eqnarray*}
where $C^{(1)}_{J}(M,T)>0$ does not depend on $N$. Similarly,
\begin{eqnarray*}
&E\sup_{0\leq t\leq T}|J_N^{(2;i,j)}(t)|^{2M}\leq C^{(2)}_{J}(M,T)N^{-M\ka}\,\,\mbox{and}\,\, E\sup_{0\leq t\leq T}|J_N^{(3;i,j)}(t)|^{2M}\\
&\leq\sum_{2\leq l\leq\iota_N(T)}E\sup_{0\leq u\leq m_N}|\int_{(l-1)m_NN^{-1}}^{((l-1)m_N+u)N^{-1}}W_N^i(s)dW_N^j(s)|^{2M}\\
&\leq C^{(3)}_{j}(M,T)N^{-(M-1)\ka}
\end{eqnarray*}
where $C^{(2)}_{J}(M,T),\, C^{(3)}_{J}(M,T) >0$ do not depend on $N$. Taking $\del_3<\frac 12\ka$ and $M\geq(2+\ka)(\ka-2\del_3)^{-1}$ and
employing the Chebyshev inequality together with the Borel--Cantelli lemma in the same way as above, we conclude that
\begin{equation*}
\sup_{0\leq t\leq T}|J_N^{(1;i,j)}(t)|+\sup_{0\leq t\leq T}|J_N^{(2;i,j)}(t)|+\sup_{0\leq t\leq T}|J_N^{(3;i,j)}(t)|=
O(N^{-\del_3})\,\,\,\mbox{a.s.}
\end{equation*}
This together with (\ref{4.6}), (\ref{4.10}) and (\ref{4.11}) completes the proof of Theorem 2.1 for $\nu=2$ in the supremum norm.

\subsection{Variational norm estimates for iterated sums}\label{subsec4.3}

For $0\leq s<t\leq T$ and $i,j=1,...,d$ set
\[
\bar\bbS_N^{ij}(s,t)=N^{-1}\sum_{[sN]\leq k<l<[Nt]}\xi_i(k)\xi_j(l)-(t-s)\sum_{l=1}^\infty E(\xi_i(0)\xi_j(l))
\]
and
\[
\bar\bbW_N^{ij}(s,t)=\int_s^t(W^i_N(u)-W^i_N(s))dW_N^j(u).
\]
Hence,
\[
\bbS_N^{ij}(s,t)-\bbW_N^{ij}(s,t)=\bar\bbS_N^{ij}(s,t)-\bar\bbW_N^{ij}(s,t).
\]
Now for $0\leq T$,
\begin{eqnarray}\label{4.12}
&\|\bbS_N^{ij}-\bbW_N^{ij}\|_{p/2,[0,T]}=\sup_{0=t_0<t_1<...<t_m=T}(\sum_{0\leq q<m} |\bbS_N^{ij}(t_q,t_{q+1})\\
&- \bbW_N^{ij}(t_q,t_{q+1})|^{p/2})^{1/p}\leq J_N^{(1)}+2^{p/2}(J_N^{(2)}+J_N^{(3)})\nonumber
\end{eqnarray}
where $p\in(2,3)$,
\begin{equation*}
J_N^{(1)}=\sup_{0=t_0<t_1<...<t_m= T}\big(\sum_{l:\, t_{l+1}-t_l>N^{-(1-\al)}}|\bbS_N^{ij}(t_l,t_{l+1})-
\bbW_N^{ij}(t_l,t_{l+1})|^{p/2}\big)^{1/p},
\end{equation*}
\[
J_N^{(2)}=\sup_{0=t_0<t_1<...<t_m= T}\big(\sum_{l:\, t_{l+1}-t_l\leq N^{-(1-\al)}}|\bar\bbS_N^{ij}(t_l,t_{l+1})|
^{p/2}\big)^{1/p}
\]
and
\[
J_N^{(3)}=\sup_{0=t_0<t_1<...<t_m= T}\big(\sum_{l:\, t_{l+1}-t_l\leq N^{-(1-\al)}}|\bar\bbW_N(t_i,t_{l+1})|^{p/2}\big)^{1/p}.
\]

Observe that
\[
\bbS^{ij}_N(s,t)=\bbS^{ij}_N(t)-\bbS^{ij}_N(s)-S^i_N(s)(S^j_N(t)-S^j_N(s)),
\]
\[
\bbW^{ij}_N(s,t)=\bbW^{ij}_N(t)-\bbW^{ij}_N(s)-W^i_N(s)(W^j_N(t)-W^j_N(s)).
\]
We will use this to estimate $J^{(1)}$ relying on
\begin{equation}\label{4.13}
\sup_{0\leq t\leq T}|S_N(t)-W_N(t)|=O(N^{-\del_4})\,\,\mbox{and}\,\,\sup_{0\leq t\leq T}|\bbS_N^{ij}(t)-\bbW_N^{ij}(t)|=O(N^{-\del_4})\,\,
\mbox{a.s.}
\end{equation}
which was already proved in Sections \ref{subsec3.3} and \ref{subsec4.2} for all $i,j$ and some $\del_4>0$ which does not depend on $N$. Since there
exist no more than $[TN^{1-\al}]$ disjoint intervals $[t_r,t_{r+1}]$ in $[0,T]$ with the length bigger than $N^{-(1-\al)}$ we obtain from (\ref{4.13})
that
\begin{equation}\label{4.14}
J_N^{(1)}\leq C_{\bbS\bbW}N^{(1-\al)p^{-1}-\frac 12\del_4}(1+\sup_{0\leq t\leq T}|S_N(t)|^{1/2}+\sup_{0\leq t\leq T}|W_N(t)|^{1/2})
\end{equation}
where $C_{\bbS\bbW}>0$ is an a.s. finite random variable which does not depend on $N$. Choosing $\al$ so that $2(1-\al)p^{-1}<\del_4$ and taking
into account that $\sup_{0\leq t\leq T}|W_N(t)|$ has all moments (with independent of $N$ bounds) we conclude
 by the Chebyshev inequality together with the Borel--Cantelli lemma that for any $\gam>0$,
\[
\sup_{0\leq t\leq T}|W_N(t)|=O(N^\gam)\quad\mbox{a.s.}
\]
which together with (\ref{3.22+}) yields also that
\[
\sup_{0\leq t\leq T}|S_N(t)|=O(N^\gam)\quad\mbox{a.s.}
\]
These together with (\ref{4.14}) imply that
\begin{equation}\label{4.15}
J_N^{(1)}=O(N^{1-\al-\frac 12p\del_4+\gam})\quad\mbox{a.s.}
\end{equation}

Next, in the same way as in Section 4.3 from \cite{Ki23} (see also Section 4.2 in \cite{FK}) we estmate
\begin{equation*}
E(J^{(2)}_N)^{4M}\leq C_{J,\be}N^{-2M(1-\al)(1-\frac 2p-\be)}\,\,\mbox{and}\,\,E(J^{(3)}_N)^{4M}\leq C_{J,\be}N^{-2M(1-\al)(1-\frac 2p-\be)}
\end{equation*}
where $\be>0$ can be chosen arbitrarily small and $C_{J,\be}>0$ does not depend on $N$. This together with the Chebyshev inequality
 and the Borel--Cantelli lemma yields, choosing as above $M$ large enough, that
\begin{equation}\label{4.16}
J^{(2)}_N=O(N^{-\frac 12(1-\al)(1-\frac 2p-\be)})\quad\mbox{and}\quad J^{(3)}_N=O(N^{-\frac 12(1-\al)(1-\frac 2p-\be)})\quad\mbox{a.s.}
\end{equation}.
Finally, Theorem \ref{thm2.1} follows for $\nu=2$ from (\ref{4.12}), (\ref{4.15}) and (\ref{4.16}) while its extension to $\nu>2$
will be obtained later on.

\section{Continuous time case}\label{sec5}\setcounter{equation}{0}
\subsection{Straightforward setup}\label{subsec5.1}
Set $\eta(k)=\int_{k}^{k+1}\xi(s)ds,\, k=0,1,...,[NT]-1$,
\[
Z_N(s,t)=N^{-1/2}\sum_{sN\leq k<[tN]}\eta(k),
\]
\[
\bbZ_N^{ij}(s,t)=N^{-1}\sum_{sN\leq l<k<[tN]}\eta_i(l)\eta_j(k)
\]
and, again, $Z_N(t)=Z_N(0,t)$, $\bbZ^{ij}_N(t)=\bbZ_N^{ij}(0,t)$. Observe that
\begin{eqnarray*}
&\|\eta(k)-E(\eta(k)|\cF_{k-l,k+l})\|_a\leq\int_k^{k+1}\|\xi(s)-E(\xi(s)|\cF_{k-l,k+l})\|_ads\\
&\leq 2\int_k^{k+1}\|\xi(s)-E(\xi(s)|\cF_{k-l+1,k+l-1})\|_ads\leq\be(a,l-1)
\end{eqnarray*}
where $\be$ defined by (\ref{2.15}) satisfies (\ref{2.4}) by the assumption of Theorem \ref{thm2.2}. Hence,
similarly to Lemma \ref{lem3.3} the limits
\begin{eqnarray*}
&\lim_{N\to\infty}N^{-1}\sum_{0\leq l<k<[tN]}E(\eta_i(l)\eta_j(k))=t\sum_{k=1}^\infty E(\eta_i(0)\eta_j(k))\\
&=\lim_{N\to\infty}N^{-1}\int_0^{tN}du\int_0^{[u]}E(\xi_i(v)\xi_j(u))dv=t(\int_0^\infty E(\xi_i(0)\xi_j(u))du-F_{ij})
\end{eqnarray*}
exist, where $F_{ij}=\int_0^1du\int_0^uE(\xi_i(v)\xi_j(u))dv$, and
\[
|N^{-1}\sum_{0\leq l<k<[tN]}E(\eta_i(l)\eta_j(k))-\sum_{1\leq k<[tN]} E(\eta_i(0)\eta_j(k))|=O(N^{-1}).
\]

Since the sequence $\eta(k),\, k\geq 0$ satisfies the conditions of Theorem \ref {thm2.1}, it follows
that the process $\xi$ can be redefined preserving its distributions on a sufficiently rich probability space which
contains also a $d$-dimensional Brownian motion $\cW$ with the covariance matrix $\vs$ so that the rescaled Brownian motion
$W_N(t)=N^{-1/2}\cW(Nt)$ satisfies
\begin{equation}\label{5.1}
\| Z_N-W_N\|_{p,[0,T]}=O(N^{-\ve})\,\,\mbox{and}\,\,\| \bbZ_N-\bbW_N^\eta\|_{\frac p2,[0,T]}=O(N^{-\ve})\quad\mbox{a.s.}
\end{equation}
where
\[
\bbW_N^\eta(s,t)=\int_s^tW_N(s,v)\otimes dW_N(v)+(t-s)\sum_{l=1}^\infty E(\eta(0)\otimes\eta(l))=\bbW_N(s,t)-(t-s)F,
\]
$ F=(F_{ij})$, $F_{ij}=\int_0^1du\int_0^uE(\xi_i(v)\xi_j(u))dv$, $\bbW_N=\bbW_N^{(2)}$ is defined in Section \ref{subsec2.1}, $p\in(2,3)$
and $\ve>0$ does not depend on $N$.
In fact, Theorem \ref{thm2.1} gives directly only that the sequence $\eta(k),\, k\geq 0$ above can be redefined so that $Z_N$ and $\bbZ_N$
constructed by this sequence satisfy (\ref{5.1}). But if $\eta(k),\, k\geq 0$ is the above sequence and $\tilde\eta(k),\,k\geq 0$
is the redefined sequence with $Z_N$ and $\bbZ_N$ constructed by it satisfy (\ref{5.1}) then we have two pairs of processes $(\xi,\eta)$ and $(\tilde\eta,\cW)$.
 Since the second marginal of the first pair has the same distribution as the first marginal of the second pair, we
can apply Lemma A1 of \cite{BP} which yields three processes $(\hat\xi,\hat\eta,\hat\cW)$ such that the joint distributions of
 $(\hat\xi,\hat\eta)$ and of $(\hat\eta,\hat\cW)$ are the same as of $(\xi,\eta)$ and of $(\tilde\eta,\cW)$, respectively. Hence,
 we have (\ref{5.1}) for $\hat\xi,\hat\eta,\hat\cW$ in place of $\xi,\eta,\cW$ and we can and will assume in what follows that (\ref{5.1})
 holds true for $\xi,\eta,\cW$ with $Z,\bbZ,W_N$ and $\bbW_N$ defined above.

 Thus, in order to derive Theorem \ref{thm2.2} for $\ell=1$ and $\ell=2$ in this setup it remains to estimate $\| Z_N-S_N\|_{p,[0,T]}$ and
 $\| \bbZ_N-\bbS_N+F\|_{p/2,[0,T]}$ where
 \begin{eqnarray*}
 &S_N(t)=\bbS_N^{(1)}(t)=N^{-1/2}\int_0^{tN}\xi(s)ds\,\,\,\mbox{and}\\
 &\bbS^{ij}_N(t)=\bbS^{ij}_N(t)=N^{-1}\int_0^{tN}\xi_j(u)du\int_0^u\xi_i(v)dv.
 \end{eqnarray*}
 Set $\bbF(s,t)=(t-s)F$ with the matrix $F$ defined above.
It was shown in Section 5 from \cite{Ki23} that
\begin{eqnarray}\label{5.2}
&E\| S_N-Z_N\|^{4M}_{p,[0,T]}\leq C_{SZp}(M)N^{-2M(1-\frac 2p)+1}\,\,\mbox{and}\\
& E\|\bbZ_N+\bbF-\bbS_N\|^{2M}_{\frac p2,[0,T]}\leq C_{SZp}(M)N^{-M\ka}\nonumber
\end{eqnarray}
for some $C_{SZp}(M)>0$ and $\ka>0$ which do not depend on $N$ provided $M$ is sufficiently large. Now by (\ref{5.2}) and the Chebyshev inequality,
\begin{eqnarray*}
&P\{\| S_N-Z_N\|^{4M}_{p,[0,T]}>N^{-\gam}\}\leq C_{SZp}(M)N^{-2M(1-\frac 2p-2\gam)+1}\,\,\mbox{and}\\
&P\{\|\bbZ_N+\bbF-\bbS_N\|^{2M}_{\frac p2,[0,T]}>N^{-\gam}\}\leq C_{SZp}(M)N^{-M(\ka-4\gam)}.
\end{eqnarray*}
Choosing $0<\gam<\min(\ka/4,\,\frac 12-\frac 1p)$, $M>\max(\ka-4\gam)^{-1},\,(1-\frac 2p-2\gam)$ and applying the Borel--Cantelli lemma we obtain that
\begin{equation*}
\| S_N-Z_N\|_{p,[0,T]}=O(N^{-\gam})\,\,\mbox{and}\,\,\| \bbS_N-\bbZ_N-\bbF\|_{\frac p2,[0,T]}=O(N^{-\gam})\quad\mbox{a.s.}
\end{equation*}
which together with (\ref{5.1}) yields Theorem \ref{thm2.2}.

 \subsection{Suspension setup}\label{subsec5.2}
Set again
\[
Z_N(s,t)=N^{-1/2}\sum_{sN\leq k<[tN]}\eta(k)\quad\mbox{and}
\]
\[
\bbZ_N^{ij}(s,t)=N^{-1}\sum_{sN\leq l<k<[tN]}\eta_i(l)\eta_j(k)
\]
with $\eta$ now defined in Section \ref{subsec2.3}. As before, we denote also $Z_N(t)=Z_N(0,t)$
and $\bbZ^{ij}_N(t)=\bbZ_N^{ij}(0,t)$. Since the sequence $\eta$ satisfies the conditions of Theorem \ref{thm2.1},
we argue similarly to Section \ref{subsec5.1} to conclude that the process $\xi$ can be redefined preserving its
distributions on a sufficiently rich probability space which contains also a $d$-dimensional Brownian motion
$\cW$ with the covariance matrix $\vs$ given by (\ref{2.19}) so that the rescaled Brownian motion $W_N(t)=
N^{-1/2}\cW(Nt)$ satisfies
\begin{equation}\label{5.3}
\| Z_N-W_N\|_{p,[0,T]}=O(N^{-\ve})\,\,\mbox{and}\,\,\| \bbZ_N-\bbW_N^\eta\|_{\frac p2,[0,T]}=O(N^{-\ve})\quad\mbox{a.s.}
\end{equation}
where
\begin{eqnarray*}
&\bbW_N^\eta(s,t)=\int_s^tW(s,v)\otimes dW_N^j(v)+(t-s)\sum_{l=1}^\infty E(\eta(0)\otimes\eta(l))\\
&=\bbW_N(s,t)-(t-s)F,\, F=(F_{ij}),\, F_{ij}=E\int_0^{\tau(\om)}\xi_j(s,\om)ds\int_0^s\xi_i(u,\om)du,
\end{eqnarray*}
$\bbW_N=\bbW_N^{(2)}$ is defined in Section \ref{subsec2.1}, $p\in(2,3)$, $\ve>0$ and $C_Z(M),\, C_\bbZ(M)>0$ do not depend on $N$.

Next, define $n(s)=n(s,\om)=0$ if $\tau(\om)>s$ and for $s\geq\tau(\om)$,
\[
n(s)=n(s,\om)=\max\{ k:\,\sum_{j=0}^{k-1}\tau\circ\vt^j(\om)\leq s\}.
\]
Now define
\begin{eqnarray*}
&U_N(s,t)=N^{-1/2}\sum_{n(s\bar\tau N)\leq k<n(t\bar\tau N)}\eta(k)\,\,\mbox{and}\\
&\bbU_N^{ij}(s,t)=N^{-1}\sum_{n(s\bar\tau N)\leq k<l<n(t\bar\tau N)}\eta_i(k)\eta_j(l)
\end{eqnarray*}
setting again $U_N(t)=U_N(0,t)$ and $\bbU^{ij}_N(t)=\bbU^{ij}(0,t)$. Comparing, first, $S_N(s,t)=\bbS_N^{(1)}(s,t)$
and $\bbS_N(s,t)-E\bbS_N(s,t)$ with $U_N(s,t)$ and $\bbU_N(s,t)$ and then with $Z_N(s,t)$ and $\bbZ_N(s,t)$,
respectively, it was shown in Section 6.2 of \cite{Ki23} that
\begin{equation}\label{5.4}
E\| S_N-Z_N\|^{4M}_{p,[0,T]}\leq C_{SZ}(M)N^{-4M\gam}
\end{equation}
for some $C_{SZ}(M),\gam>0$ which do not depend on $N$.  Set $\bbF(s,t)=(t-s)F$ with the matrix $F$ defined above.
In Section 6.3 of \cite{Ki23} it was shown also that
\begin{equation}\label{5.5}
E\|\bbS_N-\bbF-\bbZ_N\|^{2M}_{p/2,[0,T]}\leq \bbC_{\bbS\bbZ}(M)N^{-2M\gam}
\end{equation}
for some $\bbC_{\bbS\bbZ}(M),\gam>0$ which do not depend on $N$. Taking $M>1/\gam$ in (\ref{5.4}) and (\ref{5.5})
 and applying them together with the Chebyshev inequality to estimate
$P\{\| S_N-Z_N\|_{p,[0,T]}>N^{-\frac 12\gam}\}$ and $P\{\| \bbS_N-\bbF-\bbZ_N\|_{p/2,[0,T]}>N^{-\frac 12\gam}\}$
we derive via the Borel--Cantelli lemma that
\begin{equation}\label{5.6}
\| S_N-W_N\|_{p,[0,T]}=O(N^{-\frac 12\gam})\quad\mbox{and}\quad\| \bbS_N-\bbF-\bbZ_N\|_{p/2,[0,T]}=O(N^{-\frac 12\gam})\quad\mbox{a.s.}
\end{equation}
It follows from (\ref{5.3}) and (\ref{5.6}) that
\begin{equation}\label{5.7}
\| S_N-Z_N\|_{p,[0,T]}=O(N^{-\rho})\quad\mbox{and}\quad\| \bbS_N-\bbW_N\|_{p/2,[0,T]}=O(N^{-\rho})\quad\mbox{a.s.}
\end{equation}
where $\rho=\min(\ve,\gam/2)$.

\section{Multiple iterated sums and integrals}\label{sec6}\setcounter{equation}{0}

Let $\cD=\cD_{0T}=\{ 0=t_0<t_1<...<t_m=T\}$ be a finite partition of the interval $[0,T]$ and $\cD_{t_it_{i+1}}=\{ t_i=\tau_{i0}<\tau_{i1}<...
<\tau_{im_i}=t_{i+1}\}$ be partitions of $[t_i,t_{i+1}],\, i=0,1,...,m-1$ such that $N^{-\al}\leq\tau_{i,j+1}-\tau_{ij}<2N^{-\al}$ if $t_{i+1}
-t_i\geq 2N^{-\al}$ and if $t_{i+1}-t_i<2N^{-\al}$ then we take $m_i=1$ in which case $\cD_{t_it_{i+1}}$ consists of only one interval $[t_i,t_{i+1}]$.
Here $0<\al<1-\frac 2p$ is a small number. If $m_i>1$ then we derive by induction the following Chen type identities 
(see Theorem 2.1.2 in \cite{Lyo}, Example 3.1.1 in \cite{LQ} and Section 7.1 in \cite{Ki23}),
\begin{eqnarray*}
&\bbW_N^{(n)}(t_i,\tau_{i,r+1})=\bbW_N^{(n)}(t_i,\tau_{i,r})+\bbW_N^{(n)}(\tau_{ir},\tau_{i,r+1})\\
&+\sum_{k=1}^{n-1}\bbW^{(k)}(t_i,\tau_{ir})\otimes\bbW^{(n-k)}(\tau_{ir},\tau_{i,r+1})
\end{eqnarray*}
and
\[
\bbS_N^{(n)}(t_i,\tau_{i,r+1})=\bbS_N^{(n)}(t_i,\tau_{i,r})+\bbS_N^{(n)}(\tau_{ir},\tau_{i,r+1})+\sum_{k=1}^{n-1}
\bbS^{(k)}(t_i,\tau_{ir})\otimes\bbS^{(n-k)}(\tau_{ir},\tau_{i,r+1}).
\]
Summing these in $r=0,1,...,m_i-1$ we obtain,
\begin{equation*}
\bbW_N^{(n)}(t_i,t_{i+1})=\sum_{r=0}^{m_i-1}\bbW_N^{(n)}(\tau_{ir},\tau_{i,r+1})+\sum_{r=1}^{m_i-1}\sum_{k=1}^{n-1}
\bbW_N^{(k)}(t_i,\tau_{ir})\otimes\bbW_N^{(n-k)}(\tau_{ir},\tau_{i,r+1})
\end{equation*}
and
\begin{equation*}
\bbS_N^{(n)}(t_i,t_{i+1})=\sum_{r=0}^{m_i-1}\bbS_N^{(n)}(\tau_{ir},\tau_{i,r+1})+\sum_{r=1}^{m_i-1}\sum_{k=1}^{n-1}
\bbS_N^{(k)}(t_i,\tau_{ir})\otimes\bbS_N^{(n-k)}(\tau_{ir},\tau_{i,r+1}).
\end{equation*}
Then for $n\geq 3$,
\begin{equation}\label{6.1}
\sum_{i=0}^{m-1}\|\bbS_N^{(n)}(t_i,t_{i+1})-\bbW_N^{(n)}(t_i,t_{i+1})\|^{p/n}\leq\cI_\cD^{(1)}+\cI_\cD^{(2)}+\cI_\cD^{(3)}
\end{equation}
where
\[
\cI_\cD^{(1)}=\sum_{i=0}^{m-1}\|\sum_{j=0}^{m_i-1}\bbW_N^{(n)}(\tau_{ij},\tau_{i,j+1})\|^{p/n},\,\,
\cI_\cD^{(2)}=\sum_{i=0}^{m-1}\|\sum_{j=0}^{m_i-1}\bbS_N^{(n)}(\tau_{ij},\tau_{i,j+1})\|^{p/n}
\]
and
\begin{eqnarray*}
&\cI_\cD^{(3)}=\sum_{0\leq i<m,\, m_i>1}\|\sum_{j=1}^{m_i-1}\sum_{k=1}^{n-1}(\bbS_N^{(k)}(t_i,\tau_{ij})\otimes\bbS_N^{(n-k)}(\tau_{ij},\tau_{i,j+1})\\
&-\bbW_N^{(k)}(t_i,\tau_{ij})\otimes\bbW_N^{(n-k)}(\tau_{ij},\tau_{i,j+1})\|^{p/n}.
\end{eqnarray*}

In the same way as in Section 7.3 from \cite{Ki23} it follows that
\begin{equation}\label{6.2}
E\sup_\cD(\cI_\cD^{(1)})^{4M/p}\leq \bbC_{\bbW,\al}(M,T)N^{-2M\al(1-\al-\frac 2p)}\quad\mbox{and}
\end{equation}
\begin{equation}\label{6.3}
E\sup_\cD(\cI_\cD^{(2)})^{4M/p}\leq \bbC_{\bbS,\al}(M,T)N^{-2M\al(1-2\al-\frac 2p)},
\end{equation}
for some constants $\bbC_{\bbW,\al}(M,T)$ and $\bbC_{\bbS,\al}(M,T)$ which do not depend on $N$, and
\begin{eqnarray}\label{6.4}
&\cI_\cD^{(3)}\leq TN^\al\sum_{k=1}^{n-1}(\|\bbS_N^{(k)}\|^{p/n}_{p/k,[0,T]}\|\bbS_N^{(n-k)}-\bbW_N^{(n-k)}\|^{p/n}_{\frac p{n-k},[0,T]}\\
&+\|\bbS_N^{(k)}-\bbW_N^{(k)}\|^{p/n}_{p/k,[0,T]}\|\bbW_N^{(n-k)}\|^{p/n}_{\frac p{n-k},[0,T]}).\nonumber
\end{eqnarray}
Choosing $0<\al<1-\frac 2p$ and $M>(\al(1-\al-\frac 1p))^{-1}$ we estimate
\[
P\{ \sup_\cD(\cI_\cD^{(1)})>N^{-\frac \al 2(\frac p2(1-\al)-1)}\}\,\,\mbox{and}\,\, P\{ \sup_\cD(\cI_\cD^{(2)})>N^{-\frac \al 2(\frac p2(1-\al)-1)}\}
\]
by the Chebyshev inequality using (\ref{6.2}) and (\ref{6.3}) and then concluding by the Borel--Cantelli lemma that
\[
\sup_\cD(\cI_\cD^{(1)})+\sup_\cD(\cI_\cD^{(2)})=O(N^{-\frac \al 2(\frac p2(1-\al)-1)})\quad\mbox{a.s.}
\]
This together with (\ref{6.4}) yields that for $n\geq 3$,
\begin{eqnarray}\label{6.5}
&\|\bbS_N^{(n)}-\bbW_N^{(n)}\|_{p/n,[0,T]}^{p/n}\leq\bbC_\al^{(n)}N^{-\frac \al 2(\frac p2(1-\al)-1)}\\
&+ TN^\al\sum_{k=1}^{n-1}(\|\bbS_N^{(k)}\|^{p/n}_{p/k,[0,T]}\|\bbS_N^{(n-k)}-\bbW_N^{(n-k)}\|^{p/n}_{\frac p{n-k},[0,T]}\nonumber\\
&+\|\bbS_N^{(k)}-\bbW_N^{(k)}\|^{p/n}_{p/k,[0,T]}\|\bbW_N^{(n-k)}\|^{p/n}_{\frac p{n-k},[0,T]})\nonumber
\end{eqnarray}
where $\al>0$ was chosen above and $0<\bbC^{(n)}_{\al}<\infty$ is a random variable which does not depend on $N$.

 It was shown also in Section 7.3 of \cite{Ki23} that for $\nu=1,2,...,4M$,
\begin{equation}\label{6.6}
E\|\bbW_N^{(\nu)}\|^{4M/\nu}_{p/\nu,[0,T]}\leq T^{2M}\bbC_{\bbW}(M,\nu)<\infty
\end{equation}
and
\begin{equation}\label{6.7}
E\|\bbS_N^{(\nu)}\|^{4M/\nu}_{p/\nu,[0,T]}\leq T^{2M}\bbC_{\bbS}(M,\nu)<\infty.
\end{equation}
This together with the Chebyshev inequality and the Borel--Cantelli lemma yields that for any $\gam>\frac \nu{4M}$,
\begin{equation}\label{6.8}
\|\bbW_N^{(\nu)}\|_{p/\nu,[0,T]}=O(N^{\gam})\,\,\mbox{and}\,\,\|\bbS_N^{(\nu)}\|_{p/\nu,[0,T]}=O(N^{\gam})\,\,\mbox{a.s.}
\end{equation}

We already proved in Sections \ref{sec4} and \ref{sec5} that Theorems \ref{thm2.1}--\ref{thm2.3} hold true for $\nu=1,2$ with
some $\ve_1,\ve_2>0$ provided (\ref{2.4}) is satisfied for $M=M_0\geq 1$ large enough. Next, we proceed by induction. Suppose that
(\ref{2.4}) holds true for some $M\geq M_0$ and for $k=1,2,...,n-1,\, n\geq 3$ we have
\begin{equation}\label{6.9}
\|\bbS_N^{(k)}-\bbW_N^{(k)}\|_{p/k,[0,T]}=O(N^{-\ve_k})\quad\mbox{a.s.}
\end{equation}
where $\ve_1,\ve_2$ are as above and $\ve_k=\ve_{k-1}k^{-2}q$ for $k\geq 3$ where $q=\frac {p^2}{12}(\frac p2-1)$.
When $n\geq 3$  then by (\ref{6.5}), (\ref{6.8}) and (\ref{6.9}) with probability one,
\begin{equation}\label{6.10}
\|\bbS_N^{(n)}-\bbW_N^{(n)}\|_{p/n,[0,T]}^{p/n}= O(N^{-\frac {\al p}{2n}(\frac p2(1-\al)-1)})+O(N^{\frac {\al n}p+\gam-\ve_{n-1}}).
\end{equation}
Next, set $\gam=\frac 13\ve_{n-1}$, $\al=\min(\frac p{3n}\ve_{n-1},\,\frac 12-\frac 1p)$ and $\ve_n=\ve_{n-1}n^{-2}q$. Taking into account that
$\frac p2(1-\al)-1>\frac 12(\frac p2-1)$ with such choice of $\al$ (assuming without loss of generality that $\ve_2<1$), these choices
together with (\ref{6.10}) yield (\ref{2.10}) for $\nu=n\geq 3$ and $\ve_n=4\ve_2(n!)^{-2}q^{n-2}$. With the above choices of $\al$ and $\gam$,
in order to have (\ref{6.5}) and (\ref{6.8}) it suffices to assume that $M>\frac {9n}{\ve_{n-1}p}$, and so (\ref{2.10}) holds true for $\nu\leq\nu(M)$
provided, for instance, that $\frac 9{4\ve_2p}(\nu(M)!)^2q^{-\nu(M)}\leq M$, which completes the proofs.

\section{Law of iterated logarithm}\label{sec7}

As explained in Section \ref{subsec2.4}, in order to establish Theorem \ref{thm2.5} it suffices to prove (\ref{2.22}) and (\ref{2.23}).
\subsection{Proof of (\ref{2.22})}\label{subsec7.1}

In the discrete time case for any $t\in[0,T]$ we can write,
\begin{equation}\label{7.1}
\sup_{0\leq t\leq T}|\Sig^{(\nu)}(t\tau)-\Sig^{(\nu)}(t[\tau])|\leq\sup_{0\leq t\leq T}\sum_{t[\tau]\leq k<t\tau}|\xi(k)||\Sig^{(\nu-1)}(k)|\leq
 R_{\Sig}([\tau])
\end{equation}
where
\[
R_\Sig(N)=\max_{0\leq n\leq N}\sum_{nT\leq k<(n+1)T}|\xi(k)||\Sig^{(\nu-1)}(k)|.
\]
Then by the Cauchy--Schwarz inequality and the stationarity of the process $\xi$,
\begin{equation}\label{7.2}
E(R_\Sig(N))^{2M}\leq T^{2M-1}(E|\xi(0)|^{4M})^{1/2}\sum_{0\leq n\leq N}\sum_{nT\leq k\leq (n+1)T}(E|\Sig^{(\nu-1)}(k)|^{4M})^{1/2}.
\end{equation}

It follows from Section 7.1 of \cite{Ki23} that for any $k,l\geq 1$,
\begin{equation}\label{7.3}
E|\Sig^{(l)}(k)|^{4M}\leq C_{\Sig}(M)k^{2Ml}
\end{equation}
where $C_\Sig(M)>0$ does not depend on $k$ and $l$. Hence, by (\ref{7.2}),
\[
E(R_\Sig(N))^{2M}\leq C_R(M)N^{M(\nu-1)+1},
\]
and so
\begin{equation}\label{7.4}
P\{ R_\Sig(N)>N^{\frac 12(\nu-\frac 12)}\}\leq C_R(M)N^{-\frac M2+1}.
\end{equation}
Choosing $M>4$ we see that the right hand side of (\ref{7.4}) is a term of the converging series, and so by the Borel--Cantelli lemma
it follows that
\begin{equation}\label{7.5}
R_\Sig(N)=O(N^{\frac 12(\nu-\frac 12)})\quad\mbox{a.s.}
\end{equation}

Next, observe that by (7.20) from \cite{Ki23} for any $l\geq 1$ and $s>0$,
\begin{equation}\label{7.6}
E|\fW_\Gam^{(l)}(s)|^{2M}\leq C_\fW(M)s^{lM}.
\end{equation}
Now we write,
\begin{eqnarray}\label{7.7}
&\sup_{0\leq t\leq T}|\fW_\Gam^{(\nu)}(t\tau)-\fW^{(\nu)}(t[\tau])|\leq\sup_{0\leq t\leq T}|\int_{t\tau}^{t[\tau]}\fW_\Gam^{(\nu-1)}(s)\otimes d\cW(s)|\\
&+\sup_{0\leq t\leq T}|\int_{t\tau}^{t[\tau]}\fW_\Gam^{(\nu-2)}(s)\otimes\Gam ds|\leq R_\fW^{(1)}([\tau])+R^{(2)}_\fW([\tau])\nonumber
\end{eqnarray}
where
\[
R^{(1)}_\fW(k)=2\max_{0\leq n\leq k}\sup_{nT\leq t<(n+2)T}|\int_{nT}^{t}\fW_\Gam^{(\nu-1)}(s)\otimes d\cW(s)|
\]
\[
\mbox{and}\,\,\, R^{(2)}_\fW(k)=2\max_{0\leq n\leq k}\sup_{nT\leq t<(n+2)T}|\int_{nT}^{t}\fW_\Gam^{(\nu-2)}(s)\otimes\Gam ds|
\]
taking into account that for $nT\leq s<t<(n+2)T$,
\[
|\int_s^tadb|\leq |\int_{nT}^sadb|+|\int_{nT}^tadb|
\]
no matter what kind of integral we deal with. Now by (\ref{7.6}) and the standard martingale moment estimates for stochastic integrals
we obtain
\begin{eqnarray*}
&E|R^{(1)}_\fW(N)|^{2M}\leq 2^{2M}\sum_{0\leq n\leq N}E\sup_{nT\leq t<(n+2)T}|\int_{nT}^t\fW^{(\nu-1)}(s)\otimes d\cW(s)|^{2M}\\
&\leq C^{(1)}_{R_\fW}(M)\sum_{0\leq n\leq N}\int_{nT}^{(n+2)T}E|\fW^{(\nu-1)}(s)|^{2M}ds\leq \tilde C^{(1)}_{R_\fW}(M)N^{(\nu-1)M+1}
\end{eqnarray*}
for some $C^{(1)}_{R_\fW}(M),\,\tilde C^{(1)}_{R_\fW}(M)>0$ which do not depend on $N$. Estimating the probability
$P\{ R^{(1)}_\fW(N)>N^{\frac 12(\nu-\frac 12)}\}$ in the same way as in (\ref{7.4}) we conclude that
\begin{equation}\label{7.8}
R^{(1)}_\fW(N)=O(N^{\frac 12(\nu-\frac 12)})\quad\mbox{a.s.}
\end{equation}

Next, by (\ref{7.6}) and the Cauchy--Schwarz inequality,
\begin{eqnarray*}
&E|R_N^{(2)}(N)|^{2M}\leq 2^{2M}\sum_{0\leq n\leq N}E(\int_{nT}^{(n+2)T}|\fW_\Gam^{(\nu-2)}(s)\otimes\Gam|ds)^{2M}\\
&\leq 2^{4M+1}T^{2M-1}\sum_{0\leq n\leq N}\int_{nT}^{(n+2)T}E|\fW_\Gam^{(\nu-2)}(s)\otimes\Gam|^{2M})ds\leq C_{R_\fW}(M)N^{(\nu-2)M+1}.
\end{eqnarray*}
Proceeding as above we obtain by the Borel--Cantelli lemma that
\begin{equation}\label{7.9}
R^{(2)}_\fW(N)=O(N^{\frac 12(\nu-\frac 12)})\quad\mbox{a.s.}
\end{equation}
which together with (\ref{7.1}), (\ref{7.5}), (\ref{7.7}) and (\ref{7.8}) yields (\ref{2.22}).

\subsection{Proof of (\ref{2.23})}\label{subsec7.2}

First, we write
\begin{eqnarray}\label{7.10}
&\sup_{0\leq t\leq T}|\fW_\Gam^{(\nu)}(t\tau)-\fW^{(\nu)}(t\tau)|\leq\sup_{0\leq t\leq T}|\int_0^{t\tau}(\fW_\Gam^{(\nu-1)}(s)\\
&-\fW^{(\nu-1)}(s))d\cW(s)|+\sup_{0\leq t\leq T}|\int_0^{t\tau}(\fW_\Gam^{(\nu-2)}(s)\otimes\Gam ds|.
\nonumber\end{eqnarray}
We will estimate the right hand side of (\ref{7.10}) by induction. For $\nu=1$,
\begin{eqnarray*}
&\sup_{0\leq t\leq T}|\fW_\Gam^{(1)}(t\tau)-\fW^{(1)}(t\tau)|=0\quad\mbox{and}\\
&\sup_{0\leq t\leq T}|\fW_\Gam^{(1)}(t\tau)|=\sup_{0\leq t\leq T}|\fW^{(1)}(t\tau)|=O(\sqrt {\tau\ln\ln\tau})\quad\mbox{a.s.}
\end{eqnarray*}
by the standard (Strassen's) law of iterated logarithm. For $\nu=2$ we have
\begin{eqnarray*}
&\sup_{0\leq t\leq T}|\fW_\Gam^{(2)}(t\tau)-\fW^{(2)}(t\tau)|=T\tau|\Gam|=O(\tau)\quad\mbox{and}\\
&\sup_{0\leq t\leq T}|\fW_\Gam^{(2)}(t\tau)|\leq\sup_{0\leq t\leq T}|\fW^{(2)}(t\tau)|+T\tau|\Gam|=O(\tau\ln\ln\tau)\quad\mbox{a.s.}
\end{eqnarray*}
by the law of iterated logarithm for iterated stochastic integrals from \cite{Bal}. Suppose that
\begin{equation}\label{7.11}
\sup_{0\leq t\leq T}|\fW_\Gam^{(\nu)}(t\tau)-\fW^{(\nu)}(t\tau)|=O(\tau^{\nu/2}(\ln\ln\tau)^{\frac \nu 2-1})\quad\mbox{a.s.}
\end{equation}
\begin{equation}\label{7.12}
\mbox{and}\quad\sup_{0\leq t\leq T}|\fW_\Gam^{(\nu)}(t\tau)|=O((\tau\ln\ln\tau)^{\nu/2})\quad\mbox{a.s.}
\end{equation}
for $\nu=1,2,...,n-1,\, n\geq 3$ and prove (\ref{7.11}) and (\ref{7.12}) for $\nu=n$.

By the recurrence formula for $\fW_\Gam^{(n)}$ we have that
\begin{equation}\label{7.13}
|\fW_\Gam^{(n)}(t\tau)|\leq|\int_0^{t\tau}\fW_\Gam^{(n-1)}(s)\otimes d\cW(s)|+\int_0^{t\tau}|\fW_\Gam^{n-2}(s)||\Gam|ds.
\end{equation}
By (\ref{7.12}) for $\nu=n-2$,
\begin{equation}\label{7.14}
\sup_{0\leq t\leq T}\int_0^{t\tau}|\fW_\Gam^{(n-2)}(t\tau)||\Gam|ds=O(\tau^{n/2}(\ln\ln\tau)^{\frac n2-1})\quad\mbox{a.s.}
\end{equation}
Next, observe that by (\ref{7.12}) for $\nu=n-1$ we have also an estimate of the quadratic variation of the stochastic integral in
the right hand side of (\ref{7.13}) that has the form
\begin{equation}\label{7.15}
\langle\int_0^{t\tau}\fW_\Gam^{(n-1)}(s)\otimes d\cW(s)\rangle=O(\int_0^{T\tau}|\fW_\Gam^{(n-1)}(s)|^2ds)=O(\tau^n(\ln\ln\tau)^{n-1})\quad\mbox{a.s.}
\end{equation}
By the law of iterated logarithm for stochastic integrals (see, for instance, \cite{Wan}) we obtain from (\ref{7.15}) that
\[
\sup_{0\leq t\leq T}|\int_0^{t\tau}\fW_\Gam^{(n-1)}(s)\otimes d\cW(s)|=O((\tau\ln\ln\tau)^{n/2})\quad\mbox{a.s.}
\]
which together with (\ref{7.14}) proves (\ref{7.12}) for all $\nu\geq 1$.

Next, by (\ref{7.11}) for $\nu=n-1$,
\begin{eqnarray}\label{7.16}
&\langle\int_0^{t\tau}(\fW_\Gam^{(n-1)}(s)-\fW^{(n-1)}(s))\otimes d\cW(s)\rangle\\
&=O(\int_0^{T\tau}|\fW_\Gam^{(n-1)}(s)-\fW^{(n-1)}(s)|^2ds)=O(\tau^n(\ln\ln\tau)^{n-3})\quad\mbox{a.s.}\nonumber
\end{eqnarray}
This together with the law of iterated logarithm for stochastic integrals (see \cite{Wan} and references there) yields that
\begin{equation}\label{7.17}
\sup_{0\leq t\leq T}|\int_0^{t\tau}(\fW_\Gam^{(n-1)}(s)-\fW^{(n-1)}(s))\otimes d\cW(s)|=O(\tau^{n/2}(\ln\ln\tau)^{\frac n2-1})\quad\mbox{a.s.}
\end{equation}
Now (\ref{7.11}) for $\nu=n$ follows from (\ref{7.10}), (\ref{7.14} and (\ref{7.17}) completing the induction step. Thus, (\ref{7.11}) holds
true for all $\nu\geq 1$ implying (\ref{2.23}) which together with (\ref{2.22}), (\ref{2.24}) and Proposition \ref{prop2.4} yields Theorem \ref{thm2.5}.

\section{A.s. central limit theorem: proofs.}\label{sec8}

As in \cite{LP} denote by $BL=BL(\cH_T(\bbR^{d\otimes\nu}), \|\cdot\|_{BL})$ the class of functions $f:\cH_T(\bbR^{d\otimes\nu})\to\bbR$ with $\| f\|_{BL}=\| f\|_L+\| f\|_\infty<\infty$. Here $\| f\|_\infty$ is
the supremum norm and
\[
\| f\|_L=\sup\{\frac {| f(x)-f(y)|}{|\| x-y|\|_\infty}:\, x,y\in\cH_T(\bbR^{d\otimes\nu}),\, x\ne y\}
\]
where $|\| z|\|_\infty=\sup_{t\in[0,T]}\sum_{1\leq i_1,...,i_d\le d}|z^{i_1,...,i_\nu}(t)|$ when $z(t)\in\bbR^{d\otimes\nu},\, t\in[0,T]$.
It is easy to see that the space $\cH_T(\bbR^{d\otimes\nu})$ is separable, and so similarly to \cite{LP} we can rely on arguments from \cite{Du}, Theorem 11.3.3, $b\Rightarrow c$ to conclude that the assertion of Theorem \ref{thm2.8}
will follow if for all $f$ from a certain countable set $\Phi\subset\cH_T(\bbR^{d\otimes\nu})$ and $P$-almost all $\om$,
\begin{equation}\label{8.1}
\lim_{N\to\infty}(\log N)^{-1}\sum_{1\le n\le N}n^{-1}f(\bbS_n^{(\nu)}(\cdot,\om))=Ef(\bbW_1^{(\nu)}(\cdot)).
\end{equation}
Actually, we will prove (\ref{8.1}) for any $f\in\cH_T(\bbR^{d\otimes\nu})$.
Observe that the distribution of $\bbW_1^{(\nu)}$ does not depend on a particular choice of the Brownian motion 
(with the covariance matrix $\zeta$) $\cW=\bbW_1^{(1)}$ by which $\bbW_1^{(\nu)}$ is constructed, and so the right hand side of (\ref{8.1}) does not depend on a choice of $\cW$, as well.

By (\ref{2.8}) for each $f\in BL$ and $P$-almost all $\om$,
\begin{eqnarray*}
&|f(\bbS_n^{(\nu)}(\cdot,\om))-f(\bbW_n^{(\nu)}(\cdot,\om))|\\
&\leq\| f\|_L|\|\bbS_n^{(\nu)}(\cdot,\om))-\bbW_n^{(\nu)}(\cdot,\om)|\|_\infty=O(n^{-\ve_\nu}).
\end{eqnarray*}
Hence, (\ref{8.1}) will follow if we show that for $P$- almost all $\om$,
\begin{equation}\label{8.2}
\lim_{N\to\infty}(\log N)^{-1}\sum_{1\le n\le N}n^{-1}f(\bbW_n^{(\nu)}(\cdot,\om))=Ef(\bbW_1^{(\nu)}(\cdot))
\end{equation}
where $f\in BL$ is arbitrary.

In order to obtain (\ref{8.2}) it suffices to show that for any $f\in BL$,
\begin{equation}\label{8.3}
\lim_{N\to\infty}(\log N)^{-1}\sum_{1\le n\le N}n^{-1}Ef(\bbW_n^{(\nu)}(\cdot,\om))=Ef(\bbW_1^{(\nu)}(\cdot))
\end{equation}
and that
\begin{equation}\label{8.4}
\lim_{N\to\infty}(\log N)^{-1}\sum_{1\le n\le N}n^{-1}\up_n^{(\nu)}=0\,\,\, P-\mbox{a.s.}
\end{equation}
where $\up_n^{(\nu)}=\up_n^{(\nu)}(\om)=f(\bbW_n^{(\nu)}(\cdot,\om))-Ef(\bbW_n^{(\nu)}(\cdot))$.

We will see that for any integer $n\geq 1$ the processes $\bbW^{(\nu)}_n$ and $\bbW_1^{(\nu)}$ have the same distribution, and so, in particular, for any $n\geq 1$,
\begin{equation}\label{8.5}
Ef(\bbW_n^{(\nu)}(\cdot))=Ef(\bbW_1^{(\nu)}(\cdot)).
\end{equation}
This yields (\ref{8.3}) since
\[
\lim_{N\to\infty}(\log N)^{-1}\sum_{1\leq n\leq N}n^{-1}=1.
\]

In order to show that $\bbW^{(\nu)}_n$ and $\bbW_1^{(\nu)}$ have the same distribution we will consider for each integer $m\geq 1$ the partition of $[0,T]$ setting $t_k=T\frac kn,\, k=0,1,...,m$ and the map $\Psi_m^{(\nu)}$ of the space
$\cC([0,T],\,\bbR^m)$ of continuous paths $\gam(t)\in\bbR^m,\, t\in[0,T]$ defined inductively in the following way
$(\Psi^{(1)}_m\gam)(t)=\gam(t_k)$ if $t_k\leq t<t_{k+1}$,
\[
(\Psi^{(2)}_m\gam)(t)=\sum_{1\leq k\leq \frac {mt}T}\gam(t_{k-1})\otimes(\gam(t_k)-\gam(t_{k-1}))+T\Gam
\]
and for $\nu>2$,
\begin{eqnarray*}
&(\Psi^{(\nu)}_m\gam)(t)=\sum_{1\leq k\leq \frac {mt}T}\big((\Psi_m^{(\nu-1)}\gam)(t_{k-1})\otimes(\gam(t_k)-\gam(t_{k-1}))\\
&+(\Psi_m^{(\nu-2)}\gam)(t_{k-1})\otimes\Gam(t_k-t_{k-1})\big).
\end{eqnarray*}
Now observe that $\Psi_m^{(\nu)}W_n$ and $\Psi_m^{(\nu)}\cW$ have the same distribution since both $\cW$ and $W_n(t)=
n^{-1/2}\cW(nt),\, t\in[0,T]$ are Brownian motions (with the same covariation matrix), i.e. they have the same distribution.
It follows also by the definitions of the It\^ o and Riemann integrals that the distributions of $\Psi_m^{(\nu)}W_n$ and $\Psi_m^{(\nu)}\cW$ converge weakly as $m\to\infty$ to the distributions of $\bbW_n^{(\nu)}$ and $\bbW_1^{(\nu)}$, respectively, and so the latter processes have the same distribution.

It remains to establish (\ref{8.4}) for which, similarly to \cite{LP}, we will need the following result.
\begin{lemma}\label{lem8.1}
For each $f\in BL$ and $\up_k^{(\nu)}=f(\bbW_k^{(\nu)}(\cdot))-Ef(\bbW_k^{(\nu)}(\cdot))$,
\begin{equation}\label{8.6}
|E(\up_j^{(\nu)}\up_k^{(\nu)})|\leq C(j/k)^{1/2}\quad\,\mbox{for any}\quad 1\leq j\leq k
\end{equation}
where the constant $C>0$ depends only on $\| f\|_{BL}$. $d$, $T$ and $\nu$.
\end{lemma}
\begin{proof}
Set
\begin{eqnarray*}
R_{j,k}^{(\nu)}(t)=
\left\{
\begin{array}{ll}
0 & \mbox{if}\,\,\, 0\leq t\leq Tj/k\\
\bbW_k^{(\nu)}(Tj/k, t) & \mbox{if}\,\,\, Tj/k\leq t\leq T.
\end{array}
\right.
\end{eqnarray*}
It was shown in Section 7.1 of \cite{Ki23} (and is easy to see by induction) that iterated stochastic integrals $\bbW_k^{(\nu)}$ satisfy the Chen identities so that for $Tj/k\leq t\leq T$,
\begin{eqnarray}\label{8.7}
&\bbW_k^{(\nu)}(t)=\bbW_k^{(\nu)}(0,t)=\bbW_k^{(\nu)}(0,Tj/k)+\bbW_k^{(\nu)}(Tj/k,t)\\
&+\sum_{m=1}^{\nu-1}\bbW_k^{(m)}(0,Tj/k)\otimes\bbW_k^{(\nu-m)}(Tj/k,t).\nonumber
\end{eqnarray}

Set $\varrho_k^{(\nu)}=f(R^{(\nu)}_{j,k}(\cdot))-Ef(R^{(\nu)}_{j,k}(\cdot)).$ It is easy to see that $R_{j,k}^{(\nu)}$ is independent of $\up_j^{(\nu)}$, and so
\[
E(\up_j^{(\nu)}\up_k^{(\nu)})=E(\up_j^{(\nu)}(\up_k^{(\nu)}-\varrho_k^{(\nu)})).
\]
Hence, by the Cauchy--Schwarz inequality
\begin{eqnarray}\label{8.8}
&|E(\up_j^{(\nu)}\up_k^{(\nu)})|\leq (E(\up_j^{(\nu)})^2)^{1/2}(E(\up_k^{(\nu)}-\varrho_k^{(\nu)})^2)^{1/2}\\
&\leq 4\| f\|^2_{BL}(E|\|\bbW_k^{(\nu)}-R^{(\nu)}_{j,k}|\|^2_\infty)^{1/2}.\nonumber
\end{eqnarray}

To estimate the right hand side of (\ref{8.8}) we write
\begin{eqnarray}\label{8.9}
&\,\,\,\,\,\, E\|\bbW_k^{(\nu)}-R^{(\nu)}_{j,k}\|^2_\infty=E\sup_{t\in[0,T]}(\sum_{1\leq i_1,...,i_\nu\le d}|\bbW_k^{i_1,...,i_\nu}(t)-R_{j,k}^{i_1,...,i_\nu}(t)|)^2\\
&\leq 2d^\nu\sum_{1\le i_1,...,i_\nu\leq d}\big(E\sup_{0\leq t\le Tj/k}|\bbW_k^{i_1,...,i_\nu}(t)|^2\nonumber\\
&+E\sup_{Tj/k\le t\leq T}|\bbW_k^{i_1,...,i_\nu}(t)-\bbW_k^{i_1,...,i_\nu}(Tj/k,t)|^2\big).\nonumber
\end{eqnarray}
Now, by (\ref{8.7}) together with the Cauchy--Schwarz inequality and (\ref{6.6}) taken with appropriate $M$ and $T$
(i.e. replacing in (\ref{6.6}) $M$ by $m$ and $T$ by $Tj/k$ when estimating $E|\bbW_k^{i_1,...,i_m}(Tj/k)|^4$ below),
\begin{eqnarray}\label{8.10}
&\quad E\sup_{Tj/k\leq t\leq T}|\bbW_k^{i_1,...,i_\nu}(t)-\bbW_k^{i_1,...,i_\nu}(Tj/k,t)|^2\leq 2E|\bbW_k^{i_1,...,i_\nu}(Tj/k)|^2\\
&+2\nu\sum_{m=1}^{\nu-1}(E|\bbW_k^{i_1,...,i_m}(Tj/k)|^4)^{1/2}(E\sup_{Tj/k\leq t\leq T}|\bbW_k^{i_1,...,i_\nu}
(Tj/k,t)|^4)^{1/2}\nonumber\\
&\leq\tilde CTj/k\nonumber
\end{eqnarray}
Finally, combining (\ref{8.8})--(\ref{8.10}) we derive (\ref{8.6}).
Observe that we need here only supremum norm estimates which are simpler than the variational norm estimate (\ref{6.6}) as
they follow from standard moment estimates for stochastic integrals (see, for instance, \cite{Mao}).
\end{proof}   

We conclude the proof of (\ref{8.4}) in the same way as in \cite{LP}. Namely, set
\[
Z_l=\sum_{4^{l-1}\leq j<4^l}j^{-1}\up_j^{(\nu)},\quad l=1,2,....
\]
Then (\ref{8.4}) will follow if we show that
\begin{equation}\label{8.11}
\lim_{n\to\infty}\frac 1n\sum_{1\leq l\leq n}Z_l=0\quad P-\mbox{a.s.}
\end{equation}
Observe that by (\ref{8.6}) for all $1\leq l<m$,
\[
|E(Z_lZ_m)|=O(2^{l-m}). 
\]
Now (\ref{8.11}) follows from the strong law of large numbers for quasi-stationary sequences of random variables (see, for instance, Theorem 5.1.2 and its proof in \cite{Chu} or Corollary 2  in \cite{CLS}). This completes the proof of Theorem \ref{thm2.8}.


\end{document}